\newcommand{\define}{\textbf}
\newcommand{\excise}[1]{}
\newcommand{\isom}{\cong}
\renewcommand{\setminus}{\smallsetminus}
\renewcommand{\phi}{\varphi}
\renewcommand{\tilde}{\widetilde}
\renewcommand{\bar}{\overline}
\renewcommand{\AA}{\mathbb{A}}
\newcommand{\CC}{\mathbb{C}}
\newcommand{\GG}{\mathbb{G}}
\newcommand{\PP}{\mathbb{P}}
\newcommand{\QQ}{\mathbb{Q}}
\newcommand{\RR}{\mathbb{R}}
\newcommand{\ZZ}{\mathbb{Z}}
\newcommand{\shfE}{\mathscr{E}}
\newcommand{\shfF}{\mathscr{F}}
\newcommand{\shfP}{\mathscr{P}}
\newcommand{\shfQ}{\mathscr{Q}}
\newcommand{\catA}{\mathcal{A}}
\newcommand{\catCoh}{\mathbf{Coh}}
\newcommand{\KK}{\mathcal{K}}
\newcommand{\rR}{\mathcal{R}}
\newcommand{\pp}{\mathfrak{p}}
\newcommand{\ee}{\mathrm{e}}  
\newcommand{\kk}{k}    
\newcommand{\OO}{\mathcal{O}}
\newcommand{\id}{\mathrm{id}}
\newcommand{\pt}{\mathrm{pt}}
\newcommand{\opk}{\mathrm{op}K}
\newcommand{\scat}{{\mathbf\Delta}}
\newcommand{\bn}{\mathbf{n}}
\newcommand{\bm}{\mathbf{m}}
\newcommand{\sk}{\mathrm{sk}}
\newcommand{\cosk}{\mathrm{cosk}}
\newcommand{\tor}{T\hspace{-.6ex}or}
\DeclareMathOperator{\Span}{span}
\DeclareMathOperator{\Hom}{Hom}
\DeclareMathOperator{\Proj}{Proj}
\DeclareMathOperator{\Spec}{Spec}
\DeclareMathOperator{\Star}{Star}
\DeclareMathOperator{\Tor}{Tor}
\DeclareMathOperator{\PExp}{PExp}
\DeclareMathOperator{\Exp}{Exp}
\DeclareMathOperator{\Pic}{Pic}
\newtheorem{theorem}{Theorem}[section]
\newtheorem{lemma}[theorem]{Lemma}
\newtheorem{proposition}[theorem]{Proposition}
\newtheorem{corollary}[theorem]{Corollary}
\theoremstyle{definition}
\newtheorem{definition}[theorem]{Definition}
\newtheorem{remark}[theorem]{Remark}
\newtheorem{example}[theorem]{Example}
\newtheorem*{notation}{Notation}
\begin{document}

\title{Operational $K$-theory}

\author{Dave Anderson}
\address{Department of Mathematics\\The Ohio State University\\Columbus, OH 43210}
\email{anderson.2804@math.osu.edu}

\author{Sam Payne}
\address{Department of Mathematics\\Yale University\\New Haven, CT 06511}
\email{sam.payne@yale.edu}
\keywords{}
\date{March 9, 2015}
\thanks{DA partially supported by NSF Grant DMS-0902967, the Fondation Sciences Math\'ematiques de Paris (FSMP) (reference: ANR-10-LABX-0098), and a postdoctoral fellowship from the Instituto Nacional de Matem\'atica Pura e Aplicada (IMPA).  SP partially supported by NSF DMS-1068689 and NSF CAREER DMS-1149054.}

\begin{abstract}
We study the operational bivariant theory associated to the covariant theory of Grothendieck groups of coherent sheaves, and prove that it has many geometric properties analogous to those of operational Chow theory.  This operational $K$-theory agrees with Grothendieck groups of vector bundles on smooth varieties, admits a natural map from the Grothen\-dieck group of perfect complexes on general varieties, satisfies descent for Chow envelopes, and is $\AA^1$-homotopy invariant.

Furthermore, we show that the operational $K$-theory of a complete linear variety is dual to the Grothendieck group of coherent sheaves.  As an application, we show that the $K$-theory of perfect complexes on any complete toric threefold surjects onto this group.  Finally we identify the equivariant operational $K$-theory of an arbitrary toric variety with the ring of integral piecewise exponential functions on the associated fan.
\end{abstract}

\maketitle

\setcounter{tocdepth}{1}
\tableofcontents

\section{Introduction}

The Grothendieck groups of vector bundles $K^\circ(X)$ and of coherent sheaves $K_\circ(X)$ are important invariants of a quasi-projective scheme $X$, and each plays a central role in one of the classical formulations of Riemann-Roch theorems. The functor $K_\circ$ is covariant for proper maps, and $K^\circ$ is contravariant for arbitrary maps.\footnote{We adopt the convention, standard in intersection theory but not in $K$-theory, of using superscripts for naturally contravariant functors, and subscripts for covariant functors.}  The two are related by a natural homomorphism $K^\circ(X) \to K_\circ(X)$, which is an isomorphism whenever $X$ is smooth.

As part of a program to unify and strengthen several variants of the Riemann-Roch theorem, Fulton and MacPherson introduced the notion of a \emph{bivariant theory}, which associates a group to each morphism of quasiprojective schemes $X \to Y$, and is equipped various natural operations \cite{bt}.  Their bivariant group
\[
  K^\circ(X \xrightarrow{f} Y)
\]
is the Grothendieck group of $f$-perfect complexes on $X$.\footnote{For a morphism $f$ of quasiprojective schemes, an $f$-perfect complex is a complex of coherent sheaves $\shfP_\bullet$ with the property that, when $f$ is factored as
\[
  X \xrightarrow{\iota} M \xrightarrow{p} Y,
\]
with $\iota$ a closed embedding and $p$ a smooth morphism, $\iota_*\shfP_\bullet$ can be resolved by a finite complex of vector bundles on $M$.}  These groups encompass both the covariant functor
\[
  K_\circ(X) = K^\circ(X \to \pt)
\]
and the contravariant functor
\[
  K^\circ(X) = K^\circ(X \xrightarrow{\id} X),
\]
but possess a great deal more structure, allowing for simplified proofs of the Riemann-Roch theorems.  The corresponding Riemann-Roch theorems were later extended to remove the quasi-projective hypothesis in \cite{fulton-gillet}.

As a bivariant algebraic $K$-theory, however, Grothendieck groups of $f$-perfect complexes are somewhat less than one should hope for.\footnote{A ``bivariant algebraic $K$-theory'' was also defined by Kassel \cite{kassel} (see also \cite{cuntz}, \cite{ct}, and \cite[Ex.~II.2.14]{weibel}).  This is distinct from the bivariant theory studied by Fulton and MacPherson, as it depends on a pair of algebras, but does not involve a map between them.  However, it does include product operations, as well as $K^\circ$ and $K_\circ$ as special cases.}  
The independent squares in this theory---those commuting squares for which one can define a pullback $K^\circ(X \to Y) \to K^\circ(X' \to Y')$---are only the $\Tor$-independent squares, because there is no obvious pullback of an $f$-perfect complex through an arbitrary fiber square \cite[Section~10.8]{bt}.  Furthermore, the contravariant groups $K^\circ(X)$ are difficult to compute on singular spaces.  Even on spaces with mild singularities, such as simplicial projective toric varieties, Grothendieck rings of vector bundles (or perfect complexes) can be uncountable \cite{gubeladze}, and are not $\AA^1$-homotopy invariant \cite{chww}.  Further complications arise on singular spaces that are not $\QQ$-factorial or quasi-projective, where coherent sheaves are not known to have resolutions by vector bundles.  On such spaces, it is not known whether Grothendieck rings of vector bundles and perfect complexes agree \cite{resolution}.  For instance, there are complete, singular, nonprojective toric threefolds, such as \cite[Example~4.13]{vbs}, that have uncountable Grothendieck rings of perfect complexes \cite{gharib-karu}, but are not known to have any nontrivial vector bundles at all.

In this paper, we study basic geometric properties of the \emph{operational bivariant $K$-theory} associated to the covariant theory of Grothendieck groups of coherent sheaves.  Given any covariant homology-like theory, such as $K_\circ$, there is a general construction outlined in \cite[\S8]{bt} of an operational bivariant theory.\footnote{The relation between the covariant and contravariant parts of this operational theory are loosely analogous to the relationship between (contravariant) differential forms and (covariant) currents in differential geometry.}  Roughly speaking, an element of $\opk^\circ(X \rightarrow Y)$ is a collection of operators $K_\circ(Y') \rightarrow K_\circ(X')$, indexed by fiber squares
\[
\begin{diagram}
 X' & \rTo & Y' \\
 \dTo &  & \dTo \\
 X  & \rTo^f   & Y,
\end{diagram}  
\]
that commute with proper pushforward, flat pullback, and Gysin maps for regular embeddings.  A precise definition is given in Section~\ref{s:opk}.  An $f$-perfect complex $\shfP_\bullet$ determines a natural collection of operators, given by
\[
[\shfF] \mapsto \sum_i (-1)^i [\tor_i^Y(\shfP_\bullet,\shfF)], 
\]
for a coherent sheaf $\shfF$ on $Y'$.  See, for instance, \cite[Ex.~18.3.16]{it} or \cite[IV, 2.12]{sga6}.  As explained in Section~\ref{s:gysin}, at least in the case when $f$ is a closed embedding, these operators commute with proper pushforward, flat pullback, and Gysin maps for regular embeddings, giving a natural map from $K^\circ(X \rightarrow Y)$ to $\opk^\circ(X \rightarrow Y)$.

Advantages of passing to the operational theory include the ability to work with arbitrary fiber squares as independent squares, and computability on relatively simple spaces, such as toric varieties.  In future work, we intend to address Grothendieck transformations and Riemann-Roch theorems in this operational framework.

Our general results are given in the setting of separated schemes of finite type over a fixed field.  To describe them more precisely, let $\opk^\circ(X)$ denote the contravariant part of operational $K$-theory, which is the associative ring of operators $\opk^\circ(X \xrightarrow{\id} X)$ corresponding to the identity map on $X$.  Although $\opk^\circ(X)$ has no obvious presentation in terms of generators and relations, we show that it has desirable geometric properties and is computable in many cases of interest.  Furthermore, since the identity is a closed embedding, the construction discussed above gives a natural map from the Grothendieck ring $K^\circ(X)$ of perfect complexes on $X$ to $\opk^\circ(X)$.

\begin{theorem}\label{t:intro-homotopy}
For any scheme $X$, the natural pull back map from $\opk^\circ(X)$ to $\opk^\circ(X \times \AA^1)$ is an isomorphism.
\end{theorem}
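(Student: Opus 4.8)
The plan is to realize the zero section $i_0\colon X\hookrightarrow X\times\AA^1$ as a two-sided inverse to the pull back $p^*\colon\opk^\circ(X)\to\opk^\circ(X\times\AA^1)$ induced by the projection $p\colon X\times\AA^1\to X$. Since $i_0$ is the Cartier divisor $\{t=0\}$, it is a regular embedding, so $i_0^*$ is a legitimate operation on operational classes; and since $p\circ i_0=\id_X$, functoriality of pull back in a bivariant theory gives $i_0^*\circ p^*=\id$ on $\opk^\circ(X)$ for free. Thus the entire content of the theorem is the identity $p^*\circ i_0^*=\id$ on $\opk^\circ(X\times\AA^1)$; together with the previous identity this exhibits $p^*$ and $i_0^*$ as mutually inverse.

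Fix $c\in\opk^\circ(X\times\AA^1)$, and for a morphism $g\colon W\to X\times\AA^1$ write $c_g\colon K_\circ(W)\to K_\circ(W)$ for the operator that $c$ assigns to $g$. Chasing the definition of pull back on operational classes, the operator that $p^*i_0^*c$ assigns to $g$ is $c$ evaluated on $W$, but with $W$ regarded as living over $X\times\AA^1$ via the \emph{retracted} map $r\circ g$, where $r=i_0\circ p\colon X\times\AA^1\to X\times\AA^1$, $(x,t)\mapsto(x,0)$; that is, $(p^*i_0^*c)_g=c_{r\circ g}$. So the theorem is equivalent to the statement that an operational class on $X\times\AA^1$ does not see the difference between the structure map $g$ and its retraction $r\circ g$.

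To prove this, use the naive homotopy: let $H\colon W\times\AA^1\to X\times\AA^1$ be the morphism which, on points, sends $(w,s)$ to $(g_X(w),\,s\cdot g_t(w))$, where $g=(g_X,g_t)$. Then $H\circ j_1=g$ and $H\circ j_0=r\circ g$ for the two sections $j_0,j_1\colon W\hookrightarrow W\times\AA^1$, each of which is a regular embedding (the Cartier divisor $\{s=0\}$, resp.\ $\{s=1\}$). Compatibility of the operational class $c$ with Gysin maps for regular embeddings—applied to $j_s$ over $X\times\AA^1$ via $H$—gives, for $s=0$ and $s=1$, the identity $j_s^!\circ c_H=c_{H\circ j_s}\circ j_s^!$ of maps $K_\circ(W\times\AA^1)\to K_\circ(W)$, where $j_s^!$ is the $K$-theoretic Gysin pull back along $j_s$. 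Now invoke the homotopy invariance of $K_\circ$ (the classical homotopy property of $G$-theory): the flat pull back $\pi^*\colon K_\circ(W)\to K_\circ(W\times\AA^1)$ along $\pi\colon W\times\AA^1\to W$ is an isomorphism, and because each $j_s$ is a section of $\pi$ (and $\pi^*\shfF$ is flat over $\AA^1$) one has $j_s^!\circ\pi^*=\id$; hence $j_0^!=j_1^!=(\pi^*)^{-1}$. Substituting $H\circ j_1=g$ and $H\circ j_0=r\circ g$ into the two Gysin identities and cancelling the isomorphism $(\pi^*)^{-1}$ yields $c_g=c_{r\circ g}$, which is exactly what was needed.

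The parts that require care but no new ideas are the bookkeeping in the second paragraph—the precise way the bivariant pull backs $p^*$ and $i_0^*$ act on collections of operators, which is the main point to get right—and the verification that the sections $j_s$ are covered by the Gysin-compatibility axiom, which they are, being regular closed embeddings. The one substantial external ingredient is the homotopy invariance of $G$-theory, which is standard; everything else is formal manipulation in the bivariant framework. In spirit this is the $K$-theoretic counterpart of the proof that operational Chow cohomology is $\AA^1$-invariant, with the homotopy property of Chow groups replaced by that of $K_\circ$.
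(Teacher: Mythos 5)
Your proof is correct, and it is a close cousin of the paper's argument rather than an identical one, so a brief comparison is in order. The paper proves the more general Theorem~\ref{t:homotopy} (equivariant, for an arbitrary affine bundle $\pi\colon E\to Y$ and arbitrary $\opk^\circ_T(X\to Y)$) by writing down the inverse directly: to $c$ on the bundle side and $g\colon Y'\to Y$ it assigns the operator $c_{\tilde g}$ for the induced map $\tilde g\colon Y'\times_Y E\to E$, conjugated by the flat-pullback isomorphisms $K_\circ(Y')\cong K_\circ(Y'\times_Y E)$ coming from homotopy invariance of $G$-theory; the verification that this is a two-sided inverse (which the paper leaves implicit) again uses axiom (A\ref{axiom:gysin}) for a section of the bundle $E'\times_Y E\to E'$, exactly parallel to your $j_s^!\circ\pi^*=\id$ step. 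You instead take the inverse to be the bivariant pullback $i_0^*$ along the zero section and reduce everything to the identity $c_g=c_{r\circ g}$, proved via the explicit contraction $H(w,s)=(g_X(w),s\cdot g_t(w))$ and the two sections $j_0,j_1$. The two proofs rest on the same two ingredients — homotopy invariance of $K_\circ$ and compatibility of operational classes with Gysin maps for sections — and your bookkeeping with (O1), (A\ref{axiom:gysin}), and $j_s^!=(\pi^*)^{-1}$ is all correct. What your route buys is an explicit, self-contained verification for the statement as given; what it gives up is generality: a nontrivial affine bundle has no global section and admits no such contraction $H$, so your argument does not extend verbatim to Theorem~\ref{t:homotopy}, whereas the paper's does. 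One small remark: you do not need $i_0$ to be a regular embedding for $i_0^*$ to act on operational classes — the pullback (O1) is defined for arbitrary morphisms — regularity is only needed where you invoke the Gysin maps $j_s^!$.
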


\begin{theorem} \label{t:intro-smooth}
For a smooth scheme $X$, the natural maps from $K^\circ(X)$ to $\opk^\circ(X)$ and from $\opk^\circ(X)$ to $K_\circ(X)$ are isomorphisms. 
\end{theorem}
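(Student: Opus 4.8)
The plan is to analyze the two natural maps $\theta\colon K^\circ(X)\to\opk^\circ(X)$ and $\rho\colon\opk^\circ(X)\to K_\circ(X)$ together. For $g\colon X'\to X$, write $c_g\colon K_\circ(X')\to K_\circ(X')$ for the operator that an operational class $c$ attaches to the fiber square with vertical maps $g$ and horizontal maps the identity, so that $\rho(c)=c_{\id_X}([\OO_X])$. First I would check that the composite $\rho\circ\theta$ is the classical natural map $K^\circ(X)\to K_\circ(X)$: for a perfect complex $E_\bullet$ on $X$, $\theta([E_\bullet])$ is by construction the collection $[\shfF]\mapsto\sum_i(-1)^i[\tor_i^X(E_\bullet,\shfF)]$, and evaluating its identity-square operator at $[\OO_X]$ returns $[E_\bullet\otimes^{\mathbb L}_{\OO_X}\OO_X]=[E_\bullet]\in K_\circ(X)$. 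Since $X$ smooth is regular of finite Krull dimension, every bounded complex of coherent sheaves on $X$ is perfect, so this classical map is an isomorphism. Hence $\theta$ is split injective and $\rho$ is split surjective, and it is enough to show $\theta$ is surjective: then $\theta$ is an isomorphism and so is $\rho=(\rho\theta)\circ\theta^{-1}$.

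So, given $c\in\opk^\circ(X)$, I would let $E\in K^\circ(X)$ be the unique class with $E\cap[\OO_X]=\rho(c)=c_{\id_X}([\OO_X])$ and aim to prove $c=\theta(E)$. Unwinding the definition of $\theta$, the class $\theta(E)$ has operator $\beta\mapsto g^*(E)\cap\beta$ for each $g\colon X'\to X$, using the $K^\circ(X')$-module structure on $K_\circ(X')$. Both $c_g$ and $\beta\mapsto g^*(E)\cap\beta$ are additive, and $K_\circ(X')$ is generated by the classes $[\OO_V]$ of integral closed subschemes $V\subseteq X'$; so, pushing forward along the closed embedding $i\colon V\hookrightarrow X'$ (over $X$) and using the projection formula, it suffices to prove the following: for every morphism $h\colon Z\to X$, one has $c_h([\OO_Z])=h^*(E)\cap[\OO_Z]$ in $K_\circ(Z)$.

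To prove that, I would use the graph construction, paralleling Fulton and MacPherson's computation of the operational Chow cohomology of a smooth variety. Since $X$ is smooth the diagonal $\Delta_X$ is a regular embedding, hence so is the graph $\gamma_h\colon Z\hookrightarrow Z\times X$, $z\mapsto(z,h(z))$, being the pullback of $\Delta_X$ along $h\times\id_X$. Regard $Z\times X$ as a scheme over $X$ via the second projection $p$, which is flat (a base change of $Z\to\Spec k$), and note $\gamma_h$ is a regular embedding of schemes over $X$ with $p\circ\gamma_h=h$ and $\gamma_h^!([\OO_{Z\times X}])=[\OO_Z]$. Then compatibility of $c$ with flat pullback along $p$ gives $c_p([\OO_{Z\times X}])=p^*(c_{\id_X}([\OO_X]))=p^*(E)\cap[\OO_{Z\times X}]$, and compatibility with the Gysin map of $\gamma_h$ gives
\[
c_h([\OO_Z])=c_h\bigl(\gamma_h^![\OO_{Z\times X}]\bigr)=\gamma_h^!\bigl(c_p[\OO_{Z\times X}]\bigr)=\gamma_h^!\bigl(p^*(E)\cap[\OO_{Z\times X}]\bigr)=h^*(E)\cap[\OO_Z],
\]
using $\gamma_h^*p^*=h^*$ and the identity $\gamma_h^!(F\cap\xi)=\gamma_h^*(F)\cap\gamma_h^!(\xi)$ for $F\in K^\circ(Z\times X)$. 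This gives the reduced claim and hence the theorem.

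The crux, and the only real content, is the graph step: smoothness of $X$ enters precisely in making the diagonal, and therefore every graph $Z\to Z\times X$, a regular embedding, which is what lets the flat-pullback and Gysin compatibilities built into an operational class propagate the single value $c_{\id_X}([\OO_X])$ to all of $c$. The part needing care is the bookkeeping with the bivariant axioms: checking that $p$ and $\gamma_h$ are genuine morphisms of fiber squares over $\id_X$, that the Gysin map attached to $\gamma_h$ is the expected derived restriction with $\gamma_h^!([\OO_{Z\times X}])=[\OO_Z]$, and the orientation conventions underlying $\gamma_h^!(F\cap\xi)=\gamma_h^*(F)\cap\gamma_h^!(\xi)$. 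I would also note that no resolution of singularities or global resolution property is needed: the identification $K^\circ(X)\cong K_\circ(X)$ on smooth $X$ goes through perfect complexes, so the argument is insensitive to whether $X$ is quasi-projective.
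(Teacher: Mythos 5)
Your argument is correct and is essentially the paper's own: Propositions \ref{p:opk-smooth} and \ref{p:smooth} (modeled on Fulton's 17.4.2 and 17.3.1) package precisely your graph-plus-d\'evissage computation showing that an operational class on smooth $X$ is determined by $c_{\id_X}([\OO_X])$ and that every value occurs, and combining this with Poincar\'e duality $K^\circ(X)\cong K_\circ(X)$ yields both isomorphisms exactly as in your two-out-of-three step. The one point to tighten is the justification that $\gamma_h$ is a regular embedding: this is not because regular embeddings are preserved by arbitrary base change (they are not), but because $\gamma_h$ is a section of the smooth projection $Z\times X\to Z$.
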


\begin{theorem} \label{t:simple-duality}
For any complete linear variety $X$, the natural map from $\opk^\circ(X)$ to $\Hom(K_\circ(X), \ZZ)$, induced by push forward to a point, is an isomorphism.
\end{theorem}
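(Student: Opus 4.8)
Write $\varepsilon_X\colon \opk^\circ(X)\to\Hom(K_\circ(X),\ZZ)$ for the map in the statement; it carries an operational class $c$ to the functional $[\shfF]\mapsto\chi_X(c\cap[\shfF])$, where $c\cap(-)$ is the action of $c$ on $K_\circ(X)$ attached to the square $X\xrightarrow{\id}X$ and $\chi_X\colon K_\circ(X)\to\ZZ$ is push-forward to a point. The plan is to adapt Totaro's proof of the analogous duality for operational Chow cohomology of linear varieties, using the identification $\opk^\circ(Y)=K_\circ(Y)$ for $Y$ smooth (Theorem~\ref{t:intro-smooth}), descent for Chow envelopes, and $\AA^1$-homotopy invariance (Theorem~\ref{t:intro-homotopy}) in place of the Chow-theoretic inputs. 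Under Theorem~\ref{t:intro-smooth}, for $Y$ smooth and complete the map $\varepsilon_Y$ is the Euler-characteristic pairing $[\shfE]\otimes[\shfF]\mapsto\chi(Y,\shfE\otimes^{\mathbf L}\shfF)$ on $K_\circ(Y)$, so the statement for such $Y$ is equivalent to unimodularity of that pairing, which is classical when $Y$ is in addition cellular (the structure sheaves of the cell closures furnish a $\ZZ$-basis of $K_\circ(Y)$ on which the pairing is unimodular, exactly as for projective spaces).

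The reduction to the smooth cellular case goes through envelopes. By the structure theory of linear varieties, $X$ admits a smooth complete hyperenvelope $\tilde X_\bullet\to X$ whose terms $\tilde X_i$ are disjoint unions of smooth complete cellular varieties (this uses resolution of singularities, or, for linear varieties, a combinatorial substitute built from the stratification). Descent for Chow envelopes presents $\opk^\circ(X)$ as the equalizer of the two face maps $\opk^\circ(\tilde X_0)\rightrightarrows\opk^\circ(\tilde X_1)$. On the homology side, push-forward along an envelope is surjective with kernel generated by the relevant differences, so $K_\circ(X)$ is the coequalizer of $K_\circ(\tilde X_1)\rightrightarrows K_\circ(\tilde X_0)$; applying the left-exact functor $\Hom(-,\ZZ)$ identifies $\Hom(K_\circ(X),\ZZ)$ with the equalizer of $\Hom(K_\circ(\tilde X_0),\ZZ)\rightrightarrows\Hom(K_\circ(\tilde X_1),\ZZ)$. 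The projection formula for operational classes gives, for each proper face map $g$ and each $c$, the identity $\varepsilon_{\tilde X_i}(c)\circ g_*=\varepsilon_{\tilde X_{i+1}}(g^*c)$, and likewise for the augmentation $\tilde X_0\to X$; hence the $\varepsilon_{\tilde X_i}$ assemble into a map of equalizers, which is $\varepsilon_X$. Since each $\tilde X_i$ is smooth complete cellular, each $\varepsilon_{\tilde X_i}$ is an isomorphism by the previous paragraph, hence so is $\varepsilon_X$. (An alternative, more hands-on route is to induct on the number of strata of $X$: the localization sequence $0\to K_\circ(Z)\to K_\circ(X)\to K_\circ(\AA^n)\to0$ for a closed linear $Z\subset X$ with open cell complement is short exact---the boundary $K_1(\AA^n)=k^\times\to K_\circ(Z)$ is killed by units pulled back from $X$---and splits, and one matches its dual with a Kimura-type exact sequence $0\to\ker(i^*)\to\opk^\circ(X)\xrightarrow{i^*}\opk^\circ(Z)$; here $\AA^1$-homotopy invariance is what identifies the rank of $\ker(i^*)$ with $\rk\opk^\circ(\AA^n)=1$, and the five lemma finishes.)

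The main obstacle is the bookkeeping in the reduction step: arranging a smooth complete hyperenvelope of $X$ all of whose terms---together with all faces, degeneracies, and fiber products---stay within a class of varieties to which the classical unimodularity applies, and checking that the $K_\circ$-descent (coequalizer) presentation is exact and genuinely dual to the $\opk^\circ$-descent (equalizer) presentation. This is exactly where completeness and linearity of $X$ are used essentially: operational theories satisfy neither localization nor this kind of duality for arbitrary singular schemes, so nothing like $\varepsilon_X$ being an isomorphism can be expected without the linear hypothesis. (In the hands-on route the corresponding difficulty is the Kimura-type exactness for $\opk^\circ$---in particular surjectivity of the restriction $\opk^\circ(X)\to\opk^\circ(Z)$ and the construction of the ``dual cell class'' in $\ker(i^*)$ realizing the generator of $\Hom(K_\circ(\AA^n),\ZZ)$---which again must be extracted from linearity.) Injectivity of $\varepsilon_X$ alone is comparatively cheap: $p^*\colon\opk^\circ(X)\hookrightarrow\opk^\circ(\tilde X_0)=K_\circ(\tilde X_0)$ is injective by descent, $\varepsilon_X(c)\circ p_*=\varepsilon_{\tilde X_0}(p^*c)$ by the projection formula, and $\varepsilon_{\tilde X_0}$ is injective since the Euler pairing is nondegenerate on a smooth complete cellular variety.
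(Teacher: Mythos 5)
Your proposal goes by a route the paper does not take, and the two steps you yourself flag as "the main obstacle" are genuine gaps, not bookkeeping. First, the reduction to "smooth complete cellular" terms is not available: resolution of singularities gives a smooth complete envelope $\tilde X_0 \to X$, but there is no reason $\tilde X_0$ is cellular (or even linear), and for a general smooth complete variety the Euler pairing is far from unimodular --- already $K_\circ$ of a smooth projective curve of positive genus contains $\mathrm{Pic}^0$, so $\varepsilon_{\tilde X_0}$ need not even be injective. Worse, the equalizer comparison you set up needs injectivity of $\varepsilon$ on $\tilde X_1 = \tilde X_0\times_X \tilde X_0$, which is singular, so the "classical unimodularity" input never applies at level one; and the paper's descent statement (Proposition~\ref{p:kim1}) is for a single envelope and its self-fiber product, not for a hyperenvelope whose higher terms have been re-resolved. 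Second, your "hands-on" alternative requires an exact sequence of the form $0 \to \ker(i^*) \to \opk^\circ(X) \to \opk^\circ(Z)$ coming from an open-closed decomposition, i.e.\ a localization property for the operational theory --- which, as you note yourself, operational theories do not have. (Also, the open strata of a linear variety are complements of linear varieties in $\AA^n$, not copies of $\AA^n$, so even the $K_\circ$-side of that induction is not as you wrote it.) Only your injectivity argument survives, and even it silently assumes the resolution is cellular.

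The paper's actual proof avoids envelopes entirely and has two ingredients you do not mention. The first is a K\"unneth isomorphism $K_\circ(X)\otimes K_\circ(Y) \xrightarrow{\ \sim\ } K_\circ(X\times Y)$ for $X$ linear and $Y$ arbitrary (Proposition~\ref{p.kunneth}), proved by Totaro's induction on the stratification of $X$: one compares the localization long exact sequence in higher $K$-theory with the sequence obtained from the smash product of $K$-theory spectra, using a T\"or-vanishing/EKMM argument to identify $\pi_0$ of the smash product with the tensor product. The second is the formal FMSS mechanism (Proposition~\ref{p.fmss}): given a functional $\phi\colon K_\circ(X)\to\ZZ$, one manufactures an operational class $c_\phi$ by pushing forward along the graph of $g\colon Y\to X$, applying K\"unneth to land in $K_\circ(X)\otimes K_\circ(Y)$, and contracting with $\phi$; this is an explicit two-sided inverse to $\varepsilon_X$, so surjectivity is never attacked head-on. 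That is the content your argument is missing: some construction of an operational class from a bare functional. If you want to salvage your approach, you would essentially have to reprove the K\"unneth statement anyway, at which point the FMSS argument is shorter than the descent bookkeeping.
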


\noindent Here, the class of linear varieties is the one studied by Totaro in \cite{totaro}.  It contains affine spaces of each dimension, the complement of any linear variety embedded in an affine space, and any variety stratified by linear varieties.  For example, any toric variety or Schubert variety is a linear variety.  One consequence of Theorem~\ref{t:simple-duality} is that $\opk^\circ(X)$ is finitely generated for any complete linear variety. 

The $\AA^1$-homotopy invariance of operational  $K$-theory suggests the potential for interesting connections to Weibel's homotopy $K$-theory $KH^*(X)$, another variation on the $K$-theory of perfect complexes with good geometric properties on singular spaces, which is $\AA^1$-homotopy invariant by construction.  See the original paper \cite{weibelKH}, as well as \cite[\S IV.12]{weibel} and \cite{haesemeyer} for details.  In Section~\ref{s:kimura}, we make one first step toward exploring the relations between these theories.  Let $K^\circ_{\mathrm{naive}}(X)$ denote the Grothendieck group of vector bundles on $X$.  In Corollary~\ref{c.kh} we show that if $X$ has a smooth birational envelope then there is a natural map from the degree zero part of $KH^*(X)$ to $\opk^\circ(X)$, which forms one step in a natural sequence of maps\footnote{If $X$ is smooth, then each of these maps is an isomorphism.}
\[
  K^\circ_{\mathrm{naive}}(X) \to K^\circ(X) \to KH^\circ(X) \to \opk^\circ(X) \to K_\circ(X),
\]
factoring the forgetful map $K^\circ_{\mathrm{naive}}(X) \rightarrow K_\circ(X)$.  In particular, such a map exists for arbitrary varieties over a field of characteristic zero, and for toric varieties over an arbitrary field.  Examples~\ref{ex:nodal}, \ref{ex:normal}, and \ref{ex:toric3} show that $KH^\circ(X) \to \opk^\circ(X)$ is not always injective, even for normal projective toric varieties.

For a toric variety $X$, the natural map $K^\circ(X) \rightarrow KH^\circ(X)$ is split surjective \cite[Proposition~5.6]{chww}.  If the map $KH^\circ(X) \to \opk^\circ(X)$ is also surjective, then we have a surjection $K^\circ(X) \to \opk^\circ(X)$.  (If, in addition, every coherent sheaf on $X$ is a quotient of a vector bundle, as is the case when $X$ is smooth or quasi-projective, then every class in $\opk^\circ(X)$ comes from a difference of vector bundles.)  In Theorem~\ref{t.toric3fold}, we prove that for a three-dimensional toric variety, the map $KH^\circ(X) \to \opk^\circ(X)$ is indeed surjective.  As mentioned earlier, it is not known whether such a variety carries a nontrivial vector bundle.  However, the preceding observations, combined with Theorem~\ref{t:simple-duality}, show that it does have nontrivial perfect complexes:

\begin{theorem}\label{t.toric-nontrivial}
For any complete three-dimensional toric variety $X$ over an algebraically closed field, the map $K^\circ(X) \to \Hom(K_\circ(X),\ZZ)$ is surjective.  In particular, $K^\circ(X)$ is nontrivial.
\end{theorem}

Theorem~\ref{t.toric-nontrivial} can be understood as complementary to results of Gharib and Karu \cite{gharib-karu}.  We find a finitely generated subgroup of nontrivial classes in $K^\circ(X)$, for an arbitrary complete toric threefold by lifting from $KH^\circ(X)$; they find a nontrivial $k$-vector space in the kernel of the map $K^\circ(X) \to KH^\circ(X)$, for many interesting examples of complete toric threefolds.

In the body of the paper, we work equivariantly, with respect to an action of a split torus $T$.  Theorems~\ref{t:intro-smooth} and \ref{t:simple-duality} are the special cases of Theorems~\ref{t:smooth} and \ref{t:duality}, respectively, where the torus is trivial.  Theorem~\ref{t:intro-homotopy} is the special case of Theorem~\ref{t:homotopy} where both the torus and the affine bundle are trivial.  The last of our main results addresses the special case that initially motivated this project, the equivariant $K$-theory of a singular toric variety.

\subsection*{Equivariant $K$-theory of toric varieties}  Throughout this paper, by a {\em toric variety} we mean a normal rational variety, together with a split torus acting with a free open orbit; such a variety corresponds to a fan $\Delta$ as described in \cite{Fulton}.  Let $X$ be a toric variety with dense torus $T$.  The restriction of an equivariant vector bundle to a $T$-fixed point is a representation of $T$, and these representations satisfy a compatibility condition: whenever two fixed points are connected by an invariant curve, the corresponding representations agree on its stabilizer.  If $X$ is smooth and complete, then the induced localization map into a product of copies of the representation ring of the torus,
\[
  K^\circ_T(X) \rightarrow \prod_{x \in X^T} R(T),
\]
is an isomorphism onto the subring consisting of consisting of tuples of virtual representations that satisfy this compatibility condition.  Furthermore, the ordinary Grothendieck ring of vector bundles $K^\circ(X)$ is the quotient of $K^\circ_T(X)$ by the ideal generated by differences of characters, giving a $K$-theoretic analogue of the Stanley-Reisner presentation for the cohomology ring of $X$ \cite{vv}.  These results may be seen as $K$-theory versions of Goresky-Kottwitz-MacPherson (GKM) localization for toric varieties \cite{gkm, knutson-rosu}.  (All such localization theorems build on earlier work of many authors, including the seminal results of Atiyah and Segal \cite{atiyah-segal}.)  These subrings of products of representation rings appearing in the equivariant $K$-theory of smooth toric varieties have the following pleasant interpretation in terms of piecewise exponential functions on the corresponding fan \cite[Section~2.4]{bv}.  

Let $M$ be the character lattice of $T$, so the representation ring $R(T)$ is canonically identified with the group ring $\ZZ[M]$.  Each $u \in M$ may be seen as an integral linear function on the dual space $N_\RR = \Hom(M, \RR)$, and the exponentials of these linear functions are linearly independent.  Therefore, $R(T)$ and $\ZZ[M]$ are naturally identified with $\Exp(N_\RR)$, the ring generated by the exponential functions $e^u$ for $u \in M$.  Elements of $\Exp(N_\RR)$ can be expressed essentially uniquely as finite sums
\[
a_1 e^{u_1} + \cdots + a_r e^{u_r},
\]
with $a_i \in \ZZ$ and $u_i \in M$.  Similarly, when $N'_\RR$ is a rational linear subspace of $N_\RR$, we write $\Exp(N'_\RR)$ for the ring generated by exponentials of linear functions in $M' = M / (N'^\perp_\RR \cap M)$.
\begin{definition}
Let $\Delta$ be a fan in $N_\RR$.  The ring of \define{integral piecewise exponential functions} on $\Delta$ is
\[
\PExp(\Delta) = \left\{ \text{continuous }f \colon |\Delta| \rightarrow \RR \;\; \rule[-7.5pt]{.5pt}{20pt} \;\; f|_\sigma \in \Exp(\Span(\sigma)) \mbox{ for each } \sigma \in \Delta \right\}.
\]
\end{definition}

The identifications above give a canonical isomorphism from $\PExp(\Delta)$ to a subring of a product of representation rings satisfying a natural compatibility condition
\[
\PExp(\Delta) \cong \{ (\rho_\sigma) \in \Pi_{\sigma \in \Delta} R(T_\sigma) \ | \ \rho_\sigma |_{T_\tau} = \rho_\tau \mbox{ whenever } \tau \preceq \sigma \}.
\]
Here, $T_\sigma$ is the pointwise stabilizer of the orbit $O_\sigma$ corresponding to a cone $\sigma \in \Delta$; if $\tau \preceq \sigma$ then $T_\tau$ is a subgroup of $T_\sigma$.  If $\shfE$ is an equivariant vector bundle on $X(\Delta)$, then the induced representations of pointwise stabilizers of orbits satisfy the compatibility condition, and hence give an element of $\PExp(\Delta)$.  In the terminology of \cite{vbs}, this piecewise exponential function is the trace of the exponential of the piecewise linear function on a branched cover of $\Delta$ associated to $\shfE$.  Roughly speaking, this means that it is expressed locally as a sum of exponentials of Chern roots.

The orbits $O_\sigma$ are smooth, so $\opk^\circ_T(O_\sigma)$ is naturally isomorphic to
\[
K^\circ_T(O_\sigma) = R(T_\sigma).
\]
In Section~\ref{s:proof}, we show that the virtual representations associated to an operational $K$-theory class satisfy the compatibility condition, giving a natural map from $\opk^\circ_T(X(\Delta))$ to $\PExp(\Delta)$.

\begin{theorem} \label{t:plp}
Let $X$ be the toric variety with dense torus $T$ corresponding to a fan $\Delta$.  Then the natural map from $\opk^\circ_T(X)$ to $\PExp(\Delta)$ is an isomorphism.
\end{theorem}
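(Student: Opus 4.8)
The plan is to construct an inverse to the natural map $\opk^\circ_T(X) \to \PExp(\Delta)$ by induction on the number of cones of $\Delta$, using the structural properties of operational $K$-theory established earlier in the paper (Gysin maps, pushforward, the Kimura-type exact sequences for envelopes) together with the toric geometry of orbit closures. First I would set up the localization/restriction framework: for each cone $\sigma$ the orbit $O_\sigma$ is a smooth $T$-variety with $\opk^\circ_T(O_\sigma) \cong R(T_\sigma)$, and restricting an operational class along $O_\sigma \hookrightarrow X$ produces a virtual representation $\rho_\sigma$. The compatibility $\rho_\sigma|_{T_\tau} = \rho_\tau$ for $\tau \preceq \sigma$ comes from considering the closure $V(\tau) = \overline{O_\tau}$, which is itself a toric variety containing both $O_\tau$ and $O_\sigma$, and functoriality of restriction; this shows the map to $\PExp(\Delta)$ is well-defined, as already asserted in the excerpt. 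The real content is surjectivity and injectivity.

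For injectivity, suppose an operational class $c$ restricts to $0$ in every $R(T_\sigma)$. I would argue that $c$ annihilates $K^{\circ}_{\circ,T}(X)$ — equivalently, acting as an operator, $c$ kills the class $[\OO_{V(\sigma)}]$ for every cone $\sigma$ — by downward induction on $\dim \sigma$, using that $K_{\circ,T}(X)$ is generated by structure sheaves of orbit closures together with the exact sequences relating $[\OO_{V(\sigma)}]$ to $[\OO_{O_\sigma}]$ and lower-dimensional pieces. Since $c$ restricted to the open orbit stratum vanishes, and operators commute with the open-closed pushforward/restriction sequences, one peels off strata one at a time. Then a class in $\opk^\circ_T(X)$ acting as zero on all of $K_{\circ,T}(X)$ must itself be zero: here I would invoke a Poincaré-duality-type or Kronecker-pairing argument for toric varieties, analogous to Theorem~\ref{t:simple-duality}, which should apply since toric varieties are linear varieties (the equivariant refinement is what is needed).

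For surjectivity, given a compatible tuple $(\rho_\sigma) \in \PExp(\Delta)$, I would build the corresponding operator directly. Choose a cone $\sigma_0$ of maximal dimension; $\rho_{\sigma_0} \in R(T_{\sigma_0})$, and the affine open $U_{\sigma_0}$ deformation-retracts $T$-equivariantly onto $O_{\sigma_0}$, so by homotopy invariance (Theorem~\ref{t:homotopy}) we get a class on $U_{\sigma_0}$ restricting correctly there. The task is to glue: using a Mayer–Vietoris / descent property of operational $K$-theory for the open cover by invariant affines $U_\sigma$, or alternatively descent for the proper surjective map $\coprod_{\sigma} U_\sigma \to X$ via Chow-envelope descent, reduce the gluing to the compatibility conditions on overlaps $U_\sigma \cap U_\tau = U_{\sigma \cap \tau}$ — which is exactly what $\PExp$ encodes. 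More explicitly, I would produce the operator $K_{\circ,T}(X') \to K_{\circ,T}(X')$ for each fiber square by writing any class in $K_{\circ,T}(X')$ in terms of pushforwards from preimages of orbit closures and prescribing the action via $\rho_\sigma$ on the stratum over $O_\sigma$, then checking compatibility with proper pushforward, flat pullback, and Gysin maps stratum-by-stratum — the Gysin compatibility for the inclusion of an orbit closure is where the matching condition $\rho_\sigma|_{T_\tau} = \rho_\tau$ gets used.

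The main obstacle I anticipate is the gluing/surjectivity step: operational classes are defined by their action on \emph{all} fiber squares, so constructing one from local toric data requires either a robust Zariski-descent statement for $\opk^\circ_T$ over invariant affine open covers (which may itself need proof, paralleling the Chow-envelope descent in the paper) or a careful hands-on definition of the operator together with verification of all bivariant axioms; checking naturality with respect to arbitrary base change, not merely toric base change, is the delicate point. The inductive bookkeeping over the face poset of $\Delta$, and ensuring the constructions at each cone are compatible with the face maps, is where most of the technical work will lie.
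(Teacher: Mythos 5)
Your proposal takes a genuinely different route from the paper's, but both halves of it have gaps that would need to be filled by substantial new results not available in the paper. The paper proves the theorem by toric resolution of singularities: it factors a resolution $X'\to X$ into blowups along smooth invariant centers, notes that each blowup is a birational equivariant envelope, and uses Proposition~\ref{p:kim2} (proper descent), Corollary~\ref{c:opKtrivial}, and induction on dimension and on the number of blowups to identify the image of $\opk_T^\circ(X)$ inside $\opk_T^\circ(X')\cong\PExp(\Delta')$ with exactly the piecewise exponentials pulled back from $\Delta$; injectivity is immediate from Lemma~\ref{l:env-inj} together with the injectivity of $\PExp(\Delta)\to\PExp(\Delta')$.

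The concrete gaps in your argument are these. For injectivity, ``peeling off strata'' only shows that $c_g(\xi)$ restricted to the preimage of the open stratum vanishes, i.e.\ that $c_g(\xi)$ lies in the image of $K^T_\circ$ of the preimage of the boundary --- the localization sequence has no left exactness, so you cannot conclude $c_g(\xi)=0$ stratum by stratum; and the Kronecker pairing of Theorem~\ref{t:duality} that you invoke to finish requires $X$ complete, whereas the theorem is stated for arbitrary fans. For surjectivity, the Zariski/Mayer--Vietoris descent for $\opk_T^\circ$ over the invariant affine cover is exactly the missing ingredient: it is proved nowhere in the paper (only \emph{proper} descent for envelopes is available, via Gillet's theorem), and without it the gluing does not close. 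Moreover, the step producing a class on $U_{\sigma_0}$ from $\rho_{\sigma_0}$ via Theorem~\ref{t:homotopy} fails for a singular cone $\sigma_0$, since $U_{\sigma_0}$ is then not an affine bundle over its closed orbit $O_{\sigma_0}$, so the homotopy invariance proved in the paper does not apply. The fallback of defining the operator by hand on each $K^T_\circ(X')$ and verifying all bivariant axioms is precisely the hard work the paper's envelope machinery is designed to avoid.
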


\noindent In other words, operational $K$-theory satisfies GKM localization with integer coefficients on arbitrary toric varieties.  

The construction of piecewise exponential functions associated to equivariant vector bundles, described above, induces a natural localization homomorphism from $K^\circ_T(X)$ to $\PExp(\Delta)$ for arbitrary toric varieties $\Delta$.  This map factors through the map from $\opk_T^\circ(X)$, and is an isomorphism when $X$ is smooth, but the kernel and image are not known in general, when $X$ is singular.

\medskip
Our main theorems are closely analogous to well-known results in operational Chow theory.  Notably, Theorems~\ref{t:plp} and \ref{t:duality} are $K$-theoretic versions of \cite[Theorem~1]{chow} and \cite[Theorem~2]{totaro}, respectively.   However, the proofs of the foundational results that make such computations possible in operational $K$-theory are substantially different from those in Chow theory.  See the discussion at the beginning of Section~\ref{s:kimura} for details.

\medskip

To conclude this introduction, we give some examples illustrating the main theorems.

\begin{example}\label{ex.1}
Fix $N=\ZZ^2$, with basis $\{e_1, e_2\}$, and let $\{u_1,u_2\}$ be the dual basis for $M$.  The fan for the weighted projective space $X=\PP(1,1,2)$ has rays through the lattice points $e_1$, $e_2$, and $-e_1-2e_2$.  A sketch of this fan $\Delta$, together with a piecewise exponential function $\xi$, are shown below.

\begin{center}
\begin{pspicture}(-35,-50)(100,50)

\pspolygon*[linecolor=lightgray](0,0)(25,0)(0,25)
\pspolygon*[linecolor=lightgray](0,0)(-12,-24)(0,25)
\pspolygon*[linecolor=lightgray](0,0)(25,0)(-12,-24)
\psline{->}(0,0)(35,0)
\psline{->}(0,0)(0,35)
\psline{->}(0,0)(-16,-32)

\rput(6,-17){$\sigma$}
\rput(-20,-38){$\tau$}

\end{pspicture}
\begin{pspicture}(-35,-50)(100,50)

\psline{->}(0,0)(35,0)
\psline{->}(0,0)(0,35)
\psline{->}(0,0)(-16,-32)

\rput[l](10,14){$2\ee^{u_1}+\ee^{u_2}-\ee^{u_1+u_2}$}
\rput[l](6,-14){$1+\ee^{u_1-u_2}$}
\rput[r](-8,3){$1+\ee^{-u_1}$}

\rput(0,-40){$\xi$}

\end{pspicture}
\end{center}

Let $\sigma$ be the maximal cone spanned by $e_1$ and $-e_1-2e_2$, let $\tau$ be the ray spanned by $-e_1-2e_2$, and take $D=V(\tau)$ and $p=V(\sigma)$ to be the corresponding divisor and fixed point, respectively.  Since $X\setminus D$ and $D\setminus p$ are $T$-invariant affine spaces, the classes $[\OO_X]$, $[\OO_D]$, $[\OO_p]$ form an $R(T)$-module basis for $K^T_\circ(X)$.  Taken together, Theorems~\ref{t:plp} and \ref{t:duality} say that the duals $[\OO_X]^\vee$, $[\OO_D]^\vee$, $[\OO_p]^\vee$ form a basis for $\PExp(\Delta) = \opk_T^\circ(X) = \Hom_{R(T)}(K^T_\circ(X),R(T))$.  Piecewise exponential functions corresponding to this basis are as follows:

\begin{pspicture}(-35,-50)(80,50)

\psline{->}(0,0)(35,0)
\psline{->}(0,0)(0,35)
\psline{->}(0,0)(-16,-32)

\rput[l](10,14){$\ee^{u_2}$}
\rput[l](6,-14){$1$}
\rput[r](-8,3){$\ee^{-2u_1+u_2}$}

\rput(0,-42){$[\OO_p]^\vee$}

\end{pspicture}
\begin{pspicture}(-45,-50)(60,50)

\psline{->}(0,0)(35,0)
\psline{->}(0,0)(0,35)
\psline{->}(0,0)(-16,-32)

\rput[l](10,14){$\ee^{u_1}-\ee^{u_1+u_2}$}
\rput[l](6,-14){$0$}
\rput[r](-6,3){$1-\,\ee^{-2u_1+u_2}$}

\rput(0,-42){$[\OO_D]^\vee$}

\end{pspicture}
\begin{pspicture}(-35,-50)(80,50)

\psline{->}(0,0)(35,0)
\psline{->}(0,0)(0,35)
\psline{->}(0,0)(-16,-32)

\rput[l](10,14){$(1-\ee^{u_1})(1-\ee^{u_2})$}
\rput[l](6,-14){$0$}
\rput[r](-8,0){$0$}

\rput(0,-42){$[\OO_X]^\vee$}

\end{pspicture}

\noindent
This makes it easy to compute.  For example,
\[
  \xi = (1+\ee^{u_1-u_2})[\OO_p]^\vee + [\OO_D]^\vee .
\]
\end{example}

\begin{remark}
The $K$-theory of complete singular toric varieties is already interesting in the special case of weighted projective spaces, generalizing the above example.  In the non-equivariant setting, the $K$-theory of weighted projective space was studied by Al-Amrani.  He shows that both the $K$-theory of coherent sheaves and the topological complex $K$-theory are free $\ZZ$-modules of rank one more than the dimension \cite{al1,al2}.  In the topological setting, recent work of Harada, Holm, Ray, and Williams identifies the equivariant topological $K$-theory of weighted projective space with the ring of integral piecewise exponential functions, under some divisibility conditions on the weights \cite{hhrw}.
\end{remark}

\begin{example}\label{ex:cusp1}
Consider the cuspidal cubic curve $X = \{y^3-x^2 z = 0\} \subseteq \PP^2$, with $T=\kk^*$ acting by $t\cdot[x,y,z]=[t^3 x, t^2 y, z]$.  Since this is a $T$-linear variety, stratified by the singular point $p=[0,0,1]$ and its complement, Theorem~\ref{t:duality} applies.  In fact, $K^T_\circ(X)$ is freely generated over $R(T)$ by $[\OO_X]$ and $[\OO_p]$, so $\opk_T^\circ(X)$ has the duals of these classes as a basis.

Of course, the same holds non-equivariantly, taking $T$ to be the trivial torus.  This stands in contrast to $K^\circ(X)$, which has the ground field $\kk$ as a direct summand; see Example~\ref{ex:cusp2}.
\end{example}

\bigskip

\noindent \textbf{Acknowledgments.} This collaboration began at a three-week workshop on toric topology at the Hausdorff Institute for Mathematics in Bonn.  We thank the Institute for providing an ideal working environment, and fellow participants T.~Bahri, M.~Franz, M.~Harada, and T.~Holm for many
stimulating discussions.  We are especially grateful to N.~Ray and G.~Williams, from whom we learned about the connection between equivariant vector bundles, equivariant $K$-theory, and piecewise exponential functions.  They have studied this relationship in the topological context \cite{rw}.  

Substantial portions of the work on this project were carried out during subsequent visits to Oberwolfach, the Max Planck Institute in Bonn, and the Institute of Mathematics of Jussieu.  We are grateful for the support and hospitality of these institutions.  

While preparing this manuscript, we have also benefited from many valuable discussions with friends and colleagues.  We especially thank V.~Batyrev, W.~Fulton, J.~Gonz\'alez, L.~Illusie, M.~Kapranov, K.~Karu, M.~Levine, S.~Mitchell, M.~Schlichting, T.~Sch\"urg, V.~Srinivas, B.~Totaro, C.~Weibel, and D.~Wilson.

\section{Background on equivariant $K$-theory}\label{s:eqk}

We begin with a review of basic facts about equivariant $K$-theory.  The foundational details are due to Thomason \cite{thomason,thomason-inv}, and an introductory reference for this material is \cite[\S5]{cg}.  Throughout, we work in the category of separated schemes of finite type over a field $\kk$, equipped with an action of a split torus $T$, which may be trivial.  All morphisms are equivariant with respect to the torus action.  

\subsection{Grothendieck groups}\label{ss.kgps} The Grothendieck group of equivariant coherent sheaves $K_\circ^T(X)$ is generated by classes $[\shfF]$ for each equivariant coherent sheaf $\shfF$ on $X$, subject to the relation $[\shfF]=[\shfF']+[\shfF'']$ for each exact sequence
\[
  0 \to \shfF' \to \shfF \to \shfF'' \to 0.
\]
The functor taking $X$ to $K_\circ^T(X)$ is covariant for equivariant proper maps, with the pushforward defined by $f_*[\shfF] = \sum (-1)^i[ R^if_*\shfF ]$.

The Grothendieck group of equivariant perfect complexes $K^\circ_T(X)$ is contravariant for arbitrary equivariant maps, via derived pullback.  Derived tensor product makes $K_T^\circ(X)$ into a ring, and $K^T_\circ(X)$ into a $K_T^\circ(X)$-module.  When $X$ is quasi-projective, or embeddable in a smooth scheme, or more generally, divisorial, the Grothendieck groups of equivariant vector bundles and equivariant perfect complexes are canonically identified, because in this setting all coherent sheaves admit resolutions by vector bundles \cite[Exp.~III, 2.2.9]{sga6}.  In such cases, since vector bundles are flat, the derived pullback is just the ordinary pullback and the derived tensor product is just the ordinary tensor product.  \emph{A priori}, perfect complexes define a $K$-theory that is better-behaved than the Grothendieck group of vector bundles; for instance, one has localization and Mayer-Vietoris sequences \cite{thomason-trobaugh}.\footnote{It is an open problem whether coherent sheaves admit resolutions by vector bundles on arbitrary separated schemes of finite type over a field.  On non-separated schemes, such as the bug-eyed plane, they do not.  See~\cite{resolution}.}

The ring $K_T^\circ(\pt)$ is isomorphic to the representation ring $R(T)$, so projection to a point makes $K_T^\circ(X)$ into an algebra over $R(T)$.  Letting $M=\Hom(T,\kk^*)$ be the character group, we have a natural isomorphism $R(T)\isom\ZZ[M]$, and given a character $u\in M$, we write $\ee^u\in R(T)$ for the corresponding representation class.

\subsection{Change of groups} Both $K^T_\circ$ and $K_T^\circ$ are functorial for change-of-groups homomorphisms: given $T' \to T$, there are natural maps $K^T_\circ(X) \to K^{T'}_\circ(X)$ and $K_T^\circ(X) \to K_{T'}^\circ(X)$, induced by letting $T'$ act on sheaves through its map to $T$.

\subsection{A forgetful map.} Regarding a vector bundle as a coherent sheaf defines a canonical map
\begin{equation}\label{e:poincare}
  K_T^\circ(X) \to K^T_\circ(X)  
\end{equation}
of $R(T)$-modules.  In general, this map is neither injective nor surjective, but when $X$ is smooth, it is an isomorphism \cite[Corollary~5.8]{thomason}.

\subsection{$\AA^1$-homotopy invariance} For any $T$-equivariant affine bundle $\pi\colon E \rightarrow X$, flat pullback gives a natural isomorphism
\[
  K^T_\circ(X) \cong K^T_\circ(E).
\]
(See \cite[Theorem~4.1]{thomason}, or \cite[\S5.4]{cg}.)  In particular, for any linear $T$-action on $\AA^1$, there is a natural isomorphism 
\[
K_\circ^T(X)  \cong K_\circ^T(X \times \AA^1),
\]
so Grothendieck groups of equivariant coherent sheaves are $\AA^1$-homotopy invariant.

On the other hand, Grothendieck groups of equivariant perfect complexes are $\AA^1$-homotopy invariant for smooth varieties, but not in general.

\begin{example}\label{ex:cusp2}
For the cuspidal plane curve $X = \Spec \kk[x,y]/(y^2-x^3)$, we have $K^\circ(X) = \ZZ \oplus \kk$.  On the other hand, a Mayer-Vietoris calculation shows that $K^\circ(X\times \AA^1)$ contains $\ZZ\oplus \kk[z]$.  
This an instance of the general fact that for one-dimensional schemes, $K^\circ(X) = K^\circ(X\times\AA^1)$ if and only if $X$ is seminormal \cite[I.3.11, II.2.9.1]{weibel}.

Now let $\bar{X} = \Proj \kk[x,y,z]/(y^2z-x^3)$ be the corresponding projective curve.  A similar calculation shows that $K_\circ(\bar{X}) = \ZZ^2 \oplus \kk$ (use \cite[Ex.~II.8.1b or Ex.~II.8.2]{weibel}).  In the case $\kk=\CC$, we see that $K^\circ(\bar{X})$ is uncountable.  (Compare this with Example~\ref{ex:cusp1}, which shows that $\opk^\circ(\bar{X}) \isom \ZZ^2$.)
\end{example}

\subsection{D\'evissage} 
When $X=\Spec A$ is affine, a torus action is the same as an $M$-grading on $A$, and an equivariant coherent sheaf corresponds to an $M$-graded $A$-module \cite[I.4.7.3]{sga3}.  Given such a module $F$ and a character $u\in M$, let $F(u)$ be the same module with shifted grading: $F(u)_v = F_{v-u}$.  In particular, for each $u\in M$, one obtains an equivariant line bundle corresponding to $A(u)$.  In a common abuse of notation, we denote this line bundle $\ee^u$, since it is isomorphic to the pullback of the corresponding representation.  Note that $[F(u)] = \ee^u\cdot[F]$ in $K_\circ^T(X)$.

\begin{lemma}\label{l:devissage}
Let $X$ be a scheme with an action of $T$.  The classes $[\OO_V]$, for $V\subseteq X$ a $T$-invariant subvariety, generate $K^T_\circ(X)$ as a module over $K_T^\circ(\pt) = R(T)$.
\end{lemma}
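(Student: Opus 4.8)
The plan is to argue by Noetherian induction on closed $T$-invariant subschemes of $X$, reducing the statement about a general coherent sheaf $\shfF$ to the case $\shfF = \OO_V$ for $V$ a $T$-invariant subvariety. First I would observe that since $K^T_\circ(X)$ is generated by classes $[\shfF]$ of $T$-equivariant coherent sheaves, it suffices to show that every such $[\shfF]$ lies in the $R(T)$-submodule generated by the $[\OO_V]$. Filtering $\shfF$ by the images of powers of its annihilator ideal (or, more simply, using that any coherent sheaf has a finite filtration whose successive quotients are pushforwards from $T$-invariant closed integral subschemes), one reduces to the case where $\shfF$ is a torsion-free equivariant sheaf on a $T$-invariant subvariety $V$ of $X$; by the exact sequence defining $K^T_\circ$ it is enough to treat sheaves supported set-theoretically on $V$.

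The key step is then to handle a torsion-free equivariant coherent sheaf $\shfF$ on an integral $T$-invariant subvariety $V$, which amounts to showing $[\shfF] \in R(T)\cdot[\OO_V] + (\text{classes supported on a proper $T$-invariant closed subset of }V)$. To do this I would pass to the generic point of $V$: over the function field, $\shfF$ has some rank $r$, so $\shfF$ and $\OO_V^{\oplus r}$ agree generically. More precisely, using D\'evissage-style reasoning on an affine $T$-invariant open $U = \Spec A \subseteq V$, the $M$-graded $A$-module corresponding to $\shfF|_U$ admits a graded filtration with subquotients of the form $(A/\pp)(u)$ for homogeneous primes $\pp$ and characters $u \in M$; the relation $[(A/\pp)(u)] = \ee^u \cdot [A/\pp]$ from the discussion preceding the lemma then expresses $[\shfF|_U]$ as an $R(T)$-combination of classes $[\OO_W]$ for $T$-invariant subvarieties $W \subseteq U$. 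Gluing back in, the difference between $[\shfF]$ and this combination is supported on $V \setminus U$ together with lower-dimensional strata, all proper $T$-invariant closed subsets, so the inductive hypothesis applies.

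The main obstacle I anticipate is making the graded filtration argument genuinely global and equivariant rather than merely working on an affine chart: one must either cover $V$ by finitely many $T$-invariant affines and patch, or work directly with a filtration of $\shfF$ by $T$-invariant coherent subsheaves whose quotients are supported on $T$-invariant subvarieties with the correct generic behavior. The standard non-equivariant d\'evissage filtration exists because of the Noetherian hypothesis, and the key point is that each step can be taken $T$-equivariantly — this follows because the relevant associated primes and the torsion subsheaves are canonical, hence automatically $T$-stable. Once equivariance of the filtration is in hand, the induction closes immediately: the base case is $X = \emptyset$, and at each stage we strictly decrease the support, so the process terminates. I would also note that the same argument shows the $[\OO_V]$ for $V$ ranging over $T$-invariant subvarieties \emph{of dimension equal to $\dim X$ or less} suffice, but the bare statement as given needs nothing more than termination of the Noetherian induction.
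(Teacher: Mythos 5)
Your argument is correct and matches the paper's proof in all essentials: the graded d\'evissage filtration with subquotients $(A/\pp_i)(u_i)$ on a $T$-invariant affine open, the relation $[(A/\pp)(u)]=\ee^u[A/\pp]$, and Noetherian induction via the localization sequence to absorb the error terms supported on the closed complement. The paper simply skips your preliminary reduction to torsion-free sheaves on integral subvarieties, applying the affine case and the sequence $K^T_\circ(Y)\to K^T_\circ(X)\to K^T_\circ(U)\to 0$ directly to $X$.
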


\begin{proof}
First consider the case where $X=\Spec A$ is affine, so $A$ is an $M$-graded ring, and an equivariant coherent sheaf corresponds to an $M$-graded $A$-module $F$.  One can find a chain
\[
  0 = F_0 \subset F_1 \subset \cdots \subset F_n = F
\]
of $M$-graded submodules such that $F_i/F_{i-1} \isom (A/\pp_i)(u_i)$, for some $M$-graded prime ideals $\pp_i \subset A$ and elements $u_i \in M$.  (Cf. \cite[Ch.~IV, \S1, Th\'eor\`eme~1]{bourbaki} for the ungraded case.)  
It follows that $[F] = \ee^{u_1}[\OO_{V_1}]+\cdots+\ee^{u_n}[\OO_{V_n}]$, where $V_i = \Spec(A/\pp_i)$.

For an arbitrary $X$, let $U\subseteq X$ be a nonempty $T$-invariant affine open, and let $Y=X\setminus U$.  (Such a $U$ exists, e.g., by applying \cite[Corollary~2]{sumihiro} to the normal locus of $X_{\mathrm{red}}$.)  There is an exact sequence
\begin{equation*}
  K^T_\circ(Y) \to K^T_\circ(X) \to K^T_\circ(U) \to 0.
\end{equation*}
We know $K^T_\circ(U)$ is generated by classes of structure sheaves of subvarieties, by the affine case, and we may assume the lemma for $K^T_\circ(Y)$ by induction on dimension and the number of irreducible components.  It follows that $K^T_\circ(X)$ is also generated by structure sheaves of subvarieties.
\end{proof}

\subsection{When a subtorus acts trivially}  In order to compute effectively in Sections~\ref{s:kimura} and \ref{s:proof}, we will need to handle the case where a subtorus acts trivially on $X$, as is the case for the action of the dense torus on a proper closed $T$-invariant subvariety of a toric variety.  

\begin{lemma}\label{l:Ktrivial}
Suppose $T=T_1\times T_2$ acts on $X$ such that the action of $T_1$ is trivial.  Then there is a canonical isomorphism $K^T_\circ(X) = R(T_1)\otimes K^{T_2}_\circ(X)$.
\end{lemma}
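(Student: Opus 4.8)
\textbf{Proof proposal for Lemma~\ref{l:Ktrivial}.}

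The plan is to reduce to the cited result \cite[Lemma~5.6]{thomason-inv}, but it is worth spelling out the map in both directions so the reader can see where the isomorphism comes from. First I would construct the map $R(T_1)\otimes K^{T_2}_\circ(X)\to K^T_\circ(X)$. Since $T_1$ acts trivially on $X$, every $T_1$-representation $V$ gives a $T_1$-equivariant (hence $T$-equivariant, via the $T_2$-action on $X$) locally free sheaf $V\otimes_\kk\OO_X$, and tensoring with this is exact; so the assignment $[V]\otimes[\shfF]\mapsto[V\otimes_\kk\shfF]$, where $\shfF$ carries its given $T_2$-action and the trivial $T_1$-action twisted by $V$, is well-defined and $R(T_1)$-bilinear, giving the desired homomorphism. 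For the inverse, I would use the fact that any $T$-equivariant coherent sheaf $\shfG$ on $X$, with $T_1$ acting trivially on the base, decomposes canonically into $T_1$-weight spaces $\shfG=\bigoplus_{u\in M_1}\shfG_u$, where each $\shfG_u$ is a $T_2$-equivariant coherent sheaf (the $T_1$-weight decomposition is functorial and exact because $T_1$ is linearly reductive, being a split torus). Sending $[\shfG]\mapsto\sum_u \ee^u\otimes[\shfG_u]$ is then additive on short exact sequences and visibly inverse to the first map.

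The key steps, in order, would be: (1) note that a split torus is linearly reductive, so $T_1$-equivariant coherent sheaves on $X$ (with trivial $T_1$-action) split functorially and exactly into $T_1$-isotypic components, each a $T_2$-equivariant coherent sheaf; (2) check that $\shfG\mapsto\bigoplus_u\shfG_u$ descends to a group homomorphism $K^T_\circ(X)\to R(T_1)\otimes K^{T_2}_\circ(X)$ by additivity on the weight-space decomposition; (3) construct the opposite map by $[V]\otimes[\shfF]\mapsto[V\otimes_\kk\shfF]$ and verify $R(T_1)$-bilinearity using exactness of $V\otimes_\kk(-)$; (4) verify the two composites are the identity — one direction is immediate from $(\bigoplus_u\shfG_u)$ reassembling $\shfG$, the other from the observation that $V\otimes_\kk\shfF$ has $T_1$-weight decomposition matching $V=\bigoplus V_u$. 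Then cite \cite[Lemma~5.6]{thomason-inv} to confirm this agrees with (or simply \emph{is}) the canonical isomorphism, and to cover any subtleties about equivariant coherence I have glossed.

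I expect the main obstacle to be purely bookkeeping rather than conceptual: making precise the functoriality and exactness of the $T_1$-weight decomposition in the equivariant coherent category, and keeping straight which factor of $T$ acts on which tensor factor when forming $V\otimes_\kk\shfF$. Since all the heavy lifting — in particular the comparison with derived functors and the full generality of Thomason's decomposition — is already available in \cite[Lemma~5.6]{thomason-inv}, the cleanest route is to state the two explicit maps above, observe they are mutually inverse, and defer to the cited lemma for the rest. No deep input beyond linear reductivity of split tori is needed.
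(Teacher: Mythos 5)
Your proposal is correct and matches the paper, which gives no independent argument but simply observes that the lemma is a special case of \cite[Lemma~5.6]{thomason-inv}; the explicit weight-space decomposition and external-tensor map you describe are precisely the standard content behind that citation. The only cosmetic point is that the tensor product is over $\ZZ$, so what you need is $\ZZ$-bilinearity compatible with the $R(T_1)$-action, not ``$R(T_1)$-bilinearity''; this does not affect the argument.
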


\noindent
The statement seems to be known, but we include an easy proof.  (The argument also shows the same is true for $K^T_i(X)$ with $i>0$.)

\begin{proof}
When $X$ is affine, we have $K^T_i(X) = R(T_1)\otimes K^{T_2}_i(X)$ for all $i\geq 0$ by \cite[Lemma~5.6]{thomason-inv}.  In general, let $U = \Spec A$ be a nonempty $T$-invariant affine open in $X$, and let $Y = X \setminus U$.  We have a diagram

{\footnotesize
\begin{diagram}
K^T_1(U) & \rTo & K^T_\circ(Y) & \rTo & K^T_\circ(X) & \rTo & K^T_\circ(U) & \rTo & 0 \\
\uTo     &      & \uTo         &      &  \uTo        &      &  \uTo  \\
R(T_1)\otimes K^{T_2}_1(U) & \rTo & R(T_1) \otimes K^{T_2}_\circ(Y) & \rTo & R(T_1) \otimes K^{T_2}_\circ(X) & \rTo & R(T_1) \otimes K^{T_2}_\circ(U) & \rTo & 0
\end{diagram}
}

\noindent
in which the rows are exact, the first and fourth vertical arrows are isomorphisms by the affine case, and the second vertical arrow is an isomorphism by induction on dimension and number of irreducible components.  An application of the five lemma completes the proof.
\end{proof}

\subsection{Gillet's exact sequence for envelopes} We will make essential use of the following equivariant analogue of a result of Gillet.  An \define{equivariant envelope} is a proper equivariant map $f\colon X' \to X$ such that for every $T$-invariant subvariety $V \subseteq X$, there is an invariant subvariety $V'\subseteq X'$ mapping birationally onto $V$.  

\begin{proposition}\label{p:env-exact}
Suppose $f\colon X' \to X$ is an equivariant envelope.  Then the sequence
\[
K^T_\circ(X'\times_X X') \xrightarrow{p_{1*}-p_{2*}} K^T_\circ(X') \xrightarrow{f_*} K^T_\circ(X) \to 0
\] 
is exact, where $p_1,p_2\colon X'\times_X X' \to X'$ are the projections.
\end{proposition}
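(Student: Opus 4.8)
The plan is to reduce the statement to the non-equivariant case proved by Gillet \cite{gillet}, using the structure theory for equivariant $K$-theory recalled in Section~\ref{s:eqk}. The key point is that the exact sequence in question is essentially a statement about generators and relations for $K^T_\circ(X)$, and both can be checked after a reduction to a locally closed stratification.

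First I would establish exactness at $K^T_\circ(X)$, i.e.\ surjectivity of $f_*$. By D\'evissage (Lemma~\ref{l:devissage}), $K^T_\circ(X)$ is generated over $R(T)$ by classes $[\OO_V]$ for $T$-invariant subvarieties $V \subseteq X$. Since $f$ is an envelope, each such $V$ is dominated birationally by an invariant subvariety $V' \subseteq X'$, and $f_*[\OO_{V'}] = [\OO_V]$ (the higher direct images are supported in smaller dimension, and one concludes by Noetherian induction on $\dim V$, exactly as in the non-equivariant argument). Thus $f_*$ is surjective. Exactness at the left-hand term, i.e.\ that $\ker(f_*)$ is generated by differences $p_{1*}\alpha - p_{2*}\alpha$, is the substantive part. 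Here I would argue by Noetherian induction on $\dim X$ together with the number of irreducible components, combined with the localization sequence: choose a $T$-invariant dense open $U \subseteq X$ over which $f$ is an isomorphism (possible since $f$ is an envelope, hence birational over a dense invariant open), set $Y = X \setminus U$ with reduced structure, $Y' = f^{-1}(Y)_{\mathrm{red}}$, and compare the localization sequences for $(X, Y, U)$ and for $(X' \times_X X', Y' \times_Y Y', U)$ mapping to the sequence for $(X', Y', U)$. Over $U$ everything is an isomorphism, so a diagram chase reduces exactness over $X$ to exactness over $Y$, which has strictly smaller dimension (or fewer components), and one invokes the inductive hypothesis after checking that $f|_{Y'} \colon Y' \to Y$ is again an equivariant envelope and that $Y' \times_Y Y'$ surjects onto the relevant fiber product — this last compatibility is where a little care with reduced structures and the behavior of $\times_X$ under restriction is needed.

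The only genuinely equivariant input is the affine/base case and the bookkeeping of characters. On a $T$-invariant affine open $\Spec A$ with $A$ an $M$-graded ring, the filtration argument from the proof of Lemma~\ref{l:devissage} expresses every class as an $R(T)$-combination of structure sheaves of invariant subvarieties, and one checks the relations among these come from the $p_{i*}$ exactly as Gillet does, with all maps respecting the $M$-grading so that the character twists $\ee^{u}$ pass through the argument formally. Alternatively — and this may be cleaner — when a subtorus acts trivially one can strip it off using Lemma~\ref{l:Ktrivial}, writing $K^T_\circ = R(T_1) \otimes K^{T_2}_\circ$, so that exactness with the residual group follows from the same statement for $T_2$ by flatness of $R(T_1)$ over $\ZZ$; iterating handles the general case, and for a torus acting with a dense orbit (or generically freely) on each stratum one is close to the non-equivariant situation on a quotient.

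The main obstacle I anticipate is the inductive step with the localization sequences: one must verify that $f^{-1}(Y)_{\mathrm{red}} \to Y_{\mathrm{red}}$ is an equivariant envelope (clear, since invariant subvarieties of $Y$ are invariant subvarieties of $X$ lying in $Y$) \emph{and}, more delicately, that the natural map $(f^{-1}Y)_{\mathrm{red}} \times_{Y_{\mathrm{red}}} (f^{-1}Y)_{\mathrm{red}} \to (X' \times_X X') \times_X Y$ induces a surjection on $K^T_\circ$, so that the image of $p_{1*} - p_{2*}$ restricted to $Y$-supported classes is controlled by the inductive hypothesis. This is a standard but slightly fiddly point in Gillet's original argument, and the equivariant version requires only that all the subschemes and maps in sight can be taken $T$-invariant, which they can. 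Once this diagram chase is set up, the conclusion is immediate.
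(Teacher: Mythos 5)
Your reduction of exactness in the middle to a diagram chase with the three-term localization sequences does not close up, and the point where it fails is precisely the one the paper flags as requiring higher $K$-theory. Following your chase: after correcting $\beta \in \ker(f_*)$ by an element of $\im(p_{1*}-p_{2*})$ so that it vanishes on $U'$, you write $\beta = i'_*\beta_Y$ with $\beta_Y \in K^T_\circ(Y')$, and you know only that $i_*\bigl(f_{Y*}\beta_Y\bigr) = 0$ in $K^T_\circ(X)$. The inductive hypothesis for $Y' \to Y$ applies to $\ker(f_{Y*})$, but $K^T_\circ(Y) \to K^T_\circ(X)$ is \emph{not} injective: its kernel is the image of the connecting map $\partial\colon K^T_1(U) \to K^T_\circ(Y)$ in Thomason's localization sequence. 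To repair this one must bring in the $K_1$ terms, use the section of $U' \to U$ (see below) to lift classes to $K^T_1(U')$, invoke compatibility of $\partial$ with proper pushforward, and then kill the resulting term $i'_*\partial'(u')$ using exactness of the localization sequence for $(X',Y',U')$ at $K^T_\circ(Y')$. None of this appears in your proposal; the obstacle you identify instead (reduced structures on fiber products) is minor. A second error: an envelope need not be an isomorphism over any dense open ($X' = X \sqcup X \to X$ is an envelope), so your choice of $U$ is not available. What the definition gives is an invariant \emph{section} of $f$ over a dense open $U$ of each component, which suffices both for exactness of the right-hand column (via the closed embedding $(s\circ f,\id)\colon U' \to U'\times_U U'$) and for the $K_1$-surjectivity needed above -- but this has to be said. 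Finally, the second paragraph's suggestion that one can reduce to the non-equivariant case by stripping off subtori is not a proof: Lemma~\ref{l:Ktrivial} applies only when a subtorus acts trivially on all of $X$, and a general $T$-scheme admits no such reduction to Gillet's theorem.

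For comparison, the paper does not argue this way at all: it defers the proof to the appendix, where it redoes Gillet's simplicial descent argument equivariantly. There one forms the $0$-coskeleton $Z_\bullet = \cosk_0^X(X')$, proves that $K^G_q(Z_\bullet) \to K^G_q(X)$ is an isomorphism (first for orbits $G/H$, where an equivariant section contracts the simplicial scheme, then by noetherian induction over invariant closed subschemes), and reads off the proposition from the edge of the spectral sequence $E^1_{p,q} = K^G_q(Z_p) \Rightarrow K^G_{p+q}(Z_\bullet)$. Both routes ultimately require higher equivariant $K$-theory; your localization-sequence induction could be made rigorous along the lines sketched above, but as written the essential input is missing and the chase is asserted rather than performed.
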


\noindent The non-equivariant version can be found in \cite[p.~300]{fulton-gillet} and \cite[Corollary~4.4]{gillet}.  While surjectivity of $f_*$ follows easily from Lemma~\ref{l:devissage}, exactness in the middle seems to a require a more complicated argument. The main ingredients of the proof are a descent theorem and a spectral sequence for equivariant $K$-theory of simplicial schemes; we will give a more detailed discussion in the appendix.  Exactness of the corresponding sequence for equivariant Chow groups is more elementary (see \cite[Theorem~1.8]{kimura} and \cite[\S2]{chow}).

\section{Refined Gysin maps}\label{s:gysin}

Consider a fiber square
\begin{equation}\label{e.fiber}
\begin{diagram}
 X' & \rTo^{f'} & Y' \\
 \dTo^{g'} &  & \dTo_g \\
 X  & \rTo^f   & Y.
\end{diagram}  
\end{equation}
As mentioned in the introduction, an $f$-perfect complex $\shfP_\bullet$ determines an operator $f^{\shfP_\bullet}: K_\circ(Y') \rightarrow K_\circ(X')$, given by
\[
[\shfF] \mapsto \sum_i (-1)^i \big[ Tor^Y_i (\shfP_\bullet, \shfF) \big].
\]
When $f$ has finite $\Tor$-dimension, which means that $\OO_X$ itself is $f$-perfect, we write $f^!$ for $f^{\OO_X}$, and call this the \emph{refined Gysin map} for $f$.  These are closely analogous to the refined Gysin maps for local complete intersection morphisms in Chow theory  \cite[\S6.2]{it}.  Here we review the necessary basic facts about $\Tor$ sheaves and refined Gysin maps in $K$-theory, following  \cite[Exp. III]{sga6}, \cite[III.6]{ega}, \cite[VI.6]{fl}, and especially \cite[\S2.2]{ks}.

\subsection{Tor sheaves} Recall that, given a sheaf $\shfE$ on $X$ and a sheaf $\shfF$ on $Y'$, one has Tor sheaves $\tor^Y_i(\shfE,\shfF)$ supported on $X'$.  In the affine case, writing $X=\Spec A$, $Y=\Spec B$, and $Y'=\Spec B'$, with $E$ an $A$-module and $F$ a $B'$-module, these are the sheaves defined by the $A\otimes_B B'$-modules $\Tor^B_i(E,F)$.  In the general case, one covers $X'$ by affines of the form $\Spec(A\otimes_B B')$, and takes the associated sheaves.  The map $f$ has \emph{finite Tor-dimension} if, for every sheaf $\shfF$ of $\OO_Y$-modules, all but finitely many of the Tor sheaves $\tor^Y_i(\OO_X,\shfF)$ are zero.

\subsection{Equivariant Tor} When $\shfE$ and $\shfF$ have an equivariant structure, the sheaves $\tor^Y_i(\shfE,\shfF)$ inherit a canonical equivariant structure.  To see this, first observe that the formation of Tor sheaves commutes with flat base change: suppose
\begin{diagram}
 W' & \rTo & Z' \\
 \dTo &  & \dTo \\
 W  & \rTo   & Z,
\end{diagram}  
is a fiber square, with flat maps $u\colon Z \to Y$, $u'\colon Z' \to Y'$, $v\colon W \to X$, $v'\colon W' \to X'$ such that each face of the cube, with top and bottom faces formed from this diagram and \eqref{e.fiber} above, is a fiber square.  Then there is a natural isomorphism
\begin{equation}\label{e.tor-natural}
  \tor^Z_i(v^*\shfE,u'^*\shfF) \isom v'^*\tor^Y_i(\shfE,\shfF).
\end{equation}
Take $Z=T\times Y$, $W=T\times X$, etc., let $p,a\colon T\times Y \to Y$ be the projection and action maps, respectively, and define $q,b\colon T\times X \to X$ similarly, as well as $p',a',q',b'$.  Then naturality of the isomorphism \eqref{e.tor-natural} means the isomorphism
\[
  \tor^{T\times Y}_i(q^*\shfE,p'^*\shfF) \isom \tor^{T\times Y}_i(b^*\shfE,a'^*\shfF)
\]
coming from the equivariant structures of $\shfE$ and $\shfF$ induces an isomorphism
\[
  q'^*\tor^Y_i(\shfE,\shfF) \isom b'^*\tor^Y_i(\shfE,\shfF),
\]
giving the equivariant structure of the Tor sheaf.

\subsection{Relatively perfect complexes}

Given a morphism $f\colon X \to Y$, a bounded complex $\shfP_\bullet$ of coherent $\OO_X$-modules is \emph{$f$-perfect} if, for all sheaves $\shfF$ of $\OO_Y$-modules, the Tor sheaves $\Tor^Y_i(\shfP_\bullet,\shfF)$ are zero for all but finitely many $i$.  When $f$ is equivariant, an \emph{equivariant $f$-perfect complex} is simply an equivariant complex that is also $f$-perfect.

There are several equivalent characterizations.  A useful fact is that when $f$ factors as a closed embedding $\iota\colon X \hookrightarrow M$ followed by a smooth projection $p\colon M \to Y$, a complex $\shfP_\bullet$ is $f$-perfect if and only if $\iota_*\shfP_\bullet$ is perfect on $M$, in the absolute sense defined in \S\ref{ss.kgps} above \cite[Exp.~III, 4.4]{sga6}.  Such a factorization exists whenever $X$ is quasi-projective.

\subsection{Equivariant refined Gysin maps}

Because $\Tor$ sheaves carry a canonical equivariant structure, the usual construction of refined Gysin maps in $K$-theory works equivariantly.  With notation as in \eqref{e.fiber}, for an equivariant map $f\colon X \to Y$ and an equivariant $f$-perfect complex $\shfP_\bullet$ on $X$, there is a pullback map $f^{\shfP_\bullet} \colon K_\circ^T(Y') \rightarrow K_\circ^T(X')$, given by
\begin{equation}\label{e.gysindef}
  f^{\shfP_\bullet}[\shfF] = \sum_i (-1)^i [\tor_i^Y(\shfP_\bullet,\shfF)].
\end{equation}
See \cite[\S2.2]{ks} for the non-equivariant case.  As before, we write $f^!$ for $f^{\OO_X}$, when $f$ has finite $\Tor$-dimension.  

In fact, the pullback $f^{\shfP_\bullet}$ only depends on the $K$-class of $\shfP_\bullet$: given a short exact sequence
\[
  0 \to \shfP'_\bullet \to \shfP_\bullet \to \shfP''_\bullet \to 0
\]
of equivariant $f$-perfect complexes, and an equivariant coherent sheaf $\shfF$ on $Y'$, we have 
\[
  f^{\shfP_\bullet}[\shfF] = f^{\shfP'_\bullet}[\shfF] + f^{\shfP''_\bullet}[\shfF]
\]
in $K^T_\circ(X')$.  (This follows from the long exact sequence for $\Tor$.)

\subsection{Commutativity properties of Gysin maps}  

The main facts we will need say that the equivariant Gysin maps associated to flat morphisms and regular embeddings commute with proper push forward and each other.  These are the analogues of the corresponding statements for Chow groups \cite[Proposition~6.3, Theorem~6.4]{it}.  
In fact, we will show something slightly more general.  First consider a diagram of fiber squares
\begin{equation}\label{e.2sq}
\begin{diagram}
 X'' & \rTo & Y''  \\
\dTo^{h'}  & & \dTo_h   \\
 X' & \rTo^{f'} & Y'  \\
\dTo^{g'} & & \dTo_g \\
X  & \rTo^{f}  & Y .
\end{diagram}
\end{equation}

\begin{lemma}\label{l.push-commute}
In the diagram of fiber squares \eqref{e.2sq}, suppose $h$ is proper and $f$ is either flat or a closed embedding.  Let $\shfP_\bullet$ be an equivariant $f$-perfect complex on $X$. Then $f^{\shfP_\bullet} h_* = h'_* f^{\shfP_\bullet}$.
\end{lemma}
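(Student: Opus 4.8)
The plan is to prove the statement by reducing to the affine case, where the Tor sheaves can be computed from explicit modules and the equivariant structure is transparent. The key point is that both sides of the asserted identity $f^{\shfP_\bullet} h_* = h'_* f^{\shfP_\bullet}$ are morphisms $K^T_\circ(Y'') \to K^T_\circ(X')$, so it suffices to check them on generators. By Lemma~\ref{l:devissage} it is enough to evaluate on classes of the form $\ee^u \cdot [\OO_W]$ for $W \subseteq Y''$ a $T$-invariant subvariety; and since the exponential shift $\ee^u$ commutes with pushforward and with the $K$-linear Gysin maps $f^{\shfP_\bullet}$, we may as well take $\shfF = \OO_W$, or more generally any equivariant coherent sheaf $\shfF$ on $Y''$.

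First I would handle the proper pushforward $h_*[\shfF] = \sum_j (-1)^j [R^j h_* \shfF]$. The identity to prove then becomes a statement about the interaction of $R^\bullet h_*$ with $\tor^Y_\bullet(\shfP_\bullet,-)$, namely a projection-type formula: in $K^T_\circ(X')$,
\[
  \sum_{i,j} (-1)^{i+j} [\tor_i^Y(\shfP_\bullet, R^j h_* \shfF)] = \sum_{i,j} (-1)^{i+j} [R^j h'_* \tor_i^Y(\shfP_\bullet, \shfF)].
\]
The natural tool is a base-change (Tor) spectral sequence comparing $R h_* \circ \otimes^{\mathbf{L}}$ with $\otimes^{\mathbf{L}} \circ Rh'_*$ along the fiber square, whose $E_2$ term is $R^j h'_* \tor_i^Y(\shfP_\bullet, \shfF)$ and which converges to the hyper-Tor of $R h_* \shfF$ against $\shfP_\bullet$; taking Euler characteristics in $K$-theory kills the differentials and the higher pages, yielding the equality. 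Since both the higher direct images and the Tor sheaves carry canonical equivariant structures (the latter by the flat-base-change argument in \S3.2, applied to the $T$-action maps), the whole spectral sequence is one of equivariant sheaves and the identity holds in $K^T_\circ$. In the flat case for $f$ this is essentially \cite[III.6]{ega}; in the regular-embedding case one can instead factor through the deformation to the normal cone, or simply invoke that a regular embedding is a closed embedding of finite Tor-dimension so the same hyper-Tor spectral sequence applies with $\shfP_\bullet = \OO_X$.

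**The main obstacle** I expect is the careful construction of the comparison spectral sequence in the equivariant category with the correct functoriality—i.e. checking that the isomorphism of $E_2$-terms is compatible with the $T$-linearizations, so that the degeneration argument genuinely takes place in $K^T_\circ(X')$ rather than merely in $K_\circ(X')$. This is handled exactly as in \S3.2: one pulls the entire construction back along the two maps $T \times (-) \rightrightarrows (-)$ (projection and action), uses flat base change \eqref{e.tor-natural} for each, and observes that the spectral sequence, being natural, transports the equivariant identifications of $\shfP_\bullet$ and $\shfF$ to equivariant identifications of every page. A secondary point is the reduction to the affine (or at least the quasi-projective) situation needed to invoke the module-level computations and the existence of the spectral sequence: this is achieved by covering $X'$ by $T$-invariant affine opens $\Spec(A \otimes_B B')$ as in \S3.1 and using that $K^T_\circ$ and all the operations in sight are local on $X'$ in the required sense, together with the localization sequence already used in the proof of Lemma~\ref{l:devissage}. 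Once these functoriality and locality checks are in place, the degeneration of Euler characteristics is a formal consequence of the long exact sequences for $\Tor$ and for $R^\bullet h_*$.
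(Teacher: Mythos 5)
Your proposal is correct and follows essentially the same route as the paper: reduce to a coherent sheaf $\shfF$ on $Y''$, identify $\tor^Y_\bullet(\shfP_\bullet, Rh_*\shfF)$ with $R^\bullet h'_*(\shfP_\bullet\otimes^L_{\OO_Y}\shfF)$ via the base-change isomorphism of Kato--Saito, and compare the two hyper-Tor spectral sequences converging to this common abutment, where taking Euler characteristics in $K^T_\circ(X')$ collapses all pages. The equivariance of the spectral sequences is handled exactly as you describe, by naturality under the flat base changes along the action and projection maps of \S3.2.
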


\noindent Here, the equality is an identity of maps from $K_\circ^T(Y'')$ to $K_\circ^T(X')$.  When $f$ is a closed embedding and $\shfP_\bullet =\OO_X$, the hypothesis says that $f$ is a regular embedding.

\begin{proof}
The proof is similar to a reduction argument given in the proof of \cite[Proposition~2.2.2]{ks}.  The case where $f$ is flat is easy, and left to the reader.  We assume that $f$ is a closed embedding.  Recall that the class of a complex $[\shfF_\bullet]$ of equivariant sheaves on $X$ is defined to be the alternating sum $\sum (-1)^i [\shfF_i]$.  Now let $\shfF$ be an equivariant coherent sheaf on $Y''$.  By \cite[Lemma~1.5.3]{ks}, we have a natural isomorphism
\begin{align}
  \tor^Y_p(\shfP_\bullet,Rh_*\shfF) &\xrightarrow{\sim} R^{-p}h_*(\shfP_\bullet\otimes^L_{\OO_Y} \shfF) \label{e.isom1}
\end{align}
of $\OO_{X'}$-modules, and $E^2$ spectral sequences
\begin{align*}
  \tor^Y_p(\shfP_\bullet, R^{-q}h_*\shfF) &\Rightarrow \tor^Y_{p+q}(\shfP_\bullet, Rh_*\shfF) \\
  \intertext{and}
  R^{-p}h_*\tor^Y_q(\shfP_\bullet,\shfF) &\Rightarrow  R^{-p-q}h_*(\shfP_\bullet\otimes^L_{\OO_Y} \shfF),
\end{align*}
also of $\OO_{X'}$-modules.  The isomorphism \eqref{e.isom1} says that the two spectral sequences converge to the same thing, and so
\begin{align*}
f^{\shfP_\bullet}h_*[\shfF] &= \sum_{p,q} (-1)^{p+q} [\tor^Y_q(\shfP_\bullet, R^p h_*\shfF)] \\
                  &= \sum_{p,q} (-1)^{p+q} [R^p h'_*\tor^Y_q(\shfP_\bullet,\shfF)] \\
                  &= h'_*f^{\shfP_\bullet}[\shfF],
\end{align*}
using the fact that $X'\hookrightarrow Y'$ is a closed embedding to replace $R^p h_*$ with $R^p h'_*$ in the second line.
\end{proof}

Next, consider the following diagram of fiber squares:

\begin{equation}\label{e.3sq}
\begin{diagram}
 X'' & \rTo & Y'' & \rTo  &  Z''  \\
\dTo & & \dTo    &       & \dTo_{h} \\
 X' & \rTo & Y'    & \rTo  &  Z' \\
\dTo & & \dTo \\
X  & \rTo^{f}  & Y .
\end{diagram}
\end{equation}

\begin{lemma}\label{l.commutes}
In the diagram of fiber squares (\ref{e.3sq}), suppose each of the maps $f$ and $h$ is either flat or a closed embedding.  Let $\shfP_\bullet$ be an equivariant $f$-perfect complex on $X$, and let $\shfQ_\bullet$ be an equivariant $h$-perfect complex on $Z''$.  Then, for any class $\xi$ in $K^T_\circ(Y')$,
\[
  f^{\shfP_\bullet} h^{\shfQ_\bullet}(\xi) = h^{\shfQ_\bullet} f^{\shfP_\bullet}(\xi)
\]
in $K^T_\circ(X'')$.
\end{lemma}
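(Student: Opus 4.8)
The plan is to reduce the statement, by standard dévissage, to the case of structure sheaves and then to a purely local (affine) computation involving a double Tor. First I would use Lemma~\ref{l:devissage}: since the classes $[\OO_V]$ for $T$-invariant subvarieties $V \subseteq Y'$ generate $K^T_\circ(Y')$ over $R(T)$, and both $f^{\shfP_\bullet} h^{\shfQ_\bullet}$ and $h^{\shfQ_\bullet} f^{\shfP_\bullet}$ are $R(T)$-linear, it suffices to check the identity on a class $\xi = [\shfF]$ for an equivariant coherent sheaf $\shfF$ on $Y'$ (one could even take $\shfF = \OO_V$, but any coherent sheaf works equally well once the machinery is set up). The case where $f$ and/or $h$ is flat is easy and can be disposed of first — flat pullback commutes with everything by the flat base-change isomorphism \eqref{e.tor-natural} — so the substance is the case where both $f$ and $h$ are closed embeddings.

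For the closed-embedding case, I would compute both sides as iterated Tor sheaves and exhibit a common refinement via a spectral-sequence (or balancing) argument, exactly paralleling the proof of Lemma~\ref{l.push-commute}. Concretely: $f^{\shfP_\bullet} h^{\shfQ_\bullet}[\shfF]$ is the alternating sum of the classes of $\tor^Y_p\big(\shfP_\bullet,\,\tor^{Z'}_q(\shfQ_\bullet,\shfF)\big)$ over all $p,q$, while $h^{\shfQ_\bullet} f^{\shfP_\bullet}[\shfF]$ is the alternating sum of the classes of $\tor^{Z'}_q\big(\shfQ_\bullet,\,\tor^Y_p(\shfP_\bullet,\shfF)\big)$. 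Working affine-locally — cover $X''$ by affines of the form $\Spec(A \otimes_B B' \otimes_{B'} \cdots)$ as in the definition of Tor sheaves — both iterated Tors are the two natural filtrations on the object $\shfP_\bullet \otimes^L_{\OO_Y} \shfF \otimes^L_{\OO_{Z'}} \shfQ_\bullet$ computing the homology of a single double (or triple) complex; the two spectral sequences abutting to it have $E^2$ pages given by the two orders of taking Tor. Since $f$ and $h$ have finite Tor-dimension on the relevant perfect complexes, these are bounded, so passing to the Grothendieck group kills the higher differentials and the extension problems, and the two alternating sums agree because they both equal $\sum_n (-1)^n [H_n(\shfP_\bullet \otimes^L \shfF \otimes^L \shfQ_\bullet)]$. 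The equivariant structures match because all the isomorphisms involved are natural, hence equivariant by the argument already given in \S3.2, and because we may assume the ambient schemes admit the required factorizations into closed embeddings followed by smooth morphisms (true $T$-equivariantly since a torus action on a quasi-projective scheme is linearizable); the general case follows by covering by invariant affines.

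The main obstacle — or rather, the step requiring the most care — is bookkeeping the equivariant structure through the associativity/balancing isomorphism for the triple derived tensor product, and making sure the spectral-sequence comparison is carried out with $\OO_{X''}$-modules (not merely after restriction) so that the class identity holds in $K^T_\circ(X'')$ rather than in some group where information has been lost. One clean way to organize this, which I would adopt, is to cite \cite[\S2.2]{ks} or \cite[Exp.~III]{sga6} for the non-equivariant balancing isomorphism $\tor^Y_\bullet(\shfP_\bullet, \tor^{Z'}_\bullet(\shfQ_\bullet,\shfF)) \Leftarrow \shfP_\bullet \otimes^L \shfF \otimes^L \shfQ_\bullet \Rightarrow \tor^{Z'}_\bullet(\shfQ_\bullet, \tor^Y_\bullet(\shfP_\bullet,\shfF))$ and then invoke naturality, exactly as in \S3.2, to upgrade it to an equivariant statement; the Grothendieck-group identity is then formal from boundedness of the spectral sequences. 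A subtlety worth flagging explicitly is that one genuinely uses that both $f$ and $h$ have finite Tor-dimension against the complexes in question — without this the alternating sums need not be finite — which is exactly the hypothesis that $\shfP_\bullet$ is $f$-perfect and $\shfQ_\bullet$ is $h$-perfect, together with $f$ and $h$ being flat or closed embeddings.
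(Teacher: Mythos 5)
Your argument is correct and is essentially the paper's own proof: after disposing of the flat case and reducing to an equivariant coherent sheaf $\shfF$, both iterated Tor sums are identified via the balancing isomorphism and the two spectral sequences of $\OO_{X''}$-modules converging to $\shfP_\bullet\otimes^L_Y\shfF\otimes^L_{Z'}\shfQ_\bullet$, exactly as the paper does by citing \cite[Lemma~1.5.2]{ks}. The preliminary d\'evissage to structure sheaves is unnecessary (the operators are defined termwise on classes of coherent sheaves, so checking on $[\shfF]$ already suffices), but this is harmless.
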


\noindent In other words, $f^{\shfP_\bullet} h^{\shfQ_\bullet} = h^{\shfQ_\bullet} f^{\shfP_\bullet}$ as maps from $K_\circ^T(Y')$ to $K_\circ^T(X'')$.  When $\shfP_\bullet = \OO_X$ and $\shfQ_\bullet = \OO_{Z''}$, the lemma says the refined Gysin maps commute, i.e., $f^! h^! = h^! f^!$.

\begin{proof}
For an equivariant coherent sheaf $\shfF$ on $Y'$, we need to show
\begin{align}\label{e.bigsum}
\begin{aligned}
 & \sum_p (-1)^p \sum_q (-1)^q [\tor^{Z'}_q(\shfQ_\bullet,\tor^Y_p(\shfP_\bullet,\shfF))] \\ &\qquad = \sum_q (-1)^q \sum_p (-1)^p [\tor^Y_p(\shfP_\bullet,\tor^{Z'}_q(\shfQ_\bullet,\shfF))] .
\end{aligned}
\end{align}

This is similar to Lemma~\ref{l.push-commute}.  It is easy when either $f$ or $h$ is flat, so we assume they are both closed embeddings.  In this case, it follows from the natural isomorphism
\begin{align*}
 \tor^Y_q( \shfP_\bullet, \shfQ_\bullet \otimes^L_{Z'} \shfF ) &\xrightarrow{\sim} \tor^{Z'}_q( \shfQ_\bullet, \shfP_\bullet \otimes^L_Y \shfF )
\end{align*}
and the spectral sequences
\begin{align*}
 {'E}^2_{pq} = \tor^{Y}_p( \shfP_\bullet, \tor^{Z'}_q( \shfQ_\bullet, \shfF ) ) & \Rightarrow \tor^Y_{p+q}( \shfP_\bullet, \shfQ_\bullet \otimes^L_{Z'} \shfF ) \\
 \intertext{and}
 {''E}^2_{pq} = \tor^{Z'}_p( \shfQ_\bullet, \tor^Y_q( \shfP_\bullet, \shfF ) ) & \Rightarrow \tor^{Z'}_{p+q}( \shfQ_\bullet, \shfP_\bullet \otimes^L_Y \shfF )
\end{align*}
of $\OO_{X''}$-modules \cite[Lemma~1.5.2]{ks}.
\end{proof}

\subsection{Functoriality}

If $f\colon X \to Y$ and $g\colon Y \to Z$ are morphisms of finite $\Tor$-dimension, then $h = g\circ f\colon X \to Z$ also has finite $\Tor$-dimension.  In general, however, it is not clear that $h^! = f^! \circ g^!$.  In a special case, however, this functoriality is easy to see.

\begin{lemma}\label{l.functorial}
Suppose $f\colon X \to Y$ factors as a closed embedding $\iota\colon X \hookrightarrow M$ followed by a smooth projection $p\colon M \to Y$.  Let $\shfP_\bullet$ be an equivariant $f$-perfect complex on $X$.  Then $f^{\shfP_\bullet} = \iota^{\shfP_\bullet} \circ p^!$ as maps $K^T_\circ(Y') \to K^T_\circ(X')$, for any $Y' \to Y$.
\end{lemma}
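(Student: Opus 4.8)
The plan is to reduce everything to the affine local picture and compute $\Tor$ sheaves directly. First I would observe that the claim is local on $Y'$, since the formation of $\Tor$ sheaves (and the Gysin maps defined from them in \eqref{e.gysindef}) commutes with restriction to an open subset, as does the pullback $p^!$ for the flat map $p$. So I may assume $Y' = \Spec B'$ is affine, hence $M \times_Y Y' = M'$ is covered by affines $\Spec C'$ with $C' = C \otimes_B B'$ for $M = \Spec C$ locally over some affine $\Spec B \subseteq Y$, and then restrict further to an affine $\Spec A' \subseteq X' = X \times_Y Y'$ with $A' = A \otimes_B B'$, where $X = \Spec A$ maps into $M = \Spec C$ by the surjection $C \to A$. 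Everything is now a statement about graded modules over these rings.

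The key computation is this: for an equivariant coherent sheaf $\shfF$ on $Y'$, corresponding to a graded $B'$-module $F$, I want to compare $f^{\shfP_\bullet}[\shfF] = \sum_i (-1)^i [\tor_i^Y(\shfP_\bullet, \shfF)]$ with $\iota^{\shfP_\bullet}(p^![\shfF]) = \iota^{\shfP_\bullet}\big(\sum_j (-1)^j [\tor_j^M(\OO_M, \text{(pullback of }\shfF))]\big)$. Since $p\colon M \to Y$ is smooth, hence flat, $\OO_M$ is flat over $\OO_Y$, so $\tor_j^M(\OO_M, p^*\shfF) = 0$ for $j > 0$ and the pullback $p^![\shfF]$ is simply $[p^*\shfF]$, the ordinary flat pullback. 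The heart of the matter is then the transitivity isomorphism for $\Tor$: because $p$ is flat, for the composite $X \xrightarrow{\iota} M \xrightarrow{p} Y$ one has a natural isomorphism
\[
  \tor^Y_i(\shfP_\bullet, \shfF) \isom \tor^M_i(\shfP_\bullet, p^*\shfF)
\]
(this is the derived base-change identity $\shfP_\bullet \otimes^L_{\OO_Y} \shfF \isom \shfP_\bullet \otimes^L_{\OO_M} Lp^*\shfF$ for $p$ flat, which in the affine setting is the standard base-change spectral sequence for $\Tor$ collapsing because $C$ is flat over $B$), and this isomorphism is compatible with the equivariant structures since those are built by flat base change along the torus action as in \eqref{e.tor-natural}. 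Taking alternating sums over $i$ then gives exactly $f^{\shfP_\bullet}[\shfF] = \iota^{\shfP_\bullet}[p^*\shfF] = \iota^{\shfP_\bullet}(p^![\shfF])$ in $K^T_\circ(X')$.

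The main obstacle, and the step requiring care rather than routine checking, is verifying that the transitivity isomorphism for $\Tor$ along the flat map $p$ respects the equivariant structures — that is, that the canonical isomorphism $\tor^Y_i(\shfP_\bullet, \shfF) \isom \tor^M_i(\shfP_\bullet, p^*\shfF)$ is $T$-equivariant. I would handle this exactly as in the construction of equivariant $\Tor$ in the excerpt: apply the naturality of the base-change isomorphism \eqref{e.tor-natural} to the cube built from the action and projection maps $T \times (-) \to (-)$, and observe that the two equivariant structures (the one on $\tor^Y$ and the one pulled back from $\tor^M$) are induced by the same isomorphism of sheaves on $T \times X'$. One also needs that $p^!$ really does coincide with ordinary flat pullback $p^*$ on $K^T_\circ$ — this is immediate from $\OO_M$ being flat over $\OO_Y$, so all higher $\Tor$ sheaves vanish and the definition \eqref{e.gysindef} collapses to the single term $[p^*\shfF]$. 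With these two equivariance compatibilities in place, the identity of maps $K^T_\circ(Y') \to K^T_\circ(X')$ follows on generators $[\shfF]$, hence on all of $K^T_\circ(Y')$, and the proof is complete.
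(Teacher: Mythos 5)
Your proof is correct and follows essentially the same route as the paper: both reduce the claim to the single canonical isomorphism $\tor^M_j(\shfP_\bullet,p'^*\shfF)\isom\tor^Y_j(\shfP_\bullet,\shfF)$ and take alternating sums, the only difference being that the paper simply cites \cite[Lemma 1.5.1]{ks} for this isomorphism while you re-derive it from flat base change and check its equivariance by hand. Your extra care with the equivariant structure is harmless but not needed beyond what the paper's earlier discussion of equivariant $\Tor$ already provides.
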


\begin{proof}
Given $Y' \to Y$, let $X' \hookrightarrow M' \xrightarrow{p'} Y'$ be the corresponding factorization of the map $f'\colon X' \to Y'$.  Given an equivariant coherent sheaf $\shfF$ on $Y'$, we need to prove
\[
  \sum_j (-1)^j [\tor^Y_j(\shfP_\bullet,\shfF)] = \sum_j (-1)^j [\tor^M_j(\shfP_\bullet,p'^*\shfF)],
\]
since the left-hand side is $f^{\shfP_\bullet}[\shfF]$ and the right-hand side is $\iota^{\shfP_\bullet}(p^![\shfF])$.  This follows from the isomorphism $\tor^M_j(\shfP_\bullet,p'^*\shfF) \xrightarrow{\sim} \tor^Y_j(\shfP_\bullet,\shfF)$ of $\OO_X$-modules \cite[Lemma 1.5.1]{ks}.
\end{proof}

\section{Operational bivariant $K$-theory}\label{s:opk}

Following the general construction of Fulton and MacPherson, which associates an operational bivariant theory to a covariant theory with a distinguished class of commuting Gysin maps, we define operational equivariant $K$-theory, as follows.  Our distinguished Gysin maps are those associated to flat morphisms and regular embeddings.  The independent squares in this bivariant theory are arbitrary fiber squares. 

\smallskip

 Let $f\colon X \rightarrow Y$ be a morphism.

\begin{notation}
For any morphism $Y' \rightarrow Y$, we write $X'$ for the fiber product $X \times_Y Y'$.  
\end{notation}

\begin{definition}
The \define{operational equivariant $K$-theory} is the bivariant group $\opk^\circ_T(f\colon X \rightarrow Y)$ whose elements are collections of operators
\[
c_g \colon K_\circ^T(Y') \rightarrow K_\circ^T(X'),
\]
indexed by morphisms $g\colon Y' \rightarrow Y$, that satisfy the following bivariant axioms.
\renewcommand{\labelenumi}{(A\arabic{enumi})}
\begin{enumerate}
\item \label{axiom:pushforward} In the diagram of fiber squares
\begin{diagram}
 X'' & \rTo^{f''} & Y'' \\
\dTo^{h'} & & \dTo_h \\
 X' & \rTo^{f'} & Y' \\
\dTo^{g'} & & \dTo_g \\
X  & \rTo^{f}  & Y ,
\end{diagram}
if $h$ is proper then $c_g \circ h_* = h'_* c_{gh}$.
\smallskip
\item \label{axiom:gysin} In the diagram of fiber squares
\begin{diagram}
 X'' & \rTo & Y'' & \rTo  &  Z''  \\
\dTo & & \dTo_{h'}    &       & \dTo_{h} \\
 X' & \rTo & Y'    & \rTo  &  Z' \\
\dTo & & \dTo \\
X  & \rTo^{f}  & Y,
\end{diagram}
if $h$ is either flat or a regular embedding then $c_{gh'} \circ h^! = h^! \circ c_g$.
\end{enumerate}
\end{definition}

\noindent Roughly speaking, axioms (A\ref{axiom:pushforward}) and (A\ref{axiom:gysin}) are commutativity with proper push forward and refined Gysin maps, respectively.  Commutativity with flat pullback is the special case of (A\ref{axiom:gysin}) where $h$ is flat, $Y' = Z'$, and $Y' \rightarrow Z'$ is the identity.  The group law on $\opk^\circ_T(f\colon X \rightarrow Y)$ is given by addition of homomorphisms.  

\begin{notation}
When no confusion seems possible, we omit the name of the morphism $f$ and write simply $\opk^\circ_T(X \rightarrow Y)$.

We write $\opk_T^\circ(X)$ for the operational $K$-group of the identity morphism on $X$.  This is an associative ring with unit, by composition of endomorphisms, and it is contravariantly functorial in $X$.
\end{notation}

\subsection{Operations}

We now describe several natural operations on operational $K$-groups.  The first is a pullback that makes $\opk^\circ_T$ into a contravariant functor on the category of morphisms, in which arrows are given by fiber squares.  Let $g\colon Y' \rightarrow Y$ be a morphism, and consider the diagram of fiber squares above.

\renewcommand{\labelenumi}{(O\arabic{enumi})}
\begin{enumerate}
\item The pullback
\[
g^*\colon \opk^\circ_T(X \rightarrow Y) \rightarrow \opk^\circ_T(X' \rightarrow Y')
\]
is given by the collection of operators $(g^*c)_h = c_{gh}$.
\end{enumerate}  

\noindent The next two operations are defined in terms of the following diagram of fiber squares
\begin{diagram}
 X' & \rTo^{f'} & Y' & \rTo^{g'} & Z' \\
\dTo & & \dTo_{h'}   &     & \dTo_{h} \\
 X & \rTo^{f} & Y   &  \rTo^g  &  Z.
\end{diagram}

\noindent First we describe the product operation.

\begin{enumerate}
\setcounter{enumi}{1}
\item The product
\[
\cdot : \opk^\circ_T(X \rightarrow Y) \otimes \opk^\circ_T(Y \rightarrow Z) \rightarrow \opk^\circ_T(X \rightarrow Z)
\]
is given by composition of homomorphisms, so $(c' \cdot c)_h = c'_{h'} \circ c_h$.
\end{enumerate}

\noindent Next, we describe the push forward operation for proper morphisms.

\begin{enumerate}
\setcounter{enumi}{2}
\item If $f$ is proper then the pushforward
\[
f_* \colon \opk^\circ_T(X \rightarrow Z) \rightarrow \opk^\circ_T(Y \rightarrow Z)
\]
is given by $f_*(c)_h = f'_* \circ c_h$.
\end{enumerate}

\subsection{Basic properties} These operations satisfy a number of basic compatibility properties that follow from associativity of composition of operators, standard properties of proper pushforward, and the axioms above.  These are similar to standard properties of operational Chow cohomology \cite[Section~17.2]{it}.  Taken together, these properties verify that $\opk_T^\circ$ satisfies the axioms for a bivariant theory, as defined in \cite[\S2 and \S8]{bt}.

\begin{enumerate}
\renewcommand{\labelenumi}{(P\arabic{enumi})}
\item \emph{Associativity of products.} If $c \in \opk^\circ_T(X\rightarrow Y)$, $d \in \opk^\circ_T(Y \rightarrow Z)$, and $e \in \opk^\circ_T(Z \rightarrow W)$, then
\[
(c \cdot d) \cdot e = c \cdot (d \cdot e).
\]
\item \emph{Functoriality of push forward.} If $f: X\rightarrow Y$ and $g: Y \rightarrow Z$ are proper, with $Z \rightarrow W$ arbitrary, and $c \in \opk^\circ_T(X \rightarrow W)$, then
\[
(gf)_* (c) = g_*(f_*(c)).
\]
\item \emph{Functoriality of pull back.}
If $g: Y' \rightarrow Y$ and $h: Y'' \rightarrow Y'$, are arbitrary, $X'' = X \times_Y Y''$, and $c \in \opk^\circ_T(X \rightarrow Y)$ then
\[
(gh)^* (c) = g^*(h^*(c)).
\]
\item \emph{Product and push forward commute.}
If $f: X \rightarrow Y$ is proper, $Y \rightarrow Z$ and $Z \rightarrow W$ are arbitrary, $c \in \opk^\circ_T(X \rightarrow Z)$, and $d \in \opk^\circ_T(Z \rightarrow W)$ then
\[
f_*(c) \cdot d = f_*( c \cdot d).
\]
\item \emph{Product and pull back commute.} If $c \in \opk^\circ_T(X \rightarrow Y)$, $d \in \opk^\circ_T(Y \rightarrow Z)$, and $h : Z' \rightarrow Z$, with
\begin{diagram}
 X' & \rTo & Y' & \rTo & Z' \\
\dTo & & \dTo_{h'}   &     & \dTo_{h} \\
 X & \rTo & Y   &  \rTo  &  Z
\end{diagram}
the resulting diagram of fiber squares, then
\[
h^*(c \cdot d) = h'^*(c) \cdot h^*(d).
\]
\item \emph{Push forward and pull back commute.}  If $f\colon X \rightarrow Y$ is proper, $Y \rightarrow Z$ and $h\colon Z' \rightarrow Z$ are arbitrary, and $c \in \opk^\circ_T(X \rightarrow Z)$, with
\begin{diagram}
 X' & \rTo^{f'} & Y' & \rTo & Z' \\
\dTo & & \dTo_{h'}   &     & \dTo_{h} \\
 X & \rTo^{f} & Y   &  \rTo  &  Z
\end{diagram}
the resulting diagram of fiber squares, then
\[
h^*(f_*(c)) = f'_*(h^*(c)).
\]
\item \emph{Projection formula.} If $X \rightarrow Y$ and $Y \rightarrow Z$ are arbitrary, and $h' \colon Y' \rightarrow Y$ is proper, then defining notation by the diagram
\begin{diagram}
 X' & \rTo & Y' &  & \\
\dTo^{h''} & & \dTo_{h'}   &     &  \\
 X & \rTo & Y   &  \rTo  &  Z,
\end{diagram}
for $c \in \opk_T^\circ(X \rightarrow Y)$ and $d \in \opk^\circ_T(Y \rightarrow Z)$ we have
\[
c \cdot h'_*(d) = h''_*(h'^*(c) \cdot d).
\]
\end{enumerate}

These properties all follow from the commutativity lemmas of Section~\ref{s:gysin}.  A further property, particular to the equivariant case, is that each group $\opk_T^\circ(X \to Y)$ is an $R(T)$-module.  (Since $R(T) = K_T^\circ(\pt)$, this follows from commutativity with flat pullback (Lemma~\ref{l.commutes}).)

\subsection{Orientations}

By Lemmas~\ref{l.push-commute} and \ref{l.commutes}, the refined Gysin maps associated to flat morphisms and regular embeddings are compatible with proper push forward and commute with each other.  Therefore, if $h \colon X \rightarrow Y$ is such a morphism, the refined pullback $h^!$ defines an element of $\opk^\circ_T(X \rightarrow Y)$, called the \emph{canonical orientation} of $h$, which we denote by $[h]$.

Given morphisms $X \xrightarrow{f} Y \xrightarrow{g} Z$, with $h=g\circ f$, it is not clear that $h^! = f^!g^!$ even when these pullbacks are defined, so in general we do not know if $[h] = [f]\cdot [g]$.  However, in one important special case this property is easy to check.

\begin{lemma}\label{l.sections}
Let $g\colon Y \to Z$ be a smooth morphism, let $f\colon X \to Y$ be a regular embedding, and let $h = g\circ f\colon X \to Z$.  Then given any $Z' \to Z$, we have $f^! g^! = h^!$ as homomorphisms $K^T_\circ(Z') \to K^T_\circ(X')$.  Therefore
\[
[f]\cdot[g] = [h]
\]
in $\opk_T^\circ(X \to Z)$.
\end{lemma}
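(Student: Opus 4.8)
The plan is to reduce the statement to the functoriality result already established in Lemma~\ref{l.functorial}. The key observation is that for the composite $h = g \circ f$ with $g$ smooth and $f$ a regular embedding, the complex $\OO_X$ is $h$-perfect: since $g$ is smooth it has finite $\Tor$-dimension, and a regular embedding is an $l.c.i.$ morphism, so $\OO_X$ has finite $\Tor$-dimension over $Y$; combining these, $h$ has finite $\Tor$-dimension. Moreover, $h = g \circ f$ is already presented in the form required by Lemma~\ref{l.functorial}: $f\colon X \hookrightarrow Y$ is the closed embedding and $g\colon Y \to Z$ is the smooth projection. So applying Lemma~\ref{l.functorial} with $M = Y$, $p = g$, $\iota = f$, and $\shfP_\bullet = \OO_X$ gives exactly $h^{\OO_X} = \iota^{\OO_X} \circ p^! = f^! \circ g^!$ as maps $K^T_\circ(Z') \to K^T_\circ(X')$ for every $Z' \to Z$. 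Since $h^{\OO_X} = h^!$ by definition, this is the first assertion.

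For the second assertion, I would simply translate the equality of operators into the language of bivariant classes. By Lemma~\ref{l.sections}'s hypotheses, $f$ is a regular embedding and $g$ is flat (being smooth), so both $[f] = f^!$ and $[g] = g^!$ are defined as canonical orientations in $\opk^\circ_T(X \to Y)$ and $\opk^\circ_T(Y \to Z)$ respectively, and $[h] = h^!$ is defined in $\opk^\circ_T(X \to Z)$. The product $[f] \cdot [g]$ is, by the definition of the bivariant product (O2), the collection of operators whose component over $h'\colon Z' \to Z$ is $f^!_{h''} \circ g^!_{h'}$, where $h''$ is the induced map and the subscripts denote the refined Gysin maps in the relevant fiber squares. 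The content of the first part is precisely that this composite equals $h^!_{h'}$ for every such $Z'$, which is the statement that $[f]\cdot[g] = [h]$ in $\opk_T^\circ(X \to Z)$.

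The only point requiring a small amount of care is checking that the refined Gysin maps appearing in the bivariant product are the ones to which Lemma~\ref{l.functorial} applies, i.e., that for an arbitrary base change $Z' \to Z$, the factorization of the base-changed map $f'\colon X' \to Y'$ through the closed embedding into $Y'$ and the smooth projection $Y' \to Z'$ is compatible with the original one — but this is automatic since closed embeddings and smooth morphisms are stable under base change, and is exactly the setup already handled inside the proof of Lemma~\ref{l.functorial} (which is stated "for any $Y' \to Y$"). I do not expect any serious obstacle here; the whole lemma is essentially an unwinding of Lemma~\ref{l.functorial} together with the definition of the product, and the substantive work was already done in Section~\ref{s:gysin}.
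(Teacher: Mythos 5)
Your proposal is correct and follows exactly the paper's route: the paper also deduces the lemma immediately from Lemma~\ref{l.functorial} applied to the factorization $X \xrightarrow{f} Y \xrightarrow{g} Z$ with $\shfP_\bullet = \OO_X$, and then reads off $[f]\cdot[g]=[h]$ from the definition of the bivariant product. Your extra remarks (that $\OO_X$ is $h$-perfect and that the factorization is stable under base change) are sound and merely make explicit what the paper leaves implicit.
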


The proof is immediate from Lemma~\ref{l.functorial}.  As a particular case, when $Z=X$ and $f$ is a section of $g$, so $h=\id$, we see that $[f]\cdot [g] = 1$ in $\opk_T^\circ(X)$.

\subsection{Geometric properties}

These operational $K$-groups also have geometric properties that are similar to those of operational Chow cohomology, but are not immediate formal consequences of the axioms.  The most important of these, for our purposes, is the following proposition, whose proof is similar to that of \cite[Proposition~17.4.2]{it}.

\begin{proposition}\label{p:opk-smooth}
Let $X \to Y$ be an arbitrary morphism, and let $g\colon Y \to Z$ be smooth.  Then
\[
  \cdot [g] \colon \opk_T^\circ(X \to Y) \to \opk_T^\circ(X\to Z)
\]
is an isomorphism.
\end{proposition}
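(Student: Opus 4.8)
The plan is to construct an explicit inverse to $\cdot\,[g]$ using a section of $g$ after base change. Since $g\colon Y\to Z$ is smooth, it need not admit a section globally, but the key point is that for \emph{any} morphism $Z'\to Z$, the projection $g'\colon Y'=Y\times_Z Z'\to Z'$ has a canonical section over the base $Y'$ itself: namely the diagonal morphism $\delta\colon Y' \to Y'\times_{Z'}Y'$, which (since $g$ is smooth) is a regular embedding. First I would fix this picture: given $c\in \opk^\circ_T(X\to Z)$, I want to produce an element of $\opk^\circ_T(X\to Y)$. For each $h\colon Y'\to Y$, composing with $g$ gives $gh\colon Y'\to Z$, so $c$ supplies an operator $c_{gh}\colon K^T_\circ(Y')\to K^T_\circ(X')$ where $X'=X\times_Z Y'$. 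But because $X\to Y\to Z$, we have $X\times_Y Y' = X\times_Z Y'$ (as $Y'$ already maps to $Y$), so $c_{gh}$ is exactly an operator $K^T_\circ(Y')\to K^T_\circ(X\times_Y Y')$. Thus the collection $(c_{gh})_h$ is literally a candidate element $\psi(c)\in\opk^\circ_T(X\to Y)$.

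The main work is then threefold. First, verify that $\psi(c)$ satisfies the bivariant axioms (A1) and (A2): commutativity with proper pushforward and with refined Gysin maps for flat morphisms and regular embeddings. This is essentially inherited from the corresponding properties of $c$ over $Z$, since a fiber square over $Y$ is in particular a fiber square over $Z$; one just has to match up the fiber products carefully using that $X\times_Y(-) = X\times_Z(-)$ on schemes over $Y$. Second, check that $\psi$ is a two-sided inverse to $\cdot\,[g]$. In one direction, $(c\cdot[g])_h = c_{h'}\circ (g')^!$ where $g'\colon Y'\to Z'$ is the base change of $g$ and $h'\colon Z'\to Z$ — unwinding the definitions and applying $\psi$ should recover $c$ on the nose, using $\id^! = \id$. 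In the other direction, starting from $d\in\opk^\circ_T(X\to Y)$, forming $\psi(d\cdot[g])$ — wait, that is the wrong composite; rather I must show $\psi$ followed by $\cdot\,[g]$ is the identity, which amounts to the identity $\delta^!\circ c_{g\,\mathrm{pr}} = c$ type statement, i.e. that pulling back along $g$ and then restricting along the diagonal section undoes nothing. This is where Lemma~\ref{l.sections} (or directly Lemma~\ref{l.functorial}) enters: the composite of the smooth projection $\mathrm{pr}\colon Y'\times_{Z'}Y'\to Y'$ with the diagonal regular embedding $\delta$ is the identity on $Y'$, so $\delta^!\circ\mathrm{pr}^! = \id^! = \id$, and combined with axiom (A2) applied to $c$ this collapses the composite back to $c_h$.

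I expect the main obstacle to be bookkeeping the fiber products and base-changed morphisms correctly — in particular keeping straight which operator $c_?$ is being evaluated, since $c$ is indexed by morphisms to $Z$ while the element we build is indexed by morphisms to $Y$, and the reindexing $h\mapsto gh$ must interact correctly with both axioms and with the definition of the product $\cdot\,[g]$. A secondary subtlety is confirming that the diagonal $Y'\to Y'\times_{Z'}Y'$ is a regular embedding for arbitrary base change $Z'\to Z$; this follows because smoothness of $g$ is preserved under base change, so $g'$ is smooth, hence separated with regularly immersed diagonal, and the canonical orientation $[\delta]$ is therefore defined. Once these compatibilities are pinned down, the proof is a formal manipulation of the operator collections, closely parallel to the Chow-theoretic argument in \cite[Proposition~17.4.2]{it}, with Lemma~\ref{l.functorial} supplying the one nontrivial input that makes the section argument work.
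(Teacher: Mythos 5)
There is a genuine gap at the very first step: the collection $(c_{gh})_h$ is not an element of $\opk^\circ_T(X\to Y)$. Your claim that $X\times_Y Y' = X\times_Z Y'$ ``as $Y'$ already maps to $Y$'' is false: for a $Y$-scheme $Y'$ one only has a natural morphism $X\times_Y Y'\to X\times_Z Y'$, namely the base change of the graph embedding $Y'\to Y\times_Z Y'$ (equivalently, of the diagonal of $g$), and this is an isomorphism only when $g$ is a monomorphism. (Take $X=Y'=Y$ and $Z=\pt$: then $X\times_Y Y'=Y$ while $X\times_Z Y'=Y\times Y$.) Consequently $c_{gh}$ is an operator with target $K^T_\circ(X\times_Z Y')$, not $K^T_\circ(X\times_Y Y')$, so your $\psi(c)$ is not a well-defined operator collection for the morphism $X\to Y$. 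What the reindexing $h\mapsto gh$ actually produces is the pullback $g^*(c)\in\opk^\circ_T(X\times_Z Y\to Y)$ in the sense of operation (O1), which lives over the wrong source scheme.

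The repair is exactly the content of the paper's proof. Since $g$ is smooth, the graph $\gamma\colon X\to X\times_Z Y$ of $f$ is a regular embedding (it is a section of the smooth projection $X\times_Z Y\to X$, and the diagonal $\delta\colon Y\to Y\times_Z Y$ is regularly immersed), so it carries a canonical orientation $[\gamma]$, and the correct inverse is $L(c)=[\gamma]\cdot g^*(c)$; concretely $L(c)_h=\gamma'^{\,!}\circ c_{gh}$, where $\gamma'$ is the base change of $\gamma$ along $Y'\to Y$. This inserts precisely the refined Gysin map along $X\times_Y Y'\to X\times_Z Y'$ that your construction omits. With this corrected definition, the verification you sketch --- Lemma~\ref{l.sections} to collapse $\delta^!\circ q^!$ and $(p'\circ\gamma)^!$ to the identity, axiom (A2) to move $c$ past flat pullbacks and Gysin maps of regular embeddings, and (P5) --- goes through and is essentially the paper's argument, modeled on \cite[Proposition~17.4.2]{it}.
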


\begin{proof}
Let $f$ be the morphism from $X$ to $Y$.  Form the diagram of fiber squares
\begin{diagram}
 X & \rTo^{f} & Y   \\
\dTo^{\gamma} & & \dTo_{\delta}  \\
 X \times_Z Y & \rTo^{f'} & Y \times_Z Y   &  \rTo^q  &  Y \\
\dTo^{p'}   &               & \dTo_p    &           & \dTo_g \\
X  & \rTo^{f}  & Y      &  \rTo^g        &  Z ,
\end{diagram}
where $\gamma$ is the graph of $f$, $p$ and $q$ are first and second projections, respectively, and $\delta$ is the diagonal, which is a regular embedding because $Y$ is smooth over $Z$.

Define $L\colon \opk^\circ_T(X \rightarrow Z) \rightarrow \opk^\circ_T(X \rightarrow Y)$ by
\[
L (c) = [\gamma] \cdot g^*(c).
\]
We claim that $L$ is a two-sided inverse for $\cdot [g]$.  To prove the claim, we first compute, for $c \in \opk^\circ_T(X \rightarrow Y)$,
\begin{align*}
L(c \cdot [g]) & = [\gamma] \cdot g^*(c \cdot [g]) \\
 & =  [\gamma] \cdot p^*c \cdot g^*[g]  & &\text{(P5), product and pullback commute} \\
 & =  [\gamma] \cdot p^*c \cdot [q] & & \text{(A2), for flat pullbacks} \\
 & =  \delta^* p^*c \cdot [\delta] \cdot [q] && \text{(A2), for regular embeddings} \\
 & = c \cdot [q\circ \delta] && \text{Lemma~\ref{l.sections}} \\
 & = c.
\end{align*}

Next, we compute, for $c \in \opk^\circ_T(X \rightarrow Z)$,
\begin{align*}
L(c) \cdot [g] & =  [\gamma] \cdot g^* c \cdot [g] && \text{(P1), associativity}\\
 & =  [\gamma] \cdot [p'] \cdot c && \text{(A2), flat pullbacks} \\
 & =  [p' \circ \gamma] \cdot c && \text{Lemma~\ref{l.sections}} \\
 & = c.
\end{align*}
\end{proof}

The next proposition says that Grothendieck groups of coherent sheaves appear as operational $K$-groups for projection to a point.

\begin{proposition} \label{p:smooth}
The map $\opk^\circ_T(X \rightarrow \pt) \rightarrow K_\circ^T(X)$ taking $c$ to $c_{\id}([\OO_\pt])$ is an isomorphism.
\end{proposition}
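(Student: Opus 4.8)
The plan is to construct an explicit inverse to the evaluation map $\opk^\circ_T(X \to \pt) \to K_\circ^T(X)$, $c \mapsto c_{\id}([\OO_\pt])$. Given a class $\alpha \in K_\circ^T(X)$, we must produce a collection of operators $c_g \colon K_\circ^T(Y') \to K_\circ^T(X')$ indexed by $g \colon Y' \to \pt$ — but since $Y = \pt$, any such $g$ is just $Y'$ itself (a scheme over $\kk$), and $X' = X \times Y'$, with projection $\pi \colon X' \to X$. So the data of $\alpha$ should determine, for each $Y'$, an operator $K_\circ^T(Y') \to K_\circ^T(X \times Y')$. The natural guess is an "external product with $\alpha$": send $[\shfF] \in K_\circ^T(Y')$ to $\alpha \boxtimes [\shfF] := p_X^*(\alpha) \cdot p_{Y'}^*[\shfF]$, where $p_X^*\alpha$ should be interpreted via flat pullback of coherent sheaves along $X \times Y' \to X$ and the product is the module structure of $K_\circ^T$ over $K_T^\circ$. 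More carefully: represent $\alpha$ by differences of structure-type classes; the cleanest route is to first treat $\alpha = [\shfE]$ for a $T$-equivariant coherent sheaf $\shfE$ on $X$ and define $c_{Y'}[\shfF] = [\,q_X^*\shfE \otimes^L_{\OO_{X\times Y'}} q_{Y'}^*\shfF\,]$, the derived external tensor product, which makes sense because one of the two pullbacks is flat over the relevant base.

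First I would check that these $c_{Y'}$ are well-defined homomorphisms $K_\circ^T(Y') \to K_\circ^T(X')$ — additivity on short exact sequences of $\shfF$ follows from the long exact Tor sequence, exactly as in the well-definedness remarks for $f^{\shfP_\bullet}$ in Section~\ref{s:gysin}. Next I would verify the two bivariant axioms. For (A\ref{axiom:pushforward}), given proper $h \colon Y'' \to Y'$, one needs $c_{Y'} \circ h_* = (\id_X \times h)_* \circ c_{Y''}$; this is a projection-formula / flat-base-change statement for the external product, provable by the same spectral sequence argument as in Lemma~\ref{l.push-commute} (the Künneth-type isomorphism $\shfE \boxtimes^L Rh_*\shfF \cong R(\id\times h)_*(\shfE \boxtimes^L \shfF)$). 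For (A\ref{axiom:gysin}), given $h \colon Z'' \to Z'$ flat or a regular embedding sitting over $Y$, one needs compatibility of the external product with $h^!$; since $h^!$ here acts on the $Y'$-factor and the external product is built from the $K_T^\circ$-module structure, this reduces to the fact (implicit in Lemmas~\ref{l.push-commute} and~\ref{l.commutes}) that refined Gysin maps commute with flat pullback along $X \times Y' \to Y'$ and with multiplication by classes pulled back from $X$. Having built the map $\Phi \colon K_\circ^T(X) \to \opk^\circ_T(X \to \pt)$, the composition $c \mapsto \Phi(c_{\id}[\OO_\pt])$ and $\alpha \mapsto \Phi(\alpha)_{\id}[\OO_\pt]$ must both be the identity: the latter is immediate since $c_{\pt}[\OO_\pt] = [\shfE \otimes^L \OO_\pt] = [\shfE] = \alpha$; the former requires showing any bivariant class $c$ is determined by $c_{\id}[\OO_\pt]$, i.e. that $c_{Y'}[\shfF]$ is forced for all $Y'$ and all $\shfF$.

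The main obstacle is this last surjectivity/injectivity point: showing that an arbitrary $c \in \opk^\circ_T(X\to\pt)$ agrees with $\Phi(c_{\id}[\OO_\pt])$ on every $c_{Y'}$. The strategy is to use the axioms to reduce to the case $\shfF = [\OO_{V'}]$ for a $T$-invariant subvariety $V' \subseteq Y'$ (legitimate by dévissage, Lemma~\ref{l:devissage}, together with $R(T)$-linearity), then to factor the inclusion $V' \hookrightarrow Y' \to \pt$ and compare $c_{Y'}[\OO_{V'}]$ with $c$ applied after pushing forward and pulling back through $V'$. Concretely, for a $T$-invariant subvariety $V' \to \pt$ the map $V' \to \pt$ need not be flat or a regular embedding, so one cannot directly apply (A\ref{axiom:gysin}); instead, one uses (A\ref{axiom:pushforward}) applied to the proper map $V' \to \pt$ to reduce to computing $c$ on the structure sheaf of $V'$ viewed as a point-scheme, then uses flatness of $X \times V' \to V'$ and of $X \times V' \to X$. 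I expect the argument to mirror the proof of \cite[Proposition~17.4.2]{it} for operational Chow cohomology and, more directly, the proof of Kimura-type results; the key is that the only operators a bivariant class over a point can involve are external products, because the absence of any nontrivial structure on the target forces $c_{Y'}$ to be built from the $K_\circ^T(X')$-module structure over $K_T^\circ$ applied to the universal class $c_{\id}[\OO_\pt]$.
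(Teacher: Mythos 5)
Your overall strategy is the same as the paper's: the paper's proof is a one-line reference to Fulton's \cite[Proposition~17.3.1]{it}, with the reduction to fundamental classes $[V]$ replaced by a reduction to $[\OO_V]$ via Lemma~\ref{l:devissage}, and your external-product inverse plus d\'evissage is exactly that argument spelled out. However, the final step of your injectivity argument contains a genuine error. You claim that for a $T$-invariant subvariety $V'\subseteq Y'$ the map $V'\to\pt$ ``need not be flat,'' and you propose instead to apply (A\ref{axiom:pushforward}) to ``the proper map $V'\to\pt$.'' Both halves of this are wrong: every scheme of finite type over the field $\kk$ \emph{is} flat over $\Spec\kk$ (every module over a field is free), whereas $V'\to\pt$ is generally \emph{not} proper unless $V'$ is complete, so the substitute step as written would fail. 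The correct assembly is: apply (A\ref{axiom:pushforward}) to the closed embedding $\iota\colon V'\hookrightarrow Y'$ (which is proper) to reduce $c_{Y'}(\iota_*[\OO_{V'}])$ to $\iota'_*\,c_{V'}([\OO_{V'}])$, and then apply (A\ref{axiom:gysin}) to the \emph{flat} structure map $h\colon V'\to\pt$, which gives $c_{V'}([\OO_{V'}])=c_{V'}(h^![\OO_\pt])=h^!\,c_{\id}([\OO_\pt])=\mathrm{pr}_X^*\,c_{\id}([\OO_\pt])$, i.e.\ exactly the external product with $c_{\id}([\OO_\pt])$. With that correction (and the $R(T)$-linearity you already invoke to handle the twists $\ee^u[\OO_{V'}]$ appearing in Lemma~\ref{l:devissage}), the argument closes and agrees with the paper's.
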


\begin{proof}
Similar to the proof of \cite[Proposition~17.3.1]{it}.  In the notation of that proof, the reduction to $\alpha = [V]$ is replaced by a reduction to $\alpha = [\OO_V]$, for $V$ and equivariant subvariety; this is justified by Lemma~\ref{l:devissage}.
\end{proof}

Multiplication defines a canonical map from $\opk_T^\circ(X) = \opk_T^\circ(X \xrightarrow{\id} X)$ to $\opk_T^\circ(X \to \pt)$, and composing with the  homomorphism of Proposition~\ref{p:smooth}, we get a canonical map $\opk_T^\circ(X) \to K^T_\circ(X)$.

\begin{corollary}  \label{c:smooth}
Suppose $X$ is smooth.  Then the canonical map
\[
\opk^\circ_T(X) \rightarrow K_\circ^T(X),
\]
taking $c$ to $c_{\id_X}([\OO_X])$, is an isomorphism.
\end{corollary}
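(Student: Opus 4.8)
The plan is to deduce Corollary~\ref{c:smooth} by combining Proposition~\ref{p:smooth} with Proposition~\ref{p:opk-smooth} and the smooth-case isomorphism between $K$-theory of vector bundles and coherent sheaves. The key observation is that when $X$ is smooth, the structure map $\pi\colon X \to \pt$ is itself smooth, so Proposition~\ref{p:opk-smooth} applies with $Y = X$, $g = \id_X$ replaced by... no, more precisely with the morphism $X \xrightarrow{\id} X$ in the role of $X \to Y$ and $\pi\colon X \to \pt$ in the role of $g\colon Y \to Z$. Thus
\[
\cdot\,[\pi]\colon \opk^\circ_T(X \xrightarrow{\id} X) \to \opk^\circ_T(X \to \pt)
\]
is an isomorphism. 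This is exactly the ``multiplication'' map mentioned just before the corollary statement, so the canonical map $\opk^\circ_T(X) \to \opk^\circ_T(X \to \pt)$ is an isomorphism for smooth $X$.

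Next I would compose with the isomorphism $\opk^\circ_T(X \to \pt) \xrightarrow{\sim} K^T_\circ(X)$ of Proposition~\ref{p:smooth}, which sends $c$ to $c_{\id}([\OO_\pt])$. It remains to check that the composite sends $c \in \opk^\circ_T(X)$ to $c_{\id_X}([\OO_X])$, as asserted. Unwinding the definitions: the product $c \cdot [\pi]$ has, in the notation of operation (O2), components $(c \cdot [\pi])_h = c_{h'} \circ [\pi]_h$; evaluating at the identity morphism $\pt \to \pt$ gives the operator $c_{\id_X} \circ \pi^!\colon K^T_\circ(\pt) \to K^T_\circ(X)$, and $\pi^!([\OO_\pt]) = [\pi^*\OO_\pt] = [\OO_X]$ since $\pi$ is flat. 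Hence the composite sends $c$ to $c_{\id_X}([\OO_X])$, matching the statement.

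Finally, to get the clean conclusion one may note that this also recovers the forgetful map \eqref{e:poincare}: the canonical orientation $[\id_X] \in \opk^\circ_T(X)$ is the unit, and more generally a vector bundle $\shfE$ on $X$ acts by tensoring, so the diagram relating $K^\circ_T(X) \to \opk^\circ_T(X) \to K^T_\circ(X)$ commutes with the map of Section~\ref{s:eqk}.3, which is an isomorphism for smooth $X$ by \cite[Corollary~5.8]{thomason}. But for the corollary as stated this last remark is not strictly needed.

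The only real content is already packaged in Propositions~\ref{p:smooth} and \ref{p:opk-smooth}, so the main obstacle here is purely bookkeeping: verifying that the abstractly-defined composite of the two isomorphisms coincides on the nose with the explicitly-described ``evaluation at $[\OO_X]$'' map. This amounts to chasing the definitions of the product operation (O2) and the canonical orientation $[\pi]$ through the fiber-square diagrams, together with the elementary fact that flat pullback along $\pi$ sends $[\OO_\pt]$ to $[\OO_X]$. No deeper input is required.
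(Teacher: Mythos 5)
Your proposal is correct and matches the paper's own proof: the paper likewise takes $f = \id_X$ and $g\colon X \to \pt$ in Proposition~\ref{p:opk-smooth}, then applies Proposition~\ref{p:smooth} together with $g^*([\OO_\pt]) = [\OO_X]$. Your additional unwinding of (O2) and the orientation $[\pi]$ is just a more explicit version of the same bookkeeping.
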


\begin{proof}
Take $f$ to be $\id_X$ and $g\colon X \rightarrow \pt$ in Proposition~\ref{p:opk-smooth}.  Then apply Proposition~\ref{p:smooth} and the fact that $g^*([\OO_\pt]) = [\OO_X]$.
\end{proof}

\bigskip

Finally, we prove an $\AA^1$-homotopy invariance property for operational $K$-theory.

\begin{theorem}\label{t:homotopy}
Let $\pi: E \rightarrow Y$ be an equivariant affine bundle.  Then the pullback map
\[
  \opk_T^\circ(X \to Y) \to \opk_T^\circ(X\times_Y E \to E)
\]
is an isomorphism.  In particular, there is a natural isomorphism
\[
\opk_T^\circ(X \to Y) \to \opk_T^\circ(X \times \AA^1 \rightarrow Y \times \AA^1).
\]
\end{theorem}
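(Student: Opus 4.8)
The plan is to realize the map of the theorem as the pullback operation $\pi^*$ of (O1) and to exhibit an explicit two-sided inverse, assembled from the $\AA^1$-homotopy (affine-bundle) invariance of $K^T_\circ$ recalled in Section~\ref{s:eqk}. Write $f$ for the given morphism $X\to Y$, set $\tilde X := X\times_Y E$, and let $q\colon\tilde X\to X$ and $\tilde f\colon\tilde X\to E$ be the projections; since $\pi$ is an affine bundle, so is $q$. For a morphism $g\colon Y'\to Y$ put $E':=E\times_Y Y'$, $X':=X\times_Y Y'$, and observe $\tilde X':=\tilde X\times_E E'=X'\times_{Y'}E'$; the projections $\pi'\colon E'\to Y'$ and $q'\colon\tilde X'\to X'$ are again affine bundles, hence $\pi'^*$ and $q'^*$ are isomorphisms on Grothendieck groups. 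Writing $g'\colon E'\to E$ for the projection, I would define, for $d\in\opk^\circ_T(\tilde X\to E)$,
\[
  \Phi(d)_g \ :=\ (q'^*)^{-1}\circ d_{g'}\circ\pi'^*\ \colon\ K^T_\circ(Y')\to K^T_\circ(X').
\]

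The first step is to check that $\Phi(d)$ is a genuine element of $\opk^\circ_T(X\to Y)$, i.e.\ that it satisfies axioms (A1) and (A2). This is routine: affine-bundle pullback is flat pullback, which commutes with proper pushforward and with the Gysin maps for flat morphisms and regular embeddings by Lemmas~\ref{l.push-commute} and~\ref{l.commutes}; since $d$ itself satisfies (A1) and (A2), these compatibilities propagate through the definition of $\Phi(d)$ after the obvious bookkeeping of nested fiber squares.

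Next I would verify $\Phi\circ\pi^*=\id$. Unwinding gives $\Phi(\pi^*c)_g=(q'^*)^{-1}\circ c_{\pi g'}\circ\pi'^*$, and since $\pi\circ g'=g\circ\pi'$ we have $c_{\pi g'}=c_{g\pi'}$; applying axiom (A2) for $c$ to the flat morphism $\pi'\colon E'\to Y'$ yields $c_{g\pi'}\circ\pi'^*=q'^*\circ c_g$, so the two copies of $q'^*$ cancel and $\Phi(\pi^*c)_g=c_g$. The reverse identity $\pi^*\circ\Phi=\id$ is the crux. One must show $\Phi(d)_{\pi g}=d_g$ for an \emph{arbitrary} $g\colon E'\to E$, where $\Phi(d)_{\pi g}$ is now formed from the affine bundle $\pi_W\colon W:=E\times_Y E'\to E'$ (the pullback of $\pi$ along $\pi g$, in particular smooth) and its base change over $\tilde X$. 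The graph $s:=(g,\id_{E'})\colon E'\to W$ is a section of $\pi_W$, hence a regular embedding (translating by $s$ turns the affine bundle into a vector bundle with $s$ its zero section), and $g_W\circ s=g$ where $g_W\colon W\to E$ is the other projection; likewise its base change $s'$ to $\tilde X\times_E W$ is a regular embedding section of the affine bundle $q_W\colon\tilde X\times_E W\to\tilde X'$. By Lemma~\ref{l.sections}, $s^!\circ\pi_W^*=\id$ and $s'^!\circ q_W^*=\id$, while axiom (A2) for $d$ applied to the regular embedding $s$ gives $d_g\circ s^!=s'^!\circ d_{g_W}$. Combining these three identities collapses
\[
  \Phi(d)_{\pi g}=(q_W^*)^{-1}d_{g_W}\pi_W^*=s'^!\,d_{g_W}\,\pi_W^*=d_g\,s^!\,\pi_W^*=d_g,
\]
as desired.

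The main obstacle is this last step: the several nested fiber products must be matched up carefully, and the essential non-formal ingredient is the identification of the correct section $s$ together with the verification that it (and its base change) is a regular embedding, so that axiom (A2) for $d$ and Lemma~\ref{l.sections} both apply; everything else is formal manipulation. With the isomorphism in hand, the final assertion follows by taking $E=Y\times\AA^1$ with the trivial $T$-action on $\AA^1$, and Theorem~\ref{t:intro-homotopy} is in turn the special case $X=Y$, $E=X\times\AA^1$.
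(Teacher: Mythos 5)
Your proposal is correct and takes essentially the same route as the paper: the inverse is defined exactly as your $\Phi$, namely $\alpha(c)_g = c_{\tilde g}$ conjugated by the affine-bundle isomorphisms on $K^T_\circ$. The paper stops after writing down this inverse, whereas you also carry out the verification that it is two-sided (the section/regular-embedding argument via Lemma~\ref{l.sections} for $\pi^*\circ\Phi=\id$), which is accurate and fills in what the paper leaves implicit.
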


\begin{proof}
The inverse map $\alpha\colon \opk_T^\circ(X\times_Y E \to E) \to  \opk_T^\circ(X \to Y)$ is defined as follows.  Given $g\colon Y' \to Y$, let $\tilde{g}\colon Y'\times_Y E \to E$ be the projection.  For $c\in \opk_T^\circ(X\times_Y E \to E)$, define $\alpha(c)$ by $\alpha(c)_g = c_{\tilde{g}}$, using the natural isomorphisms $K^T_\circ(Y') = K^T_\circ(Y'\times_Y E)$ and $K^T_\circ(X') = K^T_\circ(X'\times_Y E)$.
\end{proof}

\begin{corollary} \label{c:homotopy}
For any $X$, the pullback map gives a natural isomorphism $\opk^\circ_T(X) \cong \opk^\circ_T(X \times \AA^1)$.
\end{corollary}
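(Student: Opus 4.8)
The plan is to derive this as a direct specialization of Theorem~\ref{t:homotopy}. Take $E = X \times \AA^1$ with the trivial torus action on the $\AA^1$ factor and the projection $\pi \colon X \times \AA^1 \to X$, which is an equivariant affine bundle (indeed a trivial line bundle) over $Y = X$. Applying Theorem~\ref{t:homotopy} with the morphism $f = \id_X \colon X \to X$, we obtain that the pullback map
\[
  \opk_T^\circ(X \xrightarrow{\id} X) \to \opk_T^\circ(X \times_X (X \times \AA^1) \to X \times \AA^1)
\]
is an isomorphism. Since $X \times_X (X \times \AA^1) = X \times \AA^1$ and the morphism on the right is the identity of $X \times \AA^1$, the target is by definition $\opk_T^\circ(X \times \AA^1)$, and the pullback map is exactly the one induced by $\pi$. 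This gives the desired isomorphism $\opk_T^\circ(X) \cong \opk_T^\circ(X \times \AA^1)$.

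The only point requiring a word of care is the identification of the abstract bivariant pullback appearing in Theorem~\ref{t:homotopy} with the concrete ring homomorphism $\opk_T^\circ(X) \to \opk_T^\circ(X \times \AA^1)$ that one expects: the latter is $\pi^*$ in the sense of operation (O1), i.e. $(\pi^* c)_h = c_{\pi \circ h}$, and one checks that this agrees with the map constructed in the proof of Theorem~\ref{t:homotopy} under the canonical identifications $X \times_X Y' = Y'$. This is immediate from unwinding the definitions, so there is essentially no obstacle; the corollary is a formal consequence of the theorem. For the non-equivariant statement one simply takes $T$ trivial.
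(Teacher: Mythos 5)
Your proof is correct and is exactly the intended argument: the paper states the corollary immediately after Theorem~\ref{t:homotopy} with no separate proof, precisely because it is the specialization to $f=\id_X$ and $E = X\times\AA^1$ that you carry out. The identification of the bivariant pullback with $\pi^*$ in the sense of (O1) is the only point worth noting, and you handle it correctly.
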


\renewcommand{\labelenumi}{(\theenumi)}

\begin{remark}
Most of the constructions presented here work in equivariant $K$-theory for an arbitrary reductive group $G$.  However, the proof of Proposition~\ref{p:smooth} uses the fact that $K^T_\circ(X)$ is generated by classes of structure sheaves of invariant subschemes, which depends on $T$ being a torus.
\end{remark}

\begin{remark}
It would be interesting to construct a graded operational bivariant theory $\opk^*(X \to Y)$, using the (higher) algebraic $K$-theory of coherent sheaves $K_*(X)$ as the covariant component.  Operators in $\opk^i(X \to Y)$ should be homomorphisms $K_j(Y') \to K_{j+i}(X')$, for the usual fiber squares, subject to some compatibility with pullbacks by flat maps and regular embeddings.  A basic question is whether one recovers the operational $K$-theory we have defined above as the $0$th component of such a graded operational theory.
\end{remark}

\section{Computing operational $K$-theory}\label{s:kimura}

The remainder of this paper is concerned with the contravariant part of  bivariant operational $K$-theory, the $R(T)$-algebras 
\[
\opk^\circ_T(X) = \opk_T^\circ(X\xrightarrow{\id}X).
\]
These rings do not come with any natural presentation in terms of generators and relations.  Here, we develop tools for computing them inductively from the $K$-theory rings of smooth varieties using resolution of singularities, equivariant envelopes, and Corollary~\ref{c:smooth}.  The results in this section are closely analogous to those proved by Kimura for operational Chow theory in \cite{kimura}, and extended to the equivariant setting in \cite{eg-eit, chow}, but the proofs for operational $K$-theory are substantially different.  Notably, the proof of the exact sequence in Proposition~\ref{p:kim1} requires the ``dual" exact sequence for $K_\circ^T$ (Proposition~\ref{p:env-exact}), which is a descent theorem whose proof uses higher equivariant $K$-theory and simplicial schemes.

\begin{remark}
By taking care with completions with respect to the augmentation ideal of $R(T)$, one can deduce similar results with rational coefficients from Kimura's results, using the associated graded algebra of the natural filtration of $K_\circ^T(X)$ by dimension of support and the Riemann-Roch theorem for singular spaces.  Conversely, the rational coefficient versions of Kimura's results follow from ours, using the Riemann-Roch theorem.
\end{remark}

The following are analogues of Kimura's theorems for Chow cohomology \cite[Lemma~2.1, Theorem~2.3, and Theorem~3.1]{kimura}.  As in the Chow case, these statements are proved by using formal properties of bivariant theories together with Proposition~\ref{p:env-exact}.  In fact, granting Proposition~\ref{p:env-exact}, the proofs of Lemma~\ref{l:env-inj} and Propositions~\ref{p:kim1} and \ref{p:kim2} are analogous to the corresponding proofs in \cite{kimura}.

\begin{lemma}\label{l:env-inj}
If $f\colon X' \to X$ is an equivariant envelope, then $f^*\colon \opk_T^\circ(X) \to \opk_T^\circ(X')$ is injective.
\end{lemma}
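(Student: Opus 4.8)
The plan is to show that if $c \in \opk_T^\circ(X)$ pulls back to zero in $\opk_T^\circ(X')$, then $c$ is already zero as an operator on $K_\circ^T$. Since an element of $\opk_T^\circ(X)$ is a collection of operators $c_g \colon K_\circ^T(Y') \to K_\circ^T(Y')$ for maps $g\colon Y' \to X$, it suffices to show that every such $c_g$ vanishes. Replacing the situation by the fiber square over $g$ and using functoriality of pullback (P3), one reduces to proving: if $c^*$ vanishes on $\opk_T^\circ(X')$ then $c_{\id}\colon K_\circ^T(X) \to K_\circ^T(X)$ is zero — because the general $c_g$ is recovered from the restriction of $c$ to $Y'$, which is the pullback $g^* c$, and one checks $g^* c$ pulls back to zero along the envelope $X'\times_X Y' \to Y'$.

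The main step is then the following: by Lemma~\ref{l:devissage}, $K_\circ^T(X)$ is generated over $R(T)$ by classes $[\OO_V]$ for $T$-invariant subvarieties $V \subseteq X$, so it is enough to show $c_{\id}([\OO_V]) = 0$ for each such $V$. Because $f\colon X' \to X$ is an equivariant envelope, there is an invariant subvariety $V' \subseteq X'$ mapping birationally onto $V$; let $f_V\colon V' \to V$ denote this map, which is proper and birational. Now consider the inclusion $\iota\colon V \hookrightarrow X$ and use the bivariant pushforward/pullback machinery: by axiom (A\ref{axiom:pushforward}) (commutativity with proper pushforward) applied to the proper map $V' \to X'$, together with the compatibility of $c$ with the restriction of $c$ to the relevant fiber squares, one writes
\[
  c_{\id}([\OO_V]) = c_{\id}\big( (f_V)_*[\OO_{V'}]\big) + (\text{lower-dimensional correction}),
\]
where the correction term comes from the fact that $f_V$ is birational but not an isomorphism, so $(f_V)_*[\OO_{V'}] = [\OO_V] + (\text{classes supported on }f_V(\text{exceptional locus}))$, and the exceptional locus has strictly smaller dimension. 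Pushing the class $[\OO_{V'}]$ through the square for $f$ and using that $f^* c = 0$ forces the leading term $c_{\id}([\OO_V])$ to equal a sum of classes $c_{\id}([\OO_W])$ with $\dim W < \dim V$. By Noetherian induction on $\dim V$ (and the number of components), all these vanish, hence $c_{\id}([\OO_V]) = 0$.

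The step I expect to be the main obstacle is bookkeeping the "lower-dimensional correction" precisely: one must extract, from the single identity $(f_V)_* [\OO_{V'}] = [\OO_V] + (\text{smaller-dimensional classes})$ in $K_\circ^T(X)$, an honest equation among operator values that is controlled purely by $f^* c$ and the bivariant axioms, without a priori knowing $c$ respects the dimension filtration. The cleanest route is to run the argument on the level of the envelope directly: apply (A\ref{axiom:pushforward}) to the proper map $f$ itself to get $c_{\id} \circ f_* = f_* \circ (f^*c)_{\id}$, so that $f^* c = 0$ gives $c_{\id} \circ f_* = 0$ as a map $K_\circ^T(X') \to K_\circ^T(X)$; then use the surjectivity part of Proposition~\ref{p:env-exact}, which says exactly that $f_*\colon K_\circ^T(X') \to K_\circ^T(X)$ is surjective, to conclude $c_{\id} = 0$ outright. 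This collapses the inductive argument into a one-line consequence of (A\ref{axiom:pushforward}) and Proposition~\ref{p:env-exact}, and the only remaining work is the reduction of arbitrary $c_g$ to $c_{\id}$ over the base change $Y'$, where one observes that $f_{Y'}\colon X'\times_X Y' \to Y'$ is again an equivariant envelope and applies the same argument there.
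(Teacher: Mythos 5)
Your final paragraph is exactly the paper's proof: for each $g\colon Y\to X$ the base change $f'\colon Y\times_X X'\to Y$ is again an equivariant envelope, so $f'_*$ is surjective by Proposition~\ref{p:env-exact}, and Axiom (A\ref{axiom:pushforward}) gives $c_g\circ f'_* = f'_*\circ (f^*c)_{g'} = 0$, whence $c_g=0$. The inductive d\'evissage with ``lower-dimensional corrections'' in your first two paragraphs is an unnecessary detour that you correctly discard; the clean argument you end with is the right one and needs no induction.
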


\begin{proof}
Let $g\colon Y \to X$ be any equivariant map, and form the fiber square
\begin{diagram}
  Y' & \rTo^{f'} & Y \\
 \dTo^{g'} & &  \dTo_{g} \\
  X'  & \rTo^f    & X.
\end{diagram}
Then $f' \colon Y' \to Y$ is an equivariant envelope, so $f'_*$ is surjective by Proposition~\ref{p:env-exact}.  Injectivity of $f^*$ now follows from  Axiom (A1).
\end{proof}

\begin{proposition}\label{p:kim1}
Let $f\colon X' \to X$ be an equivariant envelope, and let $p_1,p_2$ be the projections $X' \times_X X' \to X'$.  The sequence
\[
  0 \to \opk_T^\circ(X) \xrightarrow{f^*} \opk_T^\circ(X') \xrightarrow{p_1^*-p_2^*} \opk_T^\circ(X' \times_X X')
\]
is exact.
\end{proposition}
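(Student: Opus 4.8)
The plan is to argue exactly as in Kimura's proof of the corresponding fact in operational Chow cohomology, replacing his Chow-theoretic envelope sequence with Proposition~\ref{p:env-exact}. Injectivity of $f^*$ is already Lemma~\ref{l:env-inj}, and it is clear that $(p_1^* - p_2^*)\circ f^* = 0$ since $f\circ p_1 = f\circ p_2$ and pullback is functorial (property (P3)). So the only real content is exactness in the middle: given $c \in \opk_T^\circ(X')$ with $p_1^* c = p_2^* c$, one must produce $\bar c \in \opk_T^\circ(X)$ with $f^* \bar c = c$.

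To construct $\bar c$, fix a morphism $g\colon Y \to X$ and consider the pullback $f'\colon Y' = Y\times_X X' \to Y$, which is again an equivariant envelope, so $f'_*\colon K_\circ^T(Y') \to K_\circ^T(Y)$ is surjective by Proposition~\ref{p:env-exact}. The operator $c$ furnishes $c_{gf'}\colon K_\circ^T(Y') \to K_\circ^T(Y')$ (here I use that $gf'$ is the natural map $Y' \to X$, since $Y' = X'\times_X Y$). I want to define $\bar c_g\colon K_\circ^T(Y) \to K_\circ^T(Y)$ by the rule $\bar c_g(f'_* \eta) = f'_*(c_{gf'}\,\eta)$ for $\eta \in K_\circ^T(Y')$. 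The key point is that this is well defined: if $f'_*\eta = 0$ then by the exact sequence in Proposition~\ref{p:env-exact} we may write $\eta = q_{1*}\zeta - q_{2*}\zeta$ where $q_1,q_2\colon Y'\times_Y Y' \to Y'$ are the two projections; but $Y'\times_Y Y' = (X'\times_X X')\times_X Y$, and the condition $p_1^* c = p_2^* c$, after pulling back along $Y \to X$ and unwinding the definition of pullback $(g^*c)_h = c_{gh}$, says precisely that $c$ commutes appropriately with $q_{1*}$ and $q_{2*}$ via axiom (A\ref{axiom:pushforward}); hence $f'_*(c_{gf'}(q_{1*}\zeta - q_{2*}\zeta)) = 0$. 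So $\bar c_g$ is forced, and surjectivity of $f'_*$ shows it is everywhere defined.

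It remains to check that the collection $(\bar c_g)$ satisfies the bivariant axioms (A\ref{axiom:pushforward}) and (A\ref{axiom:gysin}), and that $f^*\bar c = c$. The last equality is immediate from the construction once one knows $\bar c$ is a legitimate operational class: pulling back along $f$ and using that the identity section $X' \to X'\times_X X'$ splits one of the projections. The axioms for $\bar c$ follow by a diagram chase: any proper pushforward or refined Gysin map appearing in the axioms for $\bar c_g$ can be lifted through the envelope $f'$, where the corresponding compatibility holds because $c$ is an operational class, and then pushed back down using commutativity of proper pushforward and Gysin maps with $f'_*$ (Lemmas~\ref{l.push-commute} and \ref{l.commutes}, applied to the fiber squares obtained by base change of $f$); here one also uses surjectivity of the relevant envelope pushforwards to reduce arbitrary classes to images from the envelope. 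The main obstacle, as in Kimura's argument, is bookkeeping: one must carefully identify the various fiber products (notably $Y'\times_Y Y' \cong (X'\times_X X')\times_X Y$ and its triple analogue, which is needed for well-definedness and for the axiom checks) and verify that well-definedness is genuinely independent of the choice of lift $\eta$ — this is exactly the step where Proposition~\ref{p:env-exact}, rather than mere surjectivity of $f'_*$, is essential.
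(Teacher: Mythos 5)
Your argument is essentially the paper's own proof: you construct the same operator $\bar c_g(f'_*\eta) = f'_*(c_{g'}(\eta))$, check well-definedness in the same way via Proposition~\ref{p:env-exact} together with the hypothesis $p_1^*c = p_2^*c$ and Axiom~(A1), and the paper is similarly brief about verifying the bivariant axioms and the identity $f^*\bar c = c$. One small notational correction: since $c$ lies in $\opk_T^\circ(X')$, the operator you apply on $K_\circ^T(Y')$ must be indexed by the projection $g'\colon Y' \to X'$, not by the composite $Y' \to X$ as written.
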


\begin{proof}
We have already seen that $f^*$ is injective (Lemma~\ref{l:env-inj}), and $p_1^*f^*=p_2^*f^*$ by functoriality.  Given $c'\in \opk_T^\circ(X')$ such that $p_1^*c'=p_2^*c'$, it remains to find $c\in\opk_T^\circ(X)$ such that $c' = f^*c$.

Let $g\colon Y \to X$ be an equivariant map, and let $Y' = Y \times_X X'$, so $f'\colon Y' \to Y$ is also an envelope.  By Proposition~\ref{p:env-exact}, given $\alpha\in K^T_\circ(Y)$ we can find $\alpha'\in K^T_\circ(Y')$ with $\alpha=f'_*\alpha'$.  Now define $c$ by
\[
  c_g(\alpha) = f'_*(c'_{g'} (\alpha') ).
\]
To see that this is independent of the choice of $\alpha'$, it suffices to show that $f'_*(\beta') = 0$ implies $f'_*(c'_{g'}(\beta'))=0$.  By Proposition~\ref{p:env-exact}, find $\beta'' \in K^T_\circ(Y'\times_Y Y')$ such that $\beta' = p'_{1*}\beta'' - p'_{2*}\beta''$, where $p_i'\colon Y'\times_Y Y' \to Y$ are the projections.  Note that $p_1^*c'=p_2^*c'$ implies $(p'_1)^*(g')^*c' = (p'_1)^*(g')^*c'$.  Now compute:
\begin{align*}
  f'_*(c'_{g'}(\beta')) &= f'_*(c'_{g'}(p'_{1*}\beta'' - p'_{2*}\beta'')) \\
                        &= f'_*(p'_{1*}c'_{g'\circ p'_1}(\beta'') - p'_{2*}c'_{g'\circ p'_1}(\beta'')) \\
                        &= (f'\circ p_1)_*( (p'_1)^*(g')^*c'(\beta'') - (p'_2)^*(g')^*c'(\beta'') ) \\
                        &= 0.
\end{align*}

Finally, $c' = f^*c$, essentially by definition.  
\end{proof}

\begin{proposition}\label{p:kim2}
Suppose $f\colon X' \to X$ is a birational equivariant envelope, restricting to an isomorphism over an invariant open $U\subseteq X$.  Let $S_i$ be the irreducible components of $X\setminus U$, and write $f_i\colon E_i = f^{-1}S_i \to S_i$.  A class $c'\in \opk_T^\circ(X')$ lies in the image of $f^*$ if and only if the restriction $c'|_{E_i}$ lies in the image of $f_i^*$ for all $i$.
\end{proposition}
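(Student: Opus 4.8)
The "only if" direction is immediate: if $c' = f^*c$, then because $E_i = f^{-1}(S_i)$ fits into the fiber square with $S_i \hookrightarrow X$, contravariant functoriality of $\opk_T^\circ$ gives $c'|_{E_i} = (f\circ\iota_{E_i})^*c = (\iota_{S_i}\circ f_i)^*c = f_i^*(c|_{S_i})$, which lies in the image of $f_i^*$. For the converse, the plan is to combine Proposition~\ref{p:kim1} with the injectivity of pullback along envelopes (Lemma~\ref{l:env-inj}).

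Set $W := X' \times_X X'$, with projections $p_1, p_2 \colon W \to X'$. By Proposition~\ref{p:kim1}, $c'$ lies in the image of $f^*$ if and only if $d := p_1^*c' - p_2^*c'$ vanishes in $\opk_T^\circ(W)$, so it suffices to prove $d = 0$. I would first record the relevant geometry of $W$: the diagonal $\delta\colon X' \hookrightarrow W$ is a closed embedding ($X'$ is separated) and a section of both $p_i$, and each $W_i := E_i \times_{S_i} E_i$ embeds in $W$ as a $T$-invariant closed subscheme (here one uses that $S_i \hookrightarrow X$ is a monomorphism). Set-theoretically $W = \delta(X') \cup \bigcup_i W_i$: a point of $W$ over $U$ is forced to be a diagonal point because $f$ is an isomorphism over $U$, while a point over $S_i$ automatically lies in $W_i$. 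Since every $T$-invariant subvariety of $W$ is irreducible, it is contained in $\delta(X')$ or in some $W_i$; hence the proper map
\[
 X' \;\sqcup\; \coprod_i W_i \;\longrightarrow\; W ,
\]
given by $\delta$ on the first piece and the closed embeddings on the others, is an equivariant envelope, so by Lemma~\ref{l:env-inj} pullback along it is injective. It therefore remains to show that $d$ restricts to $0$ on $\delta(X')$ and on each $W_i$.

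The restriction to $\delta(X')$ is $\delta^*d = (p_1\delta)^*c' - (p_2\delta)^*c' = c' - c' = 0$. For $W_i$, write $q_1, q_2\colon W_i = E_i \times_{S_i} E_i \to E_i$ for the two projections; functoriality gives $d|_{W_i} = q_1^*(c'|_{E_i}) - q_2^*(c'|_{E_i})$, and if $c'|_{E_i} = f_i^*e_i$ then both terms equal $(f_i q_j)^*e_i$, which coincide since $f_i q_1 = f_i q_2$ by the definition of the fiber product over $S_i$. This is the only point at which the hypothesis enters. Hence $d = 0$, so $p_1^*c' = p_2^*c'$, and Proposition~\ref{p:kim1} yields $c' \in \im f^*$. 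I expect the only genuinely delicate step to be the identification of the right envelope of $X' \times_X X'$: the diagonal alone does not surject onto $W$, and it is precisely the stratumwise fiber products $E_i \times_{S_i} E_i$ that fill in the locus over $X \setminus U$; granting Proposition~\ref{p:kim1} and Lemma~\ref{l:env-inj}, the remainder of the argument is formal.
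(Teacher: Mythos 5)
Your proof is correct and follows essentially the same route as the paper: reduce via Proposition~\ref{p:kim1} to showing $p_1^*c'=p_2^*c'$ on $X'\times_X X'$, and then exploit the fact that every invariant subvariety of $X'\times_X X'$ either factors through the diagonal or lies in some $E_i\times_{S_i}E_i$. The only difference is cosmetic: you package this case analysis as an envelope $X'\sqcup\coprod_i W_i \to X'\times_X X'$ and invoke Lemma~\ref{l:env-inj}, whereas the paper verifies the operator identity directly on the generators $[\OO_V]$ via Lemma~\ref{l:devissage} and Axiom (A1); both are valid.
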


\begin{proof}
The ``only if'' direction is obvious.  For the other direction, assume $c'\in \opk_T^\circ(X')$ satisfies the restriction hypothesis, so $c'|_{E_i}$ lies in the image of $f_i^*$ for all $i$.  By Proposition~\ref{p:kim1}, it will suffice to show that $p_1^*c'=p_2^*c'$ in $\opk_T^\circ(X' \times_X X')$.

For any equivariant map $g\colon Y \to X'\times_X X'$ and any element $\xi \in K^T_\circ(Y)$, we must show
\begin{equation}\label{p1p2}
  (p_1^*c')_g(\xi) = (p_2^*c')_g(\xi).
\end{equation}
It suffices to do this for $\xi = [\OO_V]$, for $V\subseteq Y$ an invariant subvariety, because these classes generate $K^T_\circ(Y)$ as an $R(T)$-module.  In fact, to check \eqref{p1p2} for $\xi=[\OO_V]$, we may replace $Y$ with $V$, using Axiom (A1).

If the image of $V$ in $X$ is contained in some $S_i$, we have
\begin{align*}
  (p_1^*c')_g(\xi) &= ((p_1|_{E_i\times_X E_i})^*(c'|_{E_i}))_g(\xi) \\
                       &= ((p_1|_{E_i\times_X E_i})^*(f_i^*c_i)_g(\xi) \\
                       &= (f_i\circ(p_1|_{E_i\times_X E_i}))^*c_i)_g(\xi).
\end{align*}
Noting that $f_i\circ(p_1|_{E_i\times_X E_i}) = f_i\circ(p_2|_{E_i\times_X E_i})$, we obtain $(p_1^*c')_g(\xi)=(p_2^*c')_g(\xi)$.

If the image of $V$ in $X$ is not contained in any $S_i$, then $V$ factors through the diagonal in $X' \times_X X'$.  Now $(p_1^*c')_g(\xi) = c'_{p_1\circ g}(\xi) = c'_{p_2\circ g}(\xi) = (p_2^*c')_g(\xi)$.
\end{proof}

\begin{remark}
The proposition can be rephrased as saying the sequence
\begin{equation}\label{e:kim-no}
  0 \to \opk_T^\circ(X) \to \opk_T^\circ (X') \oplus \bigoplus_i\opk_T^\circ (S_i) \to \bigoplus_i \opk_T^\circ (E_i)
\end{equation}
is exact.  It follows that the sequence
\begin{equation}\label{e:kim3}
  0 \to \opk_T^\circ(X) \to \opk_T^\circ (X') \oplus \opk_T^\circ (S) \to  \opk_T^\circ (E)
\end{equation}
is also exact, where $S = X\setminus U$ and $E=f^{-1}S$.  (By Lemma~\ref{l:env-inj}, the maps $\opk_T^\circ S \to \bigoplus \opk_T^\circ S_i$ and $\opk_T^\circ E \to \bigoplus \opk_T^\circ E_i$ are injective.)  Since the fiber square
\begin{diagram}
  E & \rInto & X' \\
 \dTo & &  \dTo_{f} \\
  S  & \rInto  & X.
\end{diagram}
is an abstract blowup diagram, \eqref{e:kim3} says that $\opk_T^\circ$ is a sheaf in the {\em cdh} topology.  This formulation is useful for comparing with other theories satisfying {\em cdh}-descent, such as $KH$-theory.  (See \cite{friedlander} for an introduction to this topology.)  
In fact, in categories where smooth envelopes exist---for example, when the base field has characteristic zero---Theorem~\ref{t.kh} will imply that $\opk_T^\circ$ is the {\em cdh}-sheafification of $K_T^\circ$.
\end{remark}

\begin{corollary}\label{c:opKtrivial}
Suppose $T=T_1\times T_2$ acts on $X$ such that the action of $T_1$ is trivial.  Assume there exists an equivariant envelope $f\colon X' \to X$, where $X'$ is smooth and $f$ is an isomorphism over a dense open set in $X$.  Then there is a canonical isomorphism $\opk_T^\circ(X) = R(T_1)\otimes \opk_{T_2}^\circ(X)$.
\end{corollary}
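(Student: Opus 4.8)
The plan is to bootstrap the claim from the two ingredients already in hand: the splitting of $K_\circ^T$ proved as Lemma~\ref{l:Ktrivial}, together with the resolution exact sequences of Propositions~\ref{p:kim1} and \ref{p:kim2}, which let us compute $\opk^\circ_T$ of a singular space from $\opk^\circ_T$ of a smooth resolution and of lower-dimensional strata. First I would observe that when $X$ is \emph{smooth}, Corollary~\ref{c:smooth} identifies $\opk^\circ_T(X)$ with $K_\circ^T(X)$, which by Lemma~\ref{l:Ktrivial} is canonically $R(T_1)\otimes K^{T_2}_\circ(X) = R(T_1)\otimes \opk^\circ_{T_2}(X)$; so the statement holds on the smooth locus of the induction.

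The inductive step would proceed by Noetherian induction on $X$, using the hypothesized envelope $f\colon X'\to X$ with $X'$ smooth and $f$ an isomorphism over a dense open $U\subseteq X$. Write $X\setminus U = \bigcup_i S_i$ with $E_i = f^{-1}(S_i)$. Each $S_i$ is a $T$-invariant closed subvariety on which $T_1$ still acts trivially, and it has strictly smaller dimension; moreover $f_i\colon E_i\to S_i$ is again a birational envelope, and $E_i$ (a closed subvariety of the smooth $X'$) need not be smooth, but the $S_i$ do inherit resolutions by pulling back a resolution of singularities of $S_i$ — so the inductive hypothesis applies to each $S_i$, and an analogous induction (now on $E_i$, which fibers over $S_i$ via a birational envelope) gives the splitting for the terms $\opk^\circ_T(E_i)$ as well. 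One then invokes the exact sequence \eqref{e:kim2}:
\[
  0 \to \opk_T^\circ(X) \to \opk_T^\circ (X') \oplus \bigoplus_i\opk_T^\circ (S_i) \to \bigoplus_i \opk_T^\circ (E_i).
\]
Since $R(T_1)$ is free as a $\ZZ$-module, tensoring the corresponding sequence for the $T_2$-action
\[
  0 \to \opk_{T_2}^\circ(X) \to \opk_{T_2}^\circ (X') \oplus \bigoplus_i\opk_{T_2}^\circ (S_i) \to \bigoplus_i \opk_{T_2}^\circ (E_i)
\]
with $R(T_1)$ preserves exactness; and the canonical splittings on $X'$, on each $S_i$, and on each $E_i$ (already established, since all of these have smaller dimension or are smooth) are compatible with the maps $f^*$, $p_i^*$, and the restriction maps in the sequence, because all of those are pullbacks along $T$-equivariant morphisms and the splitting is natural in $X$ for such morphisms. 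A diagram chase (the five lemma, or just exactness of the rows and the fact that the two middle and right vertical maps are isomorphisms) then forces $\opk^\circ_T(X) \cong R(T_1)\otimes\opk^\circ_{T_2}(X)$, and canonicity follows from the naturality used throughout.

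The main obstacle I anticipate is bookkeeping the naturality precisely enough that the isomorphisms on the smooth/smaller pieces genuinely assemble into a map of exact sequences — in particular, checking that the splitting $\opk^\circ_T(Z) = R(T_1)\otimes\opk^\circ_{T_2}(Z)$ constructed by this induction does not depend on the chosen envelope, so that $f^*$, $p_1^*-p_2^*$, and the restrictions to the $E_i$ are all morphisms of $R(T_1)$-modules intertwining the two descriptions. Resolving this cleanly amounts to phrasing the splitting on smooth schemes (via Corollary~\ref{c:smooth} and Lemma~\ref{l:Ktrivial}) in a manifestly functorial way, and then noting that Propositions~\ref{p:kim1} and \ref{p:kim2} characterize $\opk^\circ_T(X)$ as a limit of the smooth data, so any two such limits agree canonically. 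A secondary point to verify is that the envelopes $S_i$ and $E_i$ needed to run the induction genuinely exist with the required smoothness — this is where one uses resolution of singularities (hence the standing characteristic-zero or toric hypotheses under which such envelopes are available), applied to $S_i$ and then pulled back along $f_i$.
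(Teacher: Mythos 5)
Your proposal is essentially the paper's proof: the smooth case via Corollary~\ref{c:smooth} and Lemma~\ref{l:Ktrivial}, followed by noetherian induction using the exact sequence \eqref{e:kim2} of Proposition~\ref{p:kim2} to identify $\opk_T^\circ(X)$ and $R(T_1)\otimes\opk_{T_2}^\circ(X)$ as kernels of isomorphic maps.

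One point you gloss over: applying the smooth case to $X'$ requires that $T_1$ act trivially on $X'$, which is not among the hypotheses (only the action on $X$ is assumed trivial, and \emph{a priori} $T_1$ could act nontrivially on components of $X'$ not dominating $X$). The paper handles this by replacing the given $T$-action on $X'$ with the one where $T_1$ acts trivially and $T_2$ acts as before; this changes nothing $T_2$-equivariantly and makes Lemma~\ref{l:Ktrivial} applicable to $X'$. Your other caveat --- that the induction needs the envelope hypothesis to propagate to the $S_i$ and $E_i$ --- is legitimate and is implicitly assumed in the paper as well; it is satisfied in the intended applications (characteristic zero, or toric varieties).
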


The proof is similar to that of \cite[Theorem~2]{eg-eit}, which implies the corresponding result for equivariant Chow cohomology.

\begin{proof}
If $X$ is smooth, this follows from Lemma~\ref{l:Ktrivial} and Corollary~\ref{c:smooth}.  For the general case, we may assume $T_1$ acts trivially on $X'$, so $\opk_T^\circ(X') = R(T_1)\otimes \opk_{T_2}^\circ(X')$.  Indeed, given any $X' \to X$ as in the hypothesis, consider this a $T_2$-equivariant map by restricting the action, and define a new $T$-action on $X'$ by letting $T_1$ act trivially.

Now let $U\subseteq X$ be an invariant open such that $f^{-1}U \to U$ is an isomorphism, and let $S = X \setminus U$ be the complement.  By noetherian induction, we assume $\opk_T^\circ(S) = R(T_1)\otimes \opk_{T_2}^\circ(S)$ and $\opk_T^\circ(E) = R(T_1)\otimes \opk_{T_2}^\circ(E)$, where $E=f^{-1}S$.  Using Proposition~\ref{p:kim2}, the same conditions characterize the images of $\opk_T^\circ(X)$ and $R(T_1)\otimes \opk_{T_2}^\circ(X)$ in $\opk_T^\circ(X')$.  In other words, the kernels of the horizontal arrows in the diagram
\begin{diagram}
\opk_T^\circ (X') \oplus \opk_T^\circ (S) & \rTo & \opk_T^\circ (E) \\ 
 \|  & & \| \\
\left( R(T_1) \otimes \opk_{T_2}^\circ (X') \right) \oplus  \left( R(T_1)\otimes \opk_{T_2}^\circ (S) \right) & \rTo  &  R(T_1)\otimes \opk_{T_2}^\circ (E)
\end{diagram}
are isomorphic, and these kernels are $\opk_T^\circ(X)$ and $R(T_1)\otimes \opk_{T_2}^\circ(X)$, respectively.
\end{proof}

Now we discuss more directly the relationship between operational $K$-theory, the usual $K$-theory of perfect complexes, and Weibel's homotopy $K$-theory.

\begin{theorem}\label{t:smooth}
For any $X$, there is a canonical homomorphism $K_T^\circ(X) \to \opk_T^\circ(X)$ of $R(T)$-algebras, sending a class $\alpha$ to the operator $[\alpha]$ which acts by $[\alpha]_g = g^*\alpha \cdot \xi$, for any $g\colon Y \to X$ and $\xi\in K^T_\circ(Y)$.  Together with the canonical map $\opk_T^\circ(X) \to K^T_\circ(X)$ of Proposition~\ref{p:smooth}, this factors the canonical homomorphism $K_T^\circ(X) \to K^T_\circ(X)$.

If $X$ is smooth, the homomorphisms $K_T^\circ(X) \to \opk_T^\circ(X) \to K^T_\circ(X)$ are all isomorphisms of $R(T)$-modules.
\end{theorem}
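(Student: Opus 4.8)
The plan is to construct the $R(T)$-algebra homomorphism $K_T^\circ(X) \to \opk_T^\circ(X)$ directly, then verify it factors the forgetful map, and finally deduce the isomorphism statement in the smooth case from results already in hand. For the first part, given $\alpha \in K_T^\circ(X)$ and a morphism $g\colon Y \to X$, I would define the operator $[\alpha]_g \colon K_\circ^T(Y) \to K_\circ^T(Y)$ by $\xi \mapsto g^*\alpha \cdot \xi$, using the $K_T^\circ(Y)$-module structure on $K_\circ^T(Y)$ together with pullback of perfect complexes. The work is to check that this collection of operators satisfies the bivariant axioms (A1) and (A2). Axiom (A1), compatibility with proper pushforward $h_*$, is the projection formula in $K$-theory: $h_*(h^*\beta \cdot \eta) = \beta \cdot h_*\eta$ for $\beta$ a perfect complex class and $\eta$ a coherent class, applied with $\beta = g^*\alpha$ pulled back appropriately. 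Axiom (A2), compatibility with Gysin maps $h^!$ for flat morphisms and regular embeddings, follows from the fact that such $h^!$ are $K_T^\circ$-linear with respect to $h^*$ on the cohomology side — i.e. $h^!(\gamma \cdot \xi) = h^*\gamma \cdot h^!\xi$; this is a standard compatibility of refined Gysin maps with the module structure, provable via the Tor-sheaf formula and the fact that flat or regular-embedding base change interacts well with derived tensor product (cf. the isomorphisms from \cite{ks} used in Section~\ref{s:gysin}). That this assignment is a ring homomorphism is immediate from associativity of the product and functoriality of $g^*$; $R(T)$-linearity is similarly formal.

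For the factorization claim, I would chase definitions: the composite $K_T^\circ(X) \to \opk_T^\circ(X) \to K_\circ^T(X)$ sends $\alpha$ to $[\alpha]_{\id}([\OO_X]) = \id^*\alpha \cdot [\OO_X] = \alpha \cdot [\OO_X]$, which is exactly the image of $\alpha$ under the forgetful map $K_T^\circ(X) \to K_\circ^T(X)$ of \eqref{e:poincare}, since that map is (by definition) multiplication of $[\OO_X]$ by $\alpha$ in the $K_T^\circ(X)$-module $K_\circ^T(X)$.

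For the smooth case, the composite $K_T^\circ(X) \to \opk_T^\circ(X) \to K_\circ^T(X)$ is the forgetful map, which is an isomorphism when $X$ is smooth by \cite[Corollary~5.8]{thomason} (cited as \eqref{e:poincare} above). Meanwhile, the second map $\opk_T^\circ(X) \to K_\circ^T(X)$ is itself an isomorphism for smooth $X$ by Corollary~\ref{c:smooth}. Two-out-of-three then forces the first map $K_T^\circ(X) \to \opk_T^\circ(X)$ to be an isomorphism as well, and all three maps are isomorphisms of $R(T)$-modules (indeed of $R(T)$-algebras). I expect the main obstacle to be the verification of axiom (A2): one must confirm that refined Gysin maps for flat morphisms and regular embeddings genuinely commute with multiplication by pulled-back perfect complex classes in the precise generality of the diagrams appearing in the definition of $\opk_T^\circ$ — this is where the careful bookkeeping with Tor sheaves and base change from Section~\ref{s:gysin} is needed, and it is the only step that is not purely formal manipulation of bivariant operations.
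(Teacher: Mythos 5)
Your proposal is correct and follows essentially the same route as the paper: the verification of axioms (A1) and (A2) via the projection formula and the compatibility $h^!(\gamma\cdot\xi)=h^*\gamma\cdot h^!\xi$ is exactly the content of Lemmas~\ref{l.push-commute} and~\ref{l.commutes} specialized to $f=\id$ (a closed embedding), which is how the paper phrases it, and your factorization computation and two-out-of-three argument for the smooth case coincide with the paper's.
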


\begin{proof}
To see that the homomorphism $K_T^\circ(X) \to \opk_T^\circ(X)$ is well defined, we must check that for any $\alpha\in K_T^\circ(X)$ the operator $[\alpha]$ satisfies Axioms (A1) and (A2); that is, it commutes with  pushforward for proper maps and pullback for flat maps and regular embeddings.  These all follow from the Lemmas~\ref{l.push-commute} and \ref{l.commutes}, with $X = Y$ and $f = \id$, since the identity is a closed embedding.

That the canonical map $K_T^\circ(X) \to K^T_\circ(X)$ factors as claimed is easily seen from the definitions.  When $X$ is smooth, the homomorphisms $K_T^\circ(X) \to K^T_\circ(X)$ and $\opk_T^\circ(X) \to K^T_\circ(X)$ are isomorphisms (using Corollary~\ref{c:smooth} for the latter), so it follows that $K_T^\circ(X) \to \opk_T^\circ(X)$ is an isomorphism, as well.
\end{proof}

\begin{theorem}\label{t.kh}
Assume the base field has characteristic zero.  Let $L_T^\circ$ be any contravariant functor from $T$-schemes to groups that admits a natural transformation $\eta$ to $K_T^\circ$ when restricted to smooth schemes.  Then $\eta$ extends uniquely to a natural transformation from $L_T^\circ$ to $\opk_T^\circ$.
\end{theorem}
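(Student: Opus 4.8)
The plan is to construct the extension by induction on $\dim X$, using that in characteristic zero every $X$ admits an equivariant resolution of singularities, hence a smooth birational equivariant envelope, together with the presentation of $\opk_T^\circ(X)$ provided by Proposition~\ref{p:kim2}. Throughout I would reduce to the case $X$ reduced: since $K_\circ^T$ is insensitive to nilpotents (d\'evissage) and any morphism from a reduced scheme to $X$ factors through $X_{\mathrm{red}}$, Axiom~(A1) for the closed embeddings $Y'_{\mathrm{red}}\hookrightarrow Y'$ identifies $\opk_T^\circ(X)$ with $\opk_T^\circ(X_{\mathrm{red}})$. For smooth $X$, I interpret $\eta_X$ as the given $\eta$ followed by the inverse of the isomorphism $\opk_T^\circ(X)\cong K_T^\circ(X)$ of Theorem~\ref{t:smooth}. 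Uniqueness is then immediate: if $\bar\eta$ is any natural extension and $\pi\colon X'\to X$ is a resolution, naturality along $\pi$ forces $\pi^*\bar\eta_X=\bar\eta_{X'}\circ L_T^\circ(\pi)=\eta_{X'}\circ L_T^\circ(\pi)$, and $\pi^*$ is injective by Lemma~\ref{l:env-inj}, so $\bar\eta_X$ is determined.

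For existence I would induct on $d=\dim X$, carrying the hypothesis that $\eta$ has been defined on every scheme that is smooth or of dimension $<d$, compatibly with the given $\eta$ on smooth schemes and naturally for all morphisms between schemes of this kind. Let $X$ be reduced of dimension $d$ and not smooth, and fix a birational equivariant resolution $\pi\colon X'\to X$, an isomorphism over a dense invariant open $U\subseteq X$; let $S_i$ be the components of $X\setminus U$ and $E_i=\pi^{-1}(S_i)$, so $\dim S_i,\dim E_i<d$. By Proposition~\ref{p:kim2}, $\opk_T^\circ(X)$ is the kernel of
\[
  \opk_T^\circ(X')\oplus\bigoplus_i\opk_T^\circ(S_i)\longrightarrow\bigoplus_i\opk_T^\circ(E_i).
\]
I would send $\alpha\in L_T^\circ(X)$ to $\big(\eta_{X'}(L_T^\circ(\pi)\alpha),\,(\eta_{S_i}(L_T^\circ(j_i)\alpha))_i\big)$, where $j_i\colon S_i\hookrightarrow X$; since $E_i\hookrightarrow X'\xrightarrow{\pi}X$ and $E_i\to S_i\xrightarrow{j_i}X$ are the same morphism, naturality of the inductively defined $\eta$ along $E_i\hookrightarrow X'$ and $E_i\to S_i$ places this tuple in the kernel. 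This defines $\eta_X\colon L_T^\circ(X)\to\opk_T^\circ(X)$, characterized by $\pi^*\circ\eta_X=\eta_{X'}\circ L_T^\circ(\pi)$; taking $\pi=\id$ when $X$ is smooth shows consistency with the previous clause.

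The substantive point is to verify that this extended $\eta$ is natural. Given $h\colon Z\to X$ with $Z$ in the enlarged collection, I would first replace $Z$ by its chosen resolution $\sigma\colon Z'\to Z$: as $\sigma^*$ is injective and $\sigma^*\circ\eta_Z=\eta_{Z'}\circ L_T^\circ(\sigma)$, it suffices to treat $h\circ\sigma$, so I may assume $Z$ is smooth. Then I would let $P$ be an equivariant resolution of $Z\times_X X'$. Since envelopes are stable under base change (cf.\ the proof of Lemma~\ref{l:env-inj}) and closed under composition, the projection $t\colon P\to Z$ is an equivariant envelope, so $t^*$ is injective, while $s\colon P\to X'$ is a morphism of smooth schemes with $\pi s=ht$. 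The chase
\[
  t^*h^*\eta_X=s^*\pi^*\eta_X=s^*\eta_{X'}L_T^\circ(\pi)=\eta_P L_T^\circ(s)L_T^\circ(\pi)=\eta_P L_T^\circ(t)L_T^\circ(h)=t^*\eta_Z L_T^\circ(h)
\]
— using functoriality of pullback, the defining property of $\eta_X$, and naturality of the given $\eta$ for the smooth morphisms $s$ and $t$ — then yields $h^*\eta_X=\eta_Z\circ L_T^\circ(h)$ after cancelling the injective $t^*$.

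The main obstacle will be precisely this naturality verification: organizing the induction so that exactly the right naturality statements are legitimately available at each stage, and, above all, producing for an arbitrary morphism $h\colon Z\to X$ a single \emph{smooth} scheme $P$ mapping compatibly to a resolution of $X$ and to an envelope over $Z$, which is what allows the hypothesized naturality of $\eta$ on smooth schemes to be exploited on both legs. It is here that the characteristic zero hypothesis (resolution of singularities) and the base-change stability of equivariant envelopes enter essentially.
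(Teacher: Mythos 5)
Your proposal is correct and takes essentially the same route as the paper's proof: induction on dimension, a smooth birational equivariant envelope obtained from resolution of singularities, and the exact sequence of Proposition~\ref{p:kim2} to place the constructed class in $\opk_T^\circ(X)$ — indeed you supply the uniqueness argument and the naturality verification that the paper only asserts. The one small imprecision is that the auxiliary scheme $P$ dominating $Z\times_X X'$ must be a smooth equivariant \emph{envelope} (which exists in characteristic zero by the usual noetherian induction), not merely a resolution of singularities, since otherwise $P\to Z$ need not be an envelope and $t^*$ need not be injective.
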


Characteristic zero is used only to guarantee the existence of a suitable smooth envelope, and the proof goes through whenever such envelopes exist.  In particular, the theorem holds for toric varieties over an arbitrary base field.

\begin{proof}
When $\dim X$ is zero, then $X$ is smooth and hence the natural map exists by hypothesis.  Proceed by induction on dimension.  Use resolution of singularities to construct a birational equivariant envelope $X' \to X$, with $X'$ smooth, and with the exceptional locus a simple normal crossings divisor.  Let $E$ and $S$ be as in the sequence \eqref{e:kim3}.  In the diagram
\begin{diagram}
0 & \rTo & \opk_T^\circ(X) & \rTo & \opk_T^\circ (X') \oplus \opk_T^\circ (S) & \rTo &  \opk_T^\circ (E) \\ 
  &      &   \uDashto &       &      \uTo     &    &  \uTo  \\
  &    &   L_T^\circ(X)   &  \rTo & L_T^\circ (X') \oplus  L_T^\circ (S) & \rTo  &  L_T^\circ (E) ,
\end{diagram}
the top row is exact, the rightmost vertical arrow exists by the induction hypothesis, and the middle vertical arrow exists by induction (for the $S$ factor) and the smooth case (for the $X'$ factor).  Since the composition of the two horizontal arrows in the bottom row is zero, the image of $L_T^\circ(X)$ in $\opk_T^\circ (X') \oplus \opk_T^\circ (S)$ lies in the kernel of the map to $\opk_T^\circ (E)$, which is $\opk_T^\circ(X)$.  This procedure constructs a natural and functorial map $L_T^\circ(X) \rightarrow \opk_T^\circ(X)$, for arbitrary $X$, as required.
\end{proof}

\begin{corollary}\label{c.kh}
Assume the base field has characteristic zero.  Then there is a natural  map $\theta\colon KH^\circ(X) \rightarrow \opk^\circ(X)$.
\end{corollary}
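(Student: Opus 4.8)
The plan is to apply Theorem~\ref{t.kh} with the trivial torus and with $L_T^\circ = KH_\circ$, the degree-zero part of Weibel's homotopy $K$-theory. First I would recall that $KH_\circ$ is a contravariant functor from schemes to abelian groups: the homotopy $K$-theory spectrum $KH(X)$ is functorial for arbitrary morphisms (see \cite{weibelKH} and \cite[\S IV.12]{weibel}), and $KH_\circ(X)$ is its $\pi_0$. So $KH_\circ$ meets the hypotheses imposed on $L_T^\circ$ in Theorem~\ref{t.kh}, provided we produce a natural transformation to $K^\circ$ over the subcategory of smooth schemes.

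For that natural transformation, I would use the canonical comparison map $K^\circ(X) \to KH_\circ(X)$ coming from the $K$-theory of perfect complexes. When $X$ is smooth it is regular, so the $K$-theory of perfect complexes is already $\AA^1$-homotopy invariant, and this comparison map is an isomorphism; inverting it yields the required natural transformation $\eta\colon KH_\circ \to K^\circ$ on smooth schemes. Theorem~\ref{t.kh} then extends $\eta$ uniquely to a natural transformation $KH_\circ \to \opk^\circ$ on all schemes over the (characteristic zero) base field, which is exactly the asserted natural map $KH_\circ(X) \to \opk^\circ(X)$. The characteristic-zero assumption enters only through Theorem~\ref{t.kh}, where it supplies the smooth birational envelopes with simple normal crossings exceptional locus used in the inductive construction; in particular the argument applies verbatim to toric varieties over an arbitrary field.

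There is essentially no hard step here; the only point deserving a word of care is the verification that $K^\circ(X) \to KH_\circ(X)$ is an isomorphism on smooth schemes, so that $\eta$ runs in the direction required by Theorem~\ref{t.kh}. This is standard, following from homotopy invariance of $K$-theory on regular schemes. I would also remark that, by construction, the resulting arrow is the middle map in the chain $K^\circ(X) \to KH_\circ(X) \to \opk^\circ(X) \to K_\circ(X)$ of the introduction: precomposing with $K^\circ(X) \to KH_\circ(X)$ recovers the map $K^\circ(X) \to \opk^\circ(X)$ of Theorem~\ref{t:smooth}, and postcomposing with $\opk^\circ(X) \to K_\circ(X)$ of Proposition~\ref{p:smooth} gives the evident forgetful map.
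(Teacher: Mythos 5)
Your proposal is correct and follows exactly the paper's own argument: apply Theorem~\ref{t.kh} to $L^\circ = KH_\circ$, using that $K^\circ(X)\to KH_\circ(X)$ is an isomorphism on smooth (regular) schemes and inverting it to get the required natural transformation on the smooth subcategory. The extra remarks about functoriality of $KH$ and compatibility with the chain of maps in the introduction are accurate but not needed beyond what the paper records.
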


As in Theorem~\ref{t.kh}, the characteristic zero hypothesis can be replaced by an assumption that birational smooth envelopes exist, so in particular the same statement holds for toric varieties over a base field of arbitrary characteristic.

\begin{proof}
For smooth schemes $X$, the natural map from $K^\circ(X)$ to $KH^\circ(X)$ is an isomorphism, and its inverse provides the natural transformation required to apply Theorem~\ref{t.kh}.
\end{proof}

\begin{remark}
It would be interesting to develop an equivariant version of homotopy $K$-theory, with formal properties similar to those of $KH^*$; to our knowledge, this has not been explored.  One application would be an equivariant version of Corollary~\ref{c.kh}.
\end{remark}

\begin{remark}\label{r.kan}
In more abstract language, Theorem~\ref{t.kh} may be phrased simply as follows: the functor $\opk_T^\circ$ from the category of (separated and finite-type over $\kk$) $T$-schemes to groups is naturally isomorphic to the Kan extension of the functor $K_T^\circ$ from smooth $T$-schemes to groups, along the inclusion of the full category of smooth $T$-schemes inside all (separated and finite-type) $T$-schemes.  This is a fundamental characterization of operational $K$-theory.

A similar statement holds for any operational theory whenever one has an analogue of Kimura's exact sequence (Proposition~\ref{p:kim2}) and resolution of singularities.  For example, (equivariant) operational Chow cohomology is also a Kan extension, at least in characteristic zero.
\end{remark}

We now give two examples showing that the natural map $KH^\circ(X) \rightarrow \opk^\circ(X)$ is not an isomorphism in general.  A third will be given in \S\ref{s:proof}.

\begin{example} \label{ex:nodal}
Let $X$ be a nodal cubic curve in the affine plane $\AA^2$.  Then $X$ is seminormal, so its Picard group is $\AA^1$-homotopy invariant \cite{traverso}.  By \cite[Theorem~3.3]{weibelKH}, it follows that
\[
KH^\circ (X) = \ZZ \oplus \kk^*.
\]
Now, consider the natural diagram
\begin{diagram}
 \opk^\circ(X) & \rTo & \opk^\circ(\widetilde X) \\
 \uTo &  & \uTo \\
 KH^\circ(X)  & \rTo   & KH^\circ(\widetilde X),
\end{diagram}
where $\widetilde X \cong \AA^1$ is the normalization of $X$.  Both $\opk^\circ(\widetilde X)$ and $KH^\circ(\widetilde X)$ are canonically identified with $\ZZ$, and the arrow between them is an isomorphism.  Furthermore, it follows from Proposition~\ref{p:kim1} that the top horizontal arrow is an isomorphism, and hence the left vertical arrow is the canonical projection from $\ZZ \oplus \kk^*$ to $\ZZ$.  In particular, when $\kk=\CC$, $KH^\circ(X) \rightarrow \opk^\circ(X)$ has uncountable kernel.
\end{example}

\begin{example} \label{ex:normal}
Let $X$ be a variety of dimension $d$ over $\CC$ that is smooth away from an isolated singularity.  Let $\pi: \widetilde X \rightarrow X$ be a log resolution, so the exceptional locus $E$ is a divisor with simple normal crossings.  Let $\Delta(E)$ be the dual complex of $E$, i.e. the $\Delta$-complex with one vertex for each irreducible component of $E$, one edge for each component of a pairwise intersection, and so on, as described in \cite[\S 2]{boundary}.   Recall that $KH$ can be nonzero in negative degrees.  Haesemeyer's computations for negative $KH$ \cite[Theorem~7.10]{haesemeyer} show that
\[
KH^{-d}(X) \cong H^{d-1}(\Delta(E)).
\]
By Bass's fundamental theorem for $KH$ \cite[Theorem~6.11]{weibelKH}, this gives $H^{d-1}(\Delta(E))$ as a direct summand of $KH^\circ(X \times \GG_m^d)$ that is contained in the kernel of the pullback map to $KH^\circ(\widetilde X \times \GG_m^d)$.  In the resulting natural diagram,
\begin{diagram}
 \opk^\circ(X \times \GG_m^d) & \rTo & \opk^\circ(\widetilde X \times \GG_m^d) \\
 \uTo &  & \uTo \\
 KH^\circ(X \times \GG_m^d)  & \rTo   & KH^\circ(\widetilde X \times \GG_m^d),
\end{diagram}
the top horizontal arrow is an injection, by Proposition~\ref{p:kim1}.  It follows that the summand $H^{d-1}(\Delta(E)) \subset  KH^\circ(X \times \GG_m^d)$ is contained in the kernel of the map to $\opk^\circ(X \times \GG_m^d)$.

This general construction produces many nontrivial examples.  For instance, $X$ could be a deformation of a cone over a degenerate elliptic curve, with $E$ a loop of $\PP^1$'s, and $\Delta(E)$ a circle.  Or $X$ could be a $3$-fold with $\Delta(E)$ being a copy of $S^2$ or $\RR \PP^2$, as in \cite[Example~8.1]{boundary}.
\end{example}

\begin{remark}
The map $KH^\circ(X) \to \opk^\circ(X)$ factors the natural map $K^\circ(X) \to \opk^\circ(X)$.  Properties of the other factor, $K^\circ(X) \to KH^\circ(X)$, therefore give information about the map from the $K$-theory of perfect complexes to operational $K$-theory.

For example, let $X$ be a toric variety with fan $\Delta$.  By \cite[Proposition~5.6]{chww}, the map $K^*(X) \to KH^*(X)$ is a split surjection.  The proof shows that the Mayer-Vietoris sequence computing $KH^*(X)$ can be described quite explicitly as follows.  Let $\sigma_1,\ldots,\sigma_r$ be the maximal cones of $\Delta$, write $\tau(I) = \sigma_{i_0} \cap \cdots \cap \sigma_{i_p}$ for each subset $I\subset \{1,\ldots,r\}$, and let $O_\tau$ be the minimal orbit in the invariant affine open set $U_\tau$.  Then there are spectral sequences
\begin{align} \label{e.mv-kh}
  E^1_{p,q} &= \bigoplus_{I=\{i_0,\ldots,i_p\}} K^q(O_{\tau(I)}) \Rightarrow KH^{q-p}(X)
\intertext{and}
  E^1_{p,q} &= \bigoplus_{I=\{i_0,\ldots,i_p\}} K^q(U_{\tau(I)}) \Rightarrow K^{q-p}(X),
\end{align}
together with natural split surjections $K^q(U_{\tau(I)}) \to K^q(O_{\tau(I)})$.  
Since each $O_{\tau(I)}$ is isomorphic to a split torus, say of dimension $d(I)$, the term $K^q(O_{\tau(I)})$ is isomorphic to $\bigoplus_{i=0}^{d(I)} K^{q-i}(k)^{\oplus \binom{d(I)}{i}}$, where $k$ is the base field.  The differential $d^1$ can be computed easily from the projections $T \to O_{\tau(I)}$; it is similar to the differential of the Mayer-Vietoris spectral sequence for singular cohomology, with respect to the same open cover.  (We thank Burt Totaro for suggesting this description of \cite[Proposition~5.6]{chww}.)

When $X$ has a sufficiently nice stratification by torus orbits, there are similar spectral sequences computing $K^*(X)$ and $KH^*(X)$.  For example, one can take $X$ to be any $T$-invariant closed subset of a toric variety $Y$.  If $Y$ is covered by invariant affines $V_\sigma$, then one has the Mayer-Vietoris sequence for the cover of $X$ by $U_\sigma = V_\sigma \cap X$.  The same reasoning shows that in this case $KH^*(X)$ is computed in terms of the $K$-theory of the ground field, and that there is a split surjection $K^*(X) \to KH^*(X)$.
\end{remark}

\begin{remark}\label{r.kh-surface}
The Mayer-Vietoris sequence \eqref{e.mv-kh} gives a straightforward computation of the homotopy $K$-theory for a complete toric surface over an arbitrary field $k$, since it collapses at the $E^2$ page.  Let $X$ be such a surface, corresponding to a complete fan with rays spanned by primitive integer vectors $(a_1,b_1),\ldots,(a_r,b_r)$.  Let $\mu$ be the index of the sublattice of $\ZZ^2$ spanned by these vectors.  Then one computes
\[
  KH^i(X) = K^i(k)^{\oplus r} \oplus K^{i+1}(k)/\mu\cdot K^{i+1}(k),
\]
where ``$\mu\cdot$'' denotes the operation of scaling an abelian group by $\mu$.  In particular, $KH^\circ(X) = \ZZ^{\oplus r} \oplus k^*/(k^*)^\mu$, and $KH^{-1}(X) = \ZZ/\mu\ZZ$.  (For algebraically closed fields $k$, we have $k^*=(k^*)^\mu$, so in this case $KH^\circ(X)$ depends only on the number of rays.)
\end{remark}

\section{Kronecker duality for $T$-linear varieties}

Now we state and prove an equivariant version of Theorem~\ref{t:simple-duality}.   Let the class of \define{$T$-linear varieties} be the smallest class of varieties such that any affine space with a $T$-action is a $T$-linear variety, the complement of a $T$-linear variety equivariantly embedded in affine space is $T$-linear, and any variety stratified into a finite disjoint union of $T$-linear varieties is $T$-linear.  The notion of stratification we use here is as follows: stratifying $X$ by locally closed subvarieties $S_i^\circ$ means $X$ is written as $X = \coprod S_i^\circ$, with $S_i = \overline{S_i^\circ}$ denoting the closure of a stratum, such that the complement $S_i \setminus S_i^\circ$ is contained in the union of the strata of dimension less than $\dim S_i$ (see \cite{totaro}).  A $T$-linear variety is also $T'$-linear for any subtorus $T'\subseteq T$.

\begin{theorem} \label{t:duality}
For a complete $T$-linear variety $X$, the natural map from $\opk^\circ_T(X)$ to $\Hom_{R(T)}(K^T_\circ(X), R(T))$, induced by pushforward to a point, is an isomorphism.
\end{theorem}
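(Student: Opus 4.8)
The plan is to induct on dimension, using the structure of $T$-linear varieties together with the exact sequences of Section~\ref{s:kimura} and the descent result Proposition~\ref{p:env-exact}. For the base case, a zero-dimensional $T$-linear variety is a finite union of points, where the statement is immediate from Corollary~\ref{c:smooth}. For the inductive step, I would first reduce to the case where $X$ is irreducible: a general $T$-linear variety admits a stratification into locally closed $T$-linear pieces, and one can use the localization sequence for $K^T_\circ$ together with the corresponding Mayer--Vietoris-type sequence for $\opk^\circ_T$ coming from Proposition~\ref{p:kim2} to glue. More precisely, if $U \subseteq X$ is an open $T$-linear subvariety with closed complement $Z$ (also $T$-linear, of lower dimension), then there is an exact sequence $K^T_\circ(Z) \to K^T_\circ(X) \to K^T_\circ(U) \to 0$, and applying $\Hom_{R(T)}(-,R(T))$ one gets a left-exact sequence; the task is to match this with an exact sequence for operational $K$-theory.

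The key geometric input is that a complete $T$-linear variety $X$ admits a birational equivariant envelope $f \colon X' \to X$ with $X'$ a disjoint union of smooth complete varieties (in fact one can arrange $X'$ to be built from projective spaces or products thereof), by resolving and using the definition of $T$-linear variety; this should be set up so that the exceptional data $E_i \to S_i$ again involves $T$-linear varieties of strictly smaller dimension, to which the inductive hypothesis applies. Then I would run the following diagram: the exact sequence of Proposition~\ref{p:kim1} (or the refined version \eqref{e:kim2} of Proposition~\ref{p:kim2}),
\[
  0 \to \opk^\circ_T(X) \to \opk^\circ_T(X') \oplus \bigoplus_i \opk^\circ_T(S_i) \to \bigoplus_i \opk^\circ_T(E_i),
\]
maps via pushforward-to-a-point into the sequence obtained by applying $\Hom_{R(T)}(-,R(T))$ to the dual envelope sequence of Proposition~\ref{p:env-exact} for $X' \to X$ (and its analogues for $E_i \to S_i$). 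On the smooth pieces $X'$, Theorem~\ref{t:smooth} together with the known duality $K^\circ_T \cong \Hom_{R(T)}(K^T_\circ,R(T))$ for smooth complete $T$-linear varieties (which one gets from the cellular/stratified structure and a $K$-theoretic Bialynicki-Birula or Atiyah--Hirzebruch type argument) gives the isomorphism; on $S_i$ and $E_i$ it is the inductive hypothesis. A diagram chase with the five lemma then yields the result for $X$. One subtlety to handle carefully: $\Hom_{R(T)}(-,R(T))$ is only left exact, so I must verify that the relevant pieces of $K^T_\circ$ are free (or at least that the needed $\operatorname{Ext}^1$ terms vanish), which for $T$-linear varieties follows because $K^T_\circ$ is a free $R(T)$-module on classes of orbit closures by d\'evissage (Lemma~\ref{l:devissage}) combined with the cell structure.

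The main obstacle I anticipate is establishing the base case for smooth complete $T$-linear varieties with the sharpness needed — namely that the natural map $K^\circ_T(X) \to \Hom_{R(T)}(K^T_\circ(X),R(T))$ is an isomorphism (not just rationally), so that one has a genuine \emph{integral} Kronecker pairing to bootstrap from. This requires knowing that both sides are free $R(T)$-modules of the same rank with a perfect pairing, which should follow from the fact that a smooth complete $T$-linear variety has a filtration by affine bundles over linear varieties, making $K^T_\circ$ free on the corresponding classes and $K^\circ_T$ free on a dual basis; the compatibility of this with $\opk^\circ_T$ is then Theorem~\ref{t:smooth}. A secondary obstacle is checking exactness of the $\Hom$-dualized envelope sequence in the needed spot — this is where freeness of the $K^T_\circ$-groups of the relevant (possibly singular but $T$-linear) schemes is essential, and it is precisely the place where the hypothesis ``$T$-linear'' is doing real work beyond the general machinery.
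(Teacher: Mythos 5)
Your approach is genuinely different from the paper's, and it has two gaps that I do not think can be repaired within your framework. The first is that the envelope induction does not close up on the class of $T$-linear varieties. When you resolve $X$ to get a birational equivariant envelope $f\colon X'\to X$, there is no reason for the centers $S_i$, the exceptional loci $E_i$, or the fiber product $X'\times_X X'$ appearing in Propositions~\ref{p:kim1} and \ref{p:kim2} to be $T$-linear again: the class of $T$-linear varieties is defined by a stratification condition, not preserved by blowups, preimages, or fiber products. So the inductive hypothesis cannot be applied to the terms $\Hom_{R(T)}(K^T_\circ(E_i),R(T))$ or $\Hom_{R(T)}(K^T_\circ(X'\times_X X'),R(T))$, and the five-lemma chase (which needs injectivity of the rightmost vertical arrow) breaks down. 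This kind of resolution-plus-Kimura-sequence induction is exactly how the paper proves Theorem~\ref{t:plp}, but there it works only because blowups of toric varieties along invariant centers stay inside the toric world.

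The second gap is the base case. For a smooth complete $T$-linear variety the perfectness of the integral pairing $K^\circ_T(X)\otimes K^T_\circ(X)\to R(T)$ does not follow from a filtration by affine bundles: Totaro's linear varieties are strictly more general than cellular ones (the open strata are complements of linear varieties in affine space), so $K^T_\circ$ need not be free on a basis dual to one for $K^\circ_T$, and neither Lemma~\ref{l:devissage} nor Lemma~\ref{l.linear-fg} gives freeness. The standard way to prove this duality--—and the route the paper actually takes for the whole theorem---is via a K\"unneth formula: Proposition~\ref{p.kunneth} shows $K^T_\circ(X)\otimes_{R(T)}K^T_\circ(Y)\to K^T_\circ(X\times Y)$ is an isomorphism for $X$ a $T$-linear variety and $Y$ arbitrary (proved by Totaro's induction on the stratification, using equivariant $K$-theory spectra and a smash product over $\KK'(T,\pt)$ to control the failure of right-exactness of the tensor product), and Proposition~\ref{p.fmss} then produces the inverse of the duality map purely formally, by pushing forward along the graph of $g\colon Y\to X$, applying K\"unneth, and contracting with $\phi\otimes 1$. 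No envelopes, resolutions, or $\Hom$-dualized descent sequences are needed. Once you are forced to prove the K\"unneth statement to handle your base case, the formal argument of Proposition~\ref{p.fmss} gives the full theorem directly, so the envelope machinery is both insufficient and unnecessary here.
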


The proof has the same structure as that of the corresponding result for Chow cohomology (\cite{fmss}, \cite{totaro}), the main difference being the use of $K$-theory spectra, rather than higher Chow groups, to establish the K\"unneth isomorphism in Proposition~\ref{p.kunneth}, below.

Before proving the theorem, we observe that finite generation of the operational $K$ groups is a consequence, by the following lemma.

\begin{lemma}\label{l.linear-fg}
If $X$ is a $T$-linear variety, then $K^T_\circ(X)$ is a finitely-generated $R(T)$-module.
\end{lemma}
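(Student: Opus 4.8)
The plan is to induct on the dimension of $X$, using the defining recursive structure of the class of $T$-linear varieties together with the devissage Lemma~\ref{l:devissage}. The base case is when $X$ is an affine space with a linear $T$-action; here $K^T_\circ(X) \cong K^T_\circ(\pt) = R(T)$ by $\AA^1$-homotopy invariance (iterated), which is certainly finitely generated. For the inductive step, I would first handle the case where $X$ is the complement of a $T$-linear variety $Z$ equivariantly embedded in an affine space $\AA^n$. From the localization sequence
\[
  K^T_\circ(Z) \to K^T_\circ(\AA^n) \to K^T_\circ(X) \to 0,
\]
surjectivity on the right shows $K^T_\circ(X)$ is a quotient of $K^T_\circ(\AA^n) = R(T)$, hence finitely generated (in fact cyclic).

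For the general case, $X$ is stratified by finitely many locally closed $T$-linear subvarieties $S_i^\circ$, with closures $S_i$ satisfying $S_i \setminus S_i^\circ$ contained in the union of lower-dimensional strata. Ordering the strata by dimension and taking successive open unions, one gets a filtration of $X$ by $T$-invariant open subsets whose successive "differences" are the strata $S_i^\circ$; repeatedly applying the localization sequence
\[
  K^T_\circ(\text{closed piece}) \to K^T_\circ(\text{bigger open}) \to K^T_\circ(S_i^\circ) \to 0
\]
reduces finite generation of $K^T_\circ(X)$ to finite generation of each $K^T_\circ(S_i^\circ)$. Each open stratum $S_i^\circ$ is again a $T$-linear variety, but it need not have smaller dimension than $X$; what does decrease is some invariant like the number of strata, so the induction should really be on the pair (number of strata in a chosen stratification, and within that, the combinatorics of the complement-of-affine construction). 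Since each $S_i^\circ$ arises — by the recursive definition of the class — either as an affine space, or as a complement in affine space of something strictly smaller in this well-founded ordering, or as a strictly simpler stratified union, the induction closes. Each localization sequence only ever exhibits $K^T_\circ$ of the total space as an extension of finitely generated $R(T)$-modules, and $R(T) \cong \ZZ[M]$ is Noetherian, so finite generation is preserved at every step.

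The main obstacle is purely bookkeeping: setting up a well-founded induction that simultaneously accommodates all three clauses in the definition of $T$-linear variety (affine space, complement in affine space, stratified union), and verifying that in the stratified case one really does get a filtration by $T$-invariant opens with the open strata as quotients — this uses the hypothesis that $S_i \setminus S_i^\circ$ lies in lower strata, so that $\bigcup_{\dim S_j \ge d} S_j^\circ$ is open. Once that combinatorial framework is in place, the actual $K$-theoretic input is just the localization right-exact sequence for equivariant $G$-theory (Thomason), $\AA^1$-invariance of $K^T_\circ$, and Noetherianity of $R(T)$, all already available in the excerpt. I would also remark that this lemma is exactly the $K$-theoretic analogue of the corresponding statement for Chow groups of linear varieties in \cite{totaro}, and the proof is formally identical once localization replaces the excision sequence for Chow groups.
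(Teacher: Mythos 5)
Your proposal is correct and follows essentially the same route as the paper: structural induction through the three defining clauses of $T$-linear varieties, with the right-exact localization sequence $K^T_\circ(Z) \to K^T_\circ(X) \to K^T_\circ(U) \to 0$ doing all the $K$-theoretic work. The bookkeeping you flag as the main obstacle is resolved in the paper by inducting on dimension and the number of irreducible components, using the observation that the top stratum of an \emph{irreducible} $T$-linear variety is necessarily the complement of a lower-dimensional $T$-linear variety in an affine space, while the union of the remaining strata is a closed $T$-linear subvariety of strictly smaller dimension (or, in the reducible case, with fewer components); also note that Noetherianity of $R(T)$ is not actually needed, since an extension of finitely generated modules by finitely generated modules is finitely generated over any ring.
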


\begin{proof}
We use induction on dimension and the number of irreducible components.  First suppose $X$ is irreducible of dimension $d$, and write
\[
  X = S^\circ \cup \coprod_{\dim S_i <d} S^\circ_i
\]
Observe that the top stratum $S^\circ$ in an irreducible $T$-linear variety must be the complement of a (lower-dimensional) $T$-linear variety in an affine space, by the inductive definition.  Such varieties clearly have finitely generated $K^T_\circ$.  Also, $Z=\coprod_{\dim S_i <d} S^\circ_i$ is a closed $T$-linear subvariety of smaller dimension, so $K^T_\circ(Z)$ is finitely generated.  By the exact sequence
\[
  K^T_\circ(Z) \to K^T_\circ(X) \to K^T_\circ(S^\circ) \to 0,
\]
it follows that $K^T_\circ(X)$ is finitely generated.

If $X$ is reducible, one can find a top-dimensional stratum $S^\circ$ such that $X' = X\setminus S^\circ$ is a closed $T$-linear variety with fewer components, and apply the exact sequence again.
\end{proof}

As in Totaro's analogous result for Chow cohomology \cite{totaro}, Theorem~\ref{t:duality} is an immediate consequence of two facts.  The first is an analogue of \cite[Proposition~3]{fmss}.

\begin{proposition}\label{p.fmss}
Suppose $X$ is a complete $T$-variety with the property that for all $T$-varieties $Y$, the map
\[
  K^T_\circ(X) \otimes_{R(T)} K^T_\circ(Y) \to K^T_\circ(X\times Y)
\]
is an isomorphism.  Then the duality map
\[
\opk_T^\circ(X) \to \Hom_{R(T)}(K^T_\circ(X), R(T))
\]
is an isomorphism.
\end{proposition}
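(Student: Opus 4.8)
The plan is to exhibit an explicit two-sided inverse to the duality map, using the Künneth hypothesis to convert operators into elements of the linear dual and vice versa. Given an element $\phi \in \Hom_{R(T)}(K^T_\circ(X), R(T))$, I would define an operational class $c^\phi$ as follows. For any $g \colon Y \to X$ and any $\xi \in K^T_\circ(Y)$, let $(\id_X \times g)_* [\Gamma_g]_*$ of the graph — more precisely, push the class $\xi$ into $K^T_\circ(X \times Y)$ via the graph embedding $Y \hookrightarrow X \times Y$ of $g$ (composed with the projection), apply the inverse of the Künneth isomorphism to write this image as $\sum_j a_j \otimes \eta_j$ with $a_j \in K^T_\circ(X)$ and $\eta_j \in K^T_\circ(Y)$, and set $c^\phi_g(\xi) = \sum_j \phi(a_j)\,\eta_j \in K^T_\circ(Y)$ (identifying the diagonal copy). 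The content is to check that this collection of operators satisfies Axioms (A\ref{axiom:pushforward}) and (A\ref{axiom:gysin}); since the Künneth map is natural with respect to pushforward and flat/regular pullback in the $Y$ variable (these operations on $K^T_\circ(X \times Y)$ are compatible with $K^T_\circ(X) \otimes_{R(T)} (-)$ applied to $K^T_\circ(Y)$, because Gysin maps and proper pushforward are $K^\circ_T$-linear), this reduces to the corresponding properties of the operators on the $Y$-side alone, which hold tautologically.

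Next I would verify that $\phi \mapsto c^\phi$ is inverse to the duality map. In one direction: pushing forward $c^\phi$ along $X \to \pt$ and evaluating on $[\OO_\pt]$ amounts to taking $g = \id_X$, $\xi = [\OO_X]$, noting the graph embedding is the diagonal $\delta \colon X \hookrightarrow X \times X$, and applying $\phi$ to the components; since $X$ is complete, $(\id \times (X \to \pt))_*$ composed with the Künneth inverse of $\delta_*[\OO_X]$ recovers the pairing $\langle \phi, - \rangle$, so one gets $\phi$ back. In the other direction: starting from an operational class $c$, form the class $\gamma_X := c_{p_2}([\OO_{X \times X}]) \in K^T_\circ(X \times X)$ using the second projection $p_2\colon X \times X \to X$; the Künneth hypothesis lets us think of $\gamma_X$ as the "kernel" of the operator, and one must show that $c$ is reconstructed from its associated functional $\phi_c = (\text{pushforward to a point}) \circ c$ by the formula above. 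This is the heart of the argument: it says that an operational class is determined by, and can be built from, its value on the single class $[\OO_{X \times X}]$ over the structure map of $X \times X$. The key input is Proposition~\ref{p:smooth} together with the fact that, by the axioms, $c_g$ for a general $g\colon Y \to X$ is determined by $c_{p_2}$ via pullback along the graph of $g$ and the Künneth decomposition — so one chases the diagram
\begin{diagram}
Y & \rTo^{\gamma_g} & X \times Y \\
 & \rdTo_{g} & \dTo_{p_1} \\
 & & X
\end{diagram}
and uses (A\ref{axiom:pushforward}) and (A\ref{axiom:gysin}) to reduce $c_g$ to $c_{p_1}$ on $X \times Y$, then Künneth to reduce $c_{p_1}$ to $c_{p_2}$ on $X \times X$.

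The main obstacle I anticipate is the last reduction — showing rigorously that the whole collection $(c_g)$ is determined by the single operator $c_{p_2}$ on $K^T_\circ(X \times X)$, which is where the Künneth isomorphism is used in an essential (not merely formal) way. Concretely, for $g\colon Y \to X$ one factors $g$ through its graph $\gamma_g\colon Y \to X \times Y$ (a regular embedding, since it is a section of the smooth projection $p_1\colon X\times Y \to Y$ only when... — actually $\gamma_g$ is a section of $p_2\colon X\times Y\to Y$ when $Y$ is separated, hence a closed embedding, and it is regular when $X$ is smooth; for general $X$ one instead argues via $[\OO_V]$-generators as in Lemma~\ref{l:devissage} and Proposition~\ref{p:smooth}), and one must match up the Gysin-pullback of $\gamma_g$ with the slicing provided by Künneth. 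Handling the case of general (possibly singular) $X$ here, where $\gamma_g$ need not be a regular embedding, is the delicate point; the resolution is to use that it suffices to evaluate operators on classes $[\OO_V]$ for $V$ a subvariety, reduce to $V$ in place of $Y$ via (A\ref{axiom:pushforward}), and then exploit completeness of $X$ to run the pushforward-to-a-point argument, exactly paralleling the Chow-theoretic proof of \cite[Proposition~3]{fmss} with higher $K$-theory spectra supplying the Künneth statement in place of higher Chow groups.
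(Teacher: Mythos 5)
Your construction of the inverse map is exactly the one the paper uses: push $\xi$ forward along the graph of $g$ into $K^T_\circ(X\times Y)$, apply the inverse of the K\"unneth isomorphism, and then apply $\phi\otimes 1$; the verification of the axioms and of the two-sided inverse property is the formal argument of \cite{fmss}, which the paper likewise defers to. Your proposal is correct and follows essentially the same route, just with more of the FMSS-style bookkeeping written out.
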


As in \cite{fmss}, the proof is formal.

\begin{proof}[Sketch of proof]
To define the inverse to the duality map, given a homomorphism $\phi\colon K^T_\circ(X) \to R(T)$, we construct an element $c_\phi$ of $\opk_T^\circ(X)$ as follows.  For a map $g\colon Y \to X$, the corresponding map $(c_\phi)_g\colon K^T_\circ(Y) \to K^T_\circ(Y)$ is the composition of the proper pushforward along the graph of $g$,
\[
  K^T_\circ(Y) \to K^T_\circ(X\times Y) = K^T_\circ(X)\otimes_{R(T)} K^T_\circ(Y)
\]
followed by
\[
K^T_\circ(X)\otimes_{R(T)} K^T_\circ(Y) \xrightarrow{\phi\otimes 1} R(T) \otimes_{R(T)} K^T_\circ(Y) = K^T_\circ(Y).
\]
The verification that $c_\phi$ satisfies the compatibility axioms is the same as in \cite{fmss}, as is the proof that this map is inverse to the duality map.
\end{proof}

The second fact we need is an analogue of \cite[Proposition~1]{totaro}.

\begin{proposition}\label{p.kunneth}
Let $X$ be a $T$-linear variety, and let $Y$ be an arbitrary $T$-variety.  Then the map
\[
  K^T_\circ(X) \otimes_{R(T)} K^T_\circ(Y) \to K^T_\circ(X\times Y)
\]
is an isomophism.
\end{proposition}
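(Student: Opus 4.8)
The plan is to lift the map to the level of $K$-theory spectra of coherent sheaves and prove the lifted statement by induction on the inductive structure of $T$-linear varieties, with Thomason's localization theorem for higher equivariant $K$-theory playing the role that a right-exact localization sequence plays in the Chow case.

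\emph{Set-up.} The external product $\shfF \boxtimes \shfG = p_{1}^{*}\shfF \otimes_{\OO} p_{2}^{*}\shfG$ of equivariant coherent sheaves is exact in each variable, since the tensor product is taken over the ground field; hence it descends to $K_{\circ}$ and defines the map in the statement. Let $\mathbf{K}^{T}(W)$ denote the (connective) $K$-theory spectrum of the category of $T$-equivariant coherent sheaves on $W$, so that $\pi_{i}\mathbf{K}^{T}(W) = K^{T}_{i}(W)$ is higher equivariant $K$-theory and $\pi_{0}\mathbf{K}^{T}(W) = K^{T}_{\circ}(W)$. Because $\pt$ is regular, $\mathbf{K}^{T}(\pt)$ is a connective ring spectrum with $\pi_{0} = R(T)$, and each $\mathbf{K}^{T}(W)$ is a module spectrum over it; the external product refines to a natural map of $\mathbf{K}^{T}(\pt)$-module spectra
\[
  \Phi_{X}\colon \mathbf{K}^{T}(X)\wedge_{\mathbf{K}^{T}(\pt)}\mathbf{K}^{T}(Y)\longrightarrow \mathbf{K}^{T}(X\times Y).
\]
Since all three spectra are connective, $\pi_{0}$ of a relative smash product is the relative tensor product of the $\pi_{0}$'s, so $\pi_{0}\Phi_{X}$ is precisely the K\"unneth map $K^{T}_{\circ}(X)\otimes_{R(T)}K^{T}_{\circ}(Y)\to K^{T}_{\circ}(X\times Y)$. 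It therefore suffices to prove that $\Phi_{X}$ is an equivalence whenever $X$ is $T$-linear.

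\emph{Induction.} For the base case $X = \AA^{n}$ with a linear $T$-action, both $\AA^{n}\to\pt$ and $\AA^{n}\times Y\to Y$ are equivariant affine bundles, so Thomason's homotopy invariance \cite{thomason} gives $\mathbf{K}^{T}(\AA^{n})\simeq\mathbf{K}^{T}(\pt)$ and $\mathbf{K}^{T}(Y)\simeq\mathbf{K}^{T}(\AA^{n}\times Y)$, and one checks $\Phi_{\AA^{n}}$ is this equivalence (on $\pi_{0}$ it sends $1\otimes[\shfG]$ to $[p_{2}^{*}\shfG]$). For $U = \AA^{n}\setminus Z$ with $Z\subseteq\AA^{n}$ closed and $T$-linear, Thomason's localization theorem together with d\'evissage \cite{thomason, thomason-trobaugh} gives a cofiber sequence of $\mathbf{K}^{T}(\pt)$-module spectra $\mathbf{K}^{T}(Z)\to\mathbf{K}^{T}(\AA^{n})\to\mathbf{K}^{T}(U)$, and the analogous sequence holds after forming products with $Y$; smashing the first over $\mathbf{K}^{T}(\pt)$ with $\mathbf{K}^{T}(Y)$ (an exact operation) and comparing with the second via the maps $\Phi$, the inductive hypothesis and the base case give equivalences on two of the three terms, hence on $\Phi_{U}$. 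Finally, for a stratification $X = \coprod_{i}S_{i}^{\circ}$ into $T$-linear strata, peel off a stratum $S^{\circ}$ of top dimension; it is open, with closed complement $X'$ stratified by the remaining strata, and the localization cofiber sequence $\mathbf{K}^{T}(X')\to\mathbf{K}^{T}(X)\to\mathbf{K}^{T}(S^{\circ})$, again compatible with products with $Y$, reduces $\Phi_{X}$ to $\Phi_{X'}$ (fewer strata) and $\Phi_{S^{\circ}}$. Hence $\Phi_{X}$ is an equivalence for every $T$-linear $X$, and applying $\pi_{0}$ proves the proposition.

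\emph{Main obstacle.} The essential point, and the one requiring care, is that one must genuinely work with $K$-theory spectra: the external product, the localization theorem, and d\'evissage must be organized compatibly at the level of $\mathbf{K}^{T}(\pt)$-module spectra and cofiber sequences, so that each inductive step is a ``two out of three'' in a morphism of cofiber sequences. This is unavoidable because the localization sequences shift homological degree---an argument phrased with the groups $K^{T}_{\circ}$ alone would have only right-exact sequences available, which yield surjectivity but not injectivity of the K\"unneth map in the inductive steps. No flatness of $K^{T}_{*}(X)$ over $R(T)$ is needed (and none holds in general, for instance over a finite field); the connectivity of the spectra makes the extraction of the $\pi_{0}$ statement automatic.
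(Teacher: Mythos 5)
Your proposal is correct and follows essentially the same route as the paper: both run Totaro's induction over the inductive structure of $T$-linear varieties, with the key input being the relative smash product $\KK'(T,X)\wedge_{\KK'(T,\pt)}\KK'(T,Y)$ of equivariant $K$-theory spectra of coherent sheaves, whose $\pi_0$ is $K^T_\circ(X)\otimes_{R(T)}K^T_\circ(Y)$ and which carries the localization fibrations along. The only difference is how each inductive step is closed: you prove the spectrum-level comparison map is an equivalence via two-out-of-three for maps of cofiber sequences, whereas the paper stays on homotopy groups and chases Totaro's five-term exact sequences (using the degree-one terms for injectivity); your version is marginally stronger but rests on the same compatibility of the external product with localization.
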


\begin{proof}
The proof follows Totaro's inductive argument, with Chow groups replaced by equivariant $K$-theory and the groups $CH^{*,*}(-,-,1)$ replaced by $\tilde K_1^T (-, -)$, which are defined as follows.  Let $X$ and $Y$ be arbitrary $T$-varieties.  Let $\KK'(T,X)$ and $\KK'(T,Y)$ be the equivariant $K$-theory spectra corresponding to categories of $T$-equivariant coherent sheaves (cf.~\cite{thomason}), and write $\rR=\KK'(T,\mathrm{pt})$ for the equivariant $K$-theory spectrum of the point $\Spec k$.  Then $\rR$ is a ring spectrum, and the other spectra are modules over $\rR$.  Define
\[
  \tilde\KK^T(X,Y) = \KK'(T,X) \wedge_\rR \KK'(T,Y),
\]
and set $\tilde{K}^T_q(X,Y) = \pi_q( \tilde\KK^T(X,Y) )$.  By \cite[Theorem~IV.6.4]{ekmm}, we have $\tilde{K}^T_\circ(X,Y) = K^T_\circ(X)\otimes_{R(T)} K^T_\circ(Y)$.  

Let $Z \subseteq X$ be a closed subvariety and let $U = X\setminus Z$.  We replace the two four column diagrams of Chow groups on \cite[p.~11]{totaro} with
\begin{diagram}
K^T_\circ(Z) \mathop\otimes_{R(T)} K^T_\circ(Y) & \rTo & K^T_\circ(X)\mathop\otimes_{R(T)} K^T_\circ(Y) & \rTo &  K^T_\circ(U)\mathop\otimes_{R(T)} K^T_\circ(Y) & \rTo & 0 \\
\dTo &  & \dTo &  & \dTo &  &  \\
K^T_\circ(Z\times Y) & \rTo & K^T_\circ(X \times Y) & \rTo &  K^T_\circ(U\times Y) & \rTo & 0,
\end{diagram}
and its continuation to the left, which is

{\footnotesize
\begin{diagram}
\tilde{K}^T_1(Z,Y) & \rTo & \tilde{K}^T_1(X,Y) & \rTo & \tilde{K}^T_1(U,Y) & \rTo & K^T_\circ(Z) \mathop\otimes_{R(T)} K^T_\circ(Y) & \rTo & K^T_\circ(X)\mathop\otimes_{R(T)} K^T_\circ(Y) \\ 
\dTo & &  \dTo &  & \dTo &  & \dTo &  & \dTo &  & \\
K_1^T(Z \times Y) & \rTo & K^T_1(X\times Y) & \rTo & K^T_1(U\times Y) & \rTo & K^T_\circ(Z\times Y) & \rTo & K^T_\circ(X \times Y) . 
\end{diagram}
}

\noindent The bottom rows are just the long exact localization sequence.  The smash product preserves fibrations, so we have a fibration 
\[
\tilde\KK^T(Z,Y) \to \tilde\KK^T(X,Y) \to \tilde\KK^T(U,Y),
\]
and the top rows of the above diagrams are the corresponding long exact homotopy sequence.  The outer tensor product defines a canonical map $\tilde\KK^T(X,Y) \to \KK'(T,X\times Y)$, inducing the vertical arrows.

The remainder of the proof follows Totaro's argument essentially verbatim.  We consider two properties for a $T$-variety $X$, namely
\begin{enumerate}
\item The natural map $K^T_\circ(X) \otimes_{R(T)} K^T_\circ(Y) \to K^T_\circ(X\times Y)$ is an isomorphism for all $T$-varieties $Y$, and
\item The natural map $\tilde{K}_1^T(X,Y) \rightarrow K_1^T(X \times Y)$ is surjective for all $T$-varieties $Y$.
\end{enumerate}
An induction on dimension shows simultaneously that both (1) and (2) hold for the complement of any $T$-linear variety embedded in affine space, and that (1) holds for arbitrary $T$-linear varieties, the latter being the statement that is to be proved.  The base case of the induction is that both properties hold for affine space.  Then one simple diagram chase shows that if (1) and (2) hold for $X$ and (1) holds for $Z$ then both (1) and (2) hold for $U$, and another diagram chase shows that if (1) and (2) hold for $U$ and (1) holds for $Z$, then (1) holds for $X$.
\end{proof}

\section{Operational $K$-theory of toric varieties}\label{s:proof}

We now prove Theorem~\ref{t:plp} by resolution of singularities and induction on dimension.  The argument is similar to the proof of \cite[Theorem~1]{chow}, using the $K$-theoretic results from Section~\ref{s:kimura} in place of Kimura's analogous results for Chow cohomology.

\begin{proof}[Proof of Theorem~\ref{t:plp}.]
As mentioned in the introduction, the theorem is known if $X$ is smooth.  If $X$ is singular, then there is a sequence
\[
  X_r \rightarrow X_{r-1} \rightarrow \cdots \rightarrow X_1  \stackrel{\pi}{\rightarrow} X_0 = X
\]
where $X_r$ is smooth, each $X_i$ is a toric variety, and the map $X_{i + 1} \rightarrow X_i$ is the blowup along a smooth $T$-invariant subvariety of $X_i$.  We proceed by induction on $r$ and the dimension of $X$.

Let $X' = X(\Delta') = X_1$, and let $\iota_{\sigma'}$ denote the inclusion of the torus orbit $O_{\sigma'}$ in $X'$.  By induction on $r$, we may assume that $\bigoplus_{\sigma' \in \Delta'} \iota_{\sigma'}^*$ maps $\opk_T^\circ(X')$ isomorphically onto $\PExp(\Delta')$.  Since $\pi$ maps $O_{\sigma'}$ isomorphically onto $O(\sigma)$ if $\sigma'$ is a maximal cone in the subdivision of $\sigma$ induced by $\Delta'$, it follows that $\bigoplus_{\sigma \in \Delta} \iota_\sigma^*$ maps $\opk_T^\circ(X)$ injectively into $\PExp(\Delta)$.  It remains to show that every integral piecewise exponential function on $\Delta$ is in the image of $\opk_T^\circ(X)$.

Say $\pi$ is the blowup along $V(\tau) \subset X$, and $V(\rho) = \pi^{-1} (V(\tau))$.  Let $\Star \tau$ be the set of cones in $\Delta$ containing $\tau$, and let $\Delta_\tau$ be the fan whose cones are the projections of cones in $\Star \tau$ to $(N / N_\tau)_\RR$, where $N_\tau$ is the sublattice generated by $\tau \cap N$.  Then $V(\tau)$ is the toric variety associated to $\Delta_\tau$ \cite[Section 3.1]{Fulton}.  By induction on dimension, we may assume $\opk_{T_\tau}^\circ(V(\tau)) \cong \PExp(\Delta_\tau)$, where $T_\tau$ is the dense torus in $V(\tau)$.  Choosing a splitting $T \cong T'\times T_\tau$, we have
\[
\opk_T^\circ(V(\tau)) \ \cong \ R(T') \otimes \opk_{T_\tau}(V(\tau)) \ \cong \ \PExp(\Star \tau),
\]
using Corollary~\ref{c:opKtrivial} for the first isomorphism.  
Similarly, the operational equivariant $K$ ring of the exceptional divisor is $\opk_T^\circ(V(\rho)) \cong \PExp(\Star \rho)$.

Note that $\Star \rho$ is a subdivision of $\Star \tau$, and $\Delta$ and $\Delta'$ coincide away from $\Star \tau$ and $\Star \rho$, and the blowup $\pi$ is an envelope \cite[Lemma~1]{chow}.  Then, by Proposition~\ref{p:kim2}, a class in $\opk_T^\circ(X')$ is in the image of $\opk_T^\circ(X)$ if and only if  its restriction to $V(\rho)$ is in the image of $\opk_T^\circ(V(\tau))$.  Therefore, a piecewise exponential function on $\Delta'$ is in the image of $\opk_T^\circ(X)$ if and only if its restriction to $\Star \rho$ is the pullback of a piecewise exponential function on $\Star \tau$.  In particular, the pullback of any piecewise exponential on $\Delta$ is in the image of $\opk_T^\circ(X)$, as required.
\end{proof}

Finally, we prove Theorem~\ref{t.toric-nontrivial}.  As remarked in the introduction, it suffices to establish the following:

\begin{theorem}\label{t.toric3fold}
For a three-dimensional toric variety $X$ over an algebraically closed field, the map $KH^\circ(X) \to \opk^\circ(X)$ is surjective.
\end{theorem}

The proof requires some facts about homotopy $K$-theory.  By \cite[Theorem~3.5]{haesemeyer}, there is a {\em cdh}-descent sequence
\begin{equation}\label{e.cdh}
 \to KH^{i+1}(E) \to KH^i(X) \to KH^i(X') \oplus KH^i(S) \to KH^i(E) \to 
\end{equation}
associated to any abstract blowup square
\begin{diagram}
  E & \rInto & X' \\
  \dTo &   &  \dTo_\pi \\
  S & \rInto & X.
\end{diagram}
That is, $\pi\colon X' \to X$ is a proper map, with $E = \pi^{-1}S$, inducing an isomorphism $X'\setminus E \to X \setminus S$.  In particular, we have a sequence \eqref{e.cdh} associated to any toric resolution of singularities.

To compare homotopy $K$-theory and operational $K$-theory, our main tool is the following commutative diagram with exact rows coming from the Kimura exact sequence \eqref{e:kim3} and the {\em cdh}-descent sequence \eqref{e.cdh}:
\begin{equation}\label{e.descent}
\begin{diagram}
      0   & \rTo & \opk^\circ(X) &\rTo & \opk^\circ(X') \oplus \opk^\circ(S) & \rTo  & \opk^\circ(E)  \\
      \uTo   &       &  \uTo  &     &  \uTo                  &        &  \uTo    \\ 
  KH^1(E) & \rTo & KH^\circ(X) & \rTo & KH^\circ(X') \oplus KH^\circ(S) & \rTo  & KH^\circ(E)  \ .
\end{diagram}
\end{equation}
The vertical arrows in this diagram come from the natural transformation $\theta: KH^\circ \rightarrow \opk^\circ$ given by Corollary~\ref{c.kh}.

We apply the diagram to study the natural map $\theta: KH^\circ(X) \rightarrow \opk^\circ(X)$ first for a chain of rational curves, then for a toric surface, and finally for a toric threefold.

A {\em chain of $n$ rational curves} is the reducible nodal variety
\[
X_1 \cup \cdots \cup X_n,
\]
whose irreducible components $X_i$ are smooth rational curves, constructed inductively by gluing $X_n$ to a chain of $n-1$ rational curves at a smooth point in $X_{n-1}$.

\begin{lemma}\label{l.curve-kh-opk}
Let $X$ be a chain of $n$ rational curves.  Then the natural map $\theta\colon KH^\circ(X) \to \opk^\circ(X)$ is an isomorphism.
\end{lemma}

\begin{proof}
When $n=1$, the isomorphism is clear since $X$ is smooth. We proceed by induction on $n$.  Let $S$ be the point of intersection where $X_n$ meets $X_{n-1}$, and let $X' = (X_1 \cup \cdots \cup X_{n-1}) \sqcup X_n$ be the disconnected curve obtained by pulling apart this node.     By induction we may assume that $KH^\circ(X') \rightarrow \opk^\circ(X')$ is an isomorphism.  Let $E = S \sqcup S$ be the preimage of $S$ under the gluing map $X' \rightarrow X$.

The map $KH^i(X') \oplus KH^i(S) \to KH^i(E)$ is surjective for all $i$, and it follows from the {\em cdh}-descent sequence \eqref{e.cdh} that the map $KH^\circ(X) \to KH^\circ(X') \oplus KH^\circ(S)$ is injective.  
Therefore, we can replace $KH^1(E)$ by $0$ in \eqref{e.descent}, and still have a commutative diagram with exact rows.  The first, third, and fourth vertical arrows are isomorphisms, so the second vertical arrow is also an isomorphism, by the five lemma.
\end{proof}

Slightly more generally, if $X$ is a disconnected union of chains of rational curves, it is a seminormal $1$-dimensional scheme, so $KH^\circ(X)$ is identified with $\Pic(X) \oplus H^0(X)$, where $H^0$ denotes the free abelian group on connected components \cite[IV.12.5.2]{weibel}.  A direct calculation using the Mayer-Vietoris sequence \eqref{e.mv-kh} shows that 
\begin{equation}\label{e.chain}
KH^i(X) = \left(\Pic(X) \otimes K^i(k)\right) \oplus \left(H^0(X) \otimes K^i(k)\right)
\end{equation}
for all $i$.  The isomorphism class of a line bundle on $X$ is determined by the degree of its restriction to each irreducible component, so $\Pic(X) \isom \ZZ^{\oplus n}$, where $n$ is the total number of irreducible components.

\begin{lemma}\label{l.surface-kh}
Let $X$ be a toric surface over an algebraically closed field, let $S$ be the singular set of $X$, and let $X' \to X$ be a toric resolution of singularities which is an isomorphism away from $S$.  Then the map $KH^\circ(X) \to KH^\circ(X') \oplus KH^\circ(S)$ is injective.
\end{lemma}

\begin{proof}
Let $E\subset X'$ be the exceptional divisor.  By the {\em cdh}-descent sequence \eqref{e.cdh}, it suffices to prove $KH^1(X') \oplus KH^1(S) \to KH^1(E)$ is surjective.  We have $KH^1(E) = (\Pic(E) \oplus H^0(E))\otimes k^*$, since $E$ is a union of chains of rational curves.  Since $S$ consists of finitely many points, one corresponding to each connected component of $E$, the map $KH^1(S) \to KH^1(E)$ is an isomorphism onto $H^0(E)\otimes k^*$.  The map $\Pic(X')\otimes k^* \to \Pic(E)\otimes k^*$ factors through the map $KH^1(X') \to KH^1(E)$.  Therefore, since $k^*$ is divisible, to prove that $KH^1(X') \oplus KH^1(S) \rightarrow KH^1(E)$ is surjective, it suffices to prove that the map $\Pic(X')\otimes \QQ \to \Pic(E)\otimes \QQ$ is surjective.  
The intersection matrix of $E$ in the surface $X'$ is negative-definite, so the subspace of $\Pic(X')\otimes\QQ$ spanned by the irreducible components of $E$ surjects onto $\Pic(E)\otimes\QQ$, and the lemma follows.
\end{proof}

\begin{proposition}\label{p.surface-kh-opk}
Let $X$ be a toric surface over an algebraically closed field.  Then the natural map $\theta: KH^\circ(X) \to \opk^\circ(X)$ is an isomorphism.
\end{proposition}

\begin{proof}
Let $S \subset X$ be the singular set, and let $X' \to X$ be a toric resolution of singularities which is an isomorphism away from $S$, with exceptional divisor $E$.  Then each connected component of $E$ is a chain of rational curves, so $KH^\circ(E) = \opk^\circ(E)$, by Lemma~\ref{l.curve-kh-opk}..  Furthermore, $KH^\circ(X') = \opk^\circ(X')$ and $KH^\circ(S)=\opk^\circ(S)$, since these are smooth varieties.  From the diagram \eqref{e.descent}, it follows that $KH^\circ(X) \to \opk^\circ(X)$ is surjective.  But applying Lemma~\ref{l.surface-kh}, we see this map is also injective.
\end{proof}

We can now complete the proof of Theorem~\ref{t.toric3fold} for a toric threefold $X$ over an algebraically closed field, using induction on the number of blowups along smooth $T$-invariant centers required to resolve the singularities of $X$.

\begin{proof}[Proof of Theorem~\ref{t.toric3fold}]
Let $X' \to X$ be a blowup along a smooth $T$-invariant center $S$, with exceptional divisor $E$, forming the first step in a resolution of singularities as in the proof of Theorem~\ref{t:plp}.  Consider the corresponding diagram \eqref{e.descent}.  By induction, $\theta: KH^\circ(X') \rightarrow \opk^\circ(X')$ is surjective, and $KH^\circ(S) = \opk^\circ(S)$ since $S$ is smooth, so the middle vertical arrow is surjective.  The rightmost vertical arrow $KH^\circ(E) \to \opk^\circ(E)$ is an isomorphism by Proposition~\ref{p.surface-kh-opk}, since $E$ is a toric surface.  The theorem now follows from the five-lemma.
\end{proof}

It is easy to construct examples of projective toric threefolds $X$ such that $KH^\circ(X)$ contains a factor of $k^*$.  For such varieties, the map $KH^\circ(X) \to \opk^\circ(X)$ is not injective.

\begin{example}\label{ex:toric3}
Let $\Delta$ be the fan over the faces of the triangular prism with vertices $(1,0,0)$, $(0,1,0)$, $(0,0,1)$, $(0,-1,-1)$, $(-1,0,-1)$, and $(-1,-1,0)$.  The singular locus $S$ of toric variety $X=X(\Delta)$ consists of four fixed points, corresponding to the maximal cones over the three rectangular faces and the simplical cone spanned by $(0,-1,-1)$, $(-1,0,-1)$, and $(-1,-1,0)$.  Let $\Delta'$ be the fan obtained from $\Delta$ by triangulating the three rectangular faces (via any of the $8$ possible choices) and subdividing one the other singular cone by adding a ray through $(-1,-1,-1)$.  The resulting toric variety $X'=X(\Delta')$ is smooth, and the exceptional locus $E$ of the map $X' \to X$ consists of four disjoint components, three isomorphic to $\PP^1$ and one isomorphic to $\PP^2$.  The map $\Pic(X') \to \Pic(E)$ is given by a $4\times 4$ matrix of rank $3$, so the corresponding map $KH^1(X') \oplus KH^1(S) \to KH^1(E)$ has a copy of $k^*$ in its cokernel, mapping nontrivially to $KH^\circ(X)$ in the {\em cdh}-descent sequence.
\end{example}

\appendix


\section{Descent for equivariant $K$-theory}\label{s:descent}

All schemes in this appendix are separated and of finite type over a fixed field.  Our goal is to establish an equivariant version of a theorem of Gillet \cite[Corollary~4.4]{gillet}.

\begin{theorem}\label{t.gillet}
Let $G$ be a connected algebraic group.  
Let $X \to Y$ be a proper morphism of $G$-schemes, and let $Z \to X$ be an equivariant Chow envelope (relative to $Y$), that is, an equivariant envelope such that $Z$ is projective over $Y$.  Then the sequence
\[
  K^G_\circ(Z\times_X Z) \to K^G_\circ(Z) \to K^G_\circ(X) \to 0
\]
is exact, where the first map is the difference of the pushforwards by the two projections.
\end{theorem}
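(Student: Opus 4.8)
The plan is to deduce the three-term exact sequence from the degenerate part of a descent spectral sequence for equivariant $K$-theory of coherent sheaves, following the strategy of Gillet's non-equivariant argument \cite{gillet} but carried out with Thomason's foundations for equivariant higher $K$-theory \cite{thomason, thomason-inv}. Write $\KK'(G,-)$ for the equivariant $K$-theory spectrum of coherent sheaves, so that $K^G_q(-) = \pi_q \KK'(G,-)$.

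I would first dispose of the two formal parts of the statement. Surjectivity of $f_* \colon K^G_0(Z) \to K^G_0(X)$ follows from equivariant d\'evissage (as in Lemma~\ref{l:devissage}) together with Noetherian induction: every class in $K^G_0(X)$ is an $R(G)$-combination of classes $[\OO_V]$ of $G$-invariant subvarieties $V \subseteq X$, each such $V$ is dominated birationally by a $G$-invariant $V' \subseteq Z$ by the envelope hypothesis, and $f_*[\OO_{V'}]$ agrees with $[\OO_V]$ modulo classes supported on $G$-invariant subvarieties of strictly smaller dimension. The composite $f_* \circ (p_{1*} - p_{2*})$ vanishes because $f \circ p_1 = f \circ p_2$. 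So the content is exactness in the middle, i.e., $\ker f_* \subseteq \im(p_{1*} - p_{2*})$.

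Next I would form the \v{C}ech nerve $Z_\bullet$ of $f$, with $Z_n = Z \times_X \cdots \times_X Z$ ($n+1$ factors); all face and degeneracy maps are proper and equivariant (the faces are base changes of $f$), so equivariant $K$-theory of coherent sheaves yields a simplicial spectrum $n \mapsto \KK'(G, Z_n)$ augmented to $\KK'(G, X)$, with its associated spectral sequence
\[
 E^1_{p,q} = K^G_q(Z_p) \Rightarrow \pi_{p+q}\,|\KK'(G, Z_\bullet)|,
\]
where $d^1$ is the alternating sum of pushforwards along the face maps. Since these spectra are connective, $E^1_{p,q} = 0$ for $q < 0$; hence $E^2_{0,0} = E^\infty_{0,0}$, there is no extension problem in total degree zero, and
\[
 \pi_0\,|\KK'(G, Z_\bullet)| \;\cong\; \operatorname{coker}\bigl( K^G_0(Z\times_X Z) \xrightarrow{\,p_{1*} - p_{2*}\,} K^G_0(Z) \bigr),
\]
the isomorphism being induced by the augmentation $Z_0 = Z \to X$. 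So the theorem will follow once the descent statement is established: the augmentation induces a weak equivalence $|\KK'(G, Z_\bullet)| \xrightarrow{\sim} \KK'(G, X)$.

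Proving this descent equivalence will be the main obstacle. I would argue by Noetherian induction on $\dim X$ (and the number of irreducible components), performed equivariantly with the equivariant localization and d\'evissage theorems of \cite{thomason}. The two geometric inputs are: (i) $f$ admits a $G$-equivariant section over a dense $G$-invariant open $U \subseteq X$ --- applying the envelope property to the irreducible components of $X$, each $G$-invariant since $G$ is connected, produces a $G$-invariant $X'_1 \subseteq Z$ mapping birationally onto $X$, and one takes $U$ to be the dense locus over which $X'_1 \to X$ is an isomorphism, which is $G$-invariant because the morphism is equivariant; and (ii) $Z \times_X W \to W$ is again an equivariant envelope for the $G$-invariant closed complement $W = X \setminus U$. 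Over $U$, the section makes the augmented simplicial scheme $Z_\bullet \times_X U \to U$ split, i.e., it acquires an extra degeneracy, so $|\KK'(G, Z_\bullet \times_X U)| \to \KK'(G, U)$ is an equivalence. Levelwise localization gives a sequence of simplicial spectra $\KK'(G, Z_\bullet \times_X W) \to \KK'(G, Z_\bullet) \to \KK'(G, Z_\bullet \times_X U)$ that is a fibration sequence in each degree, hence a fibration sequence after geometric realization; comparing it with the localization fibration sequence $\KK'(G, W) \to \KK'(G, X) \to \KK'(G, U)$, the right-hand vertical map is an equivalence by the previous sentence and the left-hand one by the inductive hypothesis applied to $W$, so the middle map is an equivalence, as desired. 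The induction terminates when $f$ is an isomorphism, in particular when $\dim X = 0$. The points needing the most care will be the homotopy coherences required to realize $\KK'(G, Z_\bullet)$ and its augmentation as honest simplicial spectra, and the invariance of the dense open $U$; both are provided by Thomason's simplicial $K$-theory machinery and the fact that the isomorphism locus of an equivariant birational morphism is $G$-invariant.
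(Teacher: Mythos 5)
Your proposal is correct and follows essentially the same route as the paper's appendix: form the \v{C}ech nerve $\cosk^X_0(Z)$, read off the cokernel of $p_{1*}-p_{2*}$ from the corner of the descent spectral sequence, and prove the descent equivalence $|\KK'(G,Z_\bullet)|\xrightarrow{\sim}\KK'(G,X)$ by equivariant Noetherian induction with localization, splitting the simplicial object where the envelope admits an equivariant section (the paper bottoms out its induction at orbits $G/H$ rather than at a dense invariant open, but this is a minor variant of the same Gillet-style argument, and the paper likewise notes that only the $0$-coskeleton case of its descent theorem is needed). You rightly flag the one real technical point -- making the levelwise pushforwards into an honest simplicial spectrum -- which the paper handles via Gillet's device of restricting to the exact subcategories $\catA^G(Z_n)$ of sheaves with vanishing higher direct images along the face maps.
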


The proof is almost entirely a repetition of Gillet's arguments.  For the convenience of the reader, we outline it here, with detailed references for the main points.

We first set up some terminology concerning simplicial schemes, and refer to \cite{bk} or \cite{conrad} for the basic facts.  A simplicial scheme is a contravariant functor from the category $\scat$ (of finite ordinals with non-decreasing maps) to the category of schemes.  The category of simplicial schemes is equipped with skeleton and coskeleton functors; we will not define these here, except to say that the $0$th coskeleton of an augmented simplicial scheme $Z_\bullet \to X$ is the simplicial scheme $\cosk^X_0(Z_\bullet)$ with $n$th term given by the $(n+1)$-fold fiber product of $Z$ over $X$.  See \cite{conrad} for the general definitions.

A group $G$ acts on a simplicial scheme $Z_\bullet$ in the evident way, by acting on all $Z_n$ equivariantly for the structure maps.  An \define{equivariant hyperenvelope} is an equivariant augmented simplicial scheme $Z_\bullet \to X$ such that $Z_0 \to X$ is an equivariant envelope, and for all $i\geq 0$, the map
\[
  Z_i \to \cosk^X_i(\sk_{i-1}Z_\bullet)
\]
is an equivariant envelope.  A \define{projective equivariant hyperenvelope} is an equivariant hyperenvelope for which all maps $Z_n \to X$ are projective.  

To define the equivariant $K$-theory of a simplicial scheme, following Gillet, consider the following condition on sheaves of $G$-equivariant $\OO_{Z_n}$-modules:
\renewcommand{\theenumi}{\fnsymbol{enumi}}
\begin{enumerate}
\item For all $\tau\colon \bm \to \bn$ in $\scat$ and all $p>0$, we have $R^p\tau_*\shfF=0$ as sheaves on $Z_m$. \label{g-cond}
\end{enumerate}
\renewcommand{\theenumi}{\arabic{enumi}}
Let $\catA^G(Z_n)$ be the full exact subcategory of $(\catCoh^G_{Z_n})$ formed by the sheaves satisfying \eqref{g-cond}.  Putting these together, we get a simplicial category $\catA^G(Z_\bullet)$.  Condition \eqref{g-cond} makes each $\tau_*\colon \catA^G(Z_n) \to \catA^G(Z_m)$ an exact functor, so we obtain another simplicial category $Q\catA^G(Z_\bullet)$ by applying Quillen's construction \cite{quillen}.  The \define{equivariant $K$-theory spectrum} of $Z_\bullet$ is the simplicial spectrum
\[
  \KK'(G,Z_\bullet) = \Omega|NQ\catA^G(Z_\bullet)|,
\]
where $N$ and $|\cdot|$ denote the nerve and geometric realization functors, respectively.  We define $K^G_q(Z_\bullet) = \pi_q(\KK'(G,Z_\bullet))$.

There is a general spectral sequence for computing with simplicial schemes; see \cite[3.14]{thomason-ss}, \cite[XII.5.7]{bk}, or \cite[15.10]{dugger}.  In our context, it takes the following form:

\begin{lemma}\label{l.ss-eqk}
Let $Z_\bullet$ be an equivariant simplicial scheme, quasi-projective over a base $Y$, with projective face maps.  Then there is a convergent spectral sequence
\[
  E^1_{p,q} = K^G_q(Z_p) \Rightarrow K^G_{p+q}(Z_\bullet).
\]
The differential $d^1\colon K^G_q(Z_p) \to K^G_q(Z_{p-1})$ is the alternating sum of the face maps.
\end{lemma}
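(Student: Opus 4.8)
The plan is to view $\KK'(G,Z_\bullet)$ as the geometric realization of the simplicial spectrum $[p]\mapsto\KK'(G,Z_p)=\Omega|NQ\catA^G(Z_p)|$. Indeed, realizing the bisimplicial set $NQ\catA^G(Z_\bullet)$ one direction at a time presents $|NQ\catA^G(Z_\bullet)|$ as the realization of the simplicial space $[p]\mapsto|NQ\catA^G(Z_p)|$, whose terms are connected, so the loop functor commutes with the outer realization. One then applies the standard homotopy spectral sequence of a simplicial spectrum $\shfE_\bullet$, for which $E^1_{p,q}=\pi_q\shfE_p\Rightarrow\pi_{p+q}|\shfE_\bullet|$ with $d^1$ the alternating sum $\sum_i(-1)^i(d_i)_*$ of the induced face maps; see \cite[3.14]{thomason-ss}, \cite[XII.5.7]{bk}, or \cite[15.10]{dugger}. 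Taking $\shfE_p=\KK'(G,Z_p)$ gives
\[
  E^1_{p,q}=\pi_q\big(\Omega|NQ\catA^G(Z_p)|\big)\ \Longrightarrow\ \pi_{p+q}\big(\Omega|NQ\catA^G(Z_\bullet)|\big)=K^G_{p+q}(Z_\bullet),
\]
with the stated differential. Since the $K$-theory spectrum of an exact category is connective, $E^1_{p,q}=0$ for $q<0$; together with $p\geq 0$ this puts the spectral sequence in the first quadrant, so it converges, each total degree receiving contributions from only finitely many entries.

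It remains to identify the $E^1$-term at $[p]$ with $K^G_q(Z_p)$, the ordinary higher equivariant $K$-theory of the single scheme $Z_p$ in the sense of \cite{thomason}; equivalently, one must show that the inclusion of the full exact subcategory $\catA^G(Z_p)\subseteq(\catCoh^G_{Z_p})$ of sheaves satisfying the acyclicity condition \eqref{g-cond} induces an equivalence on $K$-theory spectra, compatibly with the simplicial structure maps. This is the equivariant version of the corresponding comparison in \cite{gillet}, and I would prove it by transcribing Gillet's argument with equivariant bookkeeping. The subcategory $\catA^G(Z_p)$ is closed under extensions --- which follows at once from the long exact sequences for the higher direct images $R^j\tau_*$ --- and every equivariant coherent sheaf on $Z_p$ admits a finite resolution by objects of $\catA^G(Z_p)$, so Quillen's resolution theorem \cite{quillen} applies. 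The hypotheses of the lemma are what make such resolutions available: because $Z_\bullet$ is quasi-projective over $Y$ with projective face maps, $Z_p$ carries a $G$-linearized line bundle ample relative to $Y$, and equivariant Serre vanishing then shows that a sufficiently positive twist of any coherent sheaf lies in $\catA^G(Z_p)$ --- in the situations where the lemma is applied, only finitely many structure maps out of $Z_p$ impose a nonvacuous instance of \eqref{g-cond}, the remaining ones being closed immersions --- exactly as in the non-equivariant case.

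I expect this identification of the $E^1$-page to be the only real point: everything preceding it is a formal consequence of the general spectral sequence for simplicial spectra, whereas producing enough sheaves satisfying \eqref{g-cond} and bounding the length of the resolutions is where the geometric hypotheses --- quasi-projectivity over $Y$ and projectivity of the face maps --- are used, just as in \cite{gillet}. Since the equivariant refinements required (existence of $G$-linearized relatively ample line bundles, equivariant Serre vanishing) are standard for an algebraic group acting on a finite-type scheme, this step introduces no idea beyond what is already in \cite{gillet}.
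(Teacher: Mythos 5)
Your proposal is correct and follows the same route the paper takes: the paper offers no argument beyond citing \cite[3.14]{thomason-ss}, \cite[XII.5.7]{bk}, and \cite[15.10]{dugger} for the homotopy spectral sequence of the simplicial spectrum $p\mapsto\KK'(G,Z_p)$, which is exactly your first paragraph. Your identification of the $E^1$-page --- showing via (the dual, coresolution form of) Quillen's resolution theorem, equivariantly linearized relatively ample bundles, and Serre vanishing that $K_q(\catA^G(Z_p))\cong K^G_q(Z_p)$ --- is the step the paper leaves implicit in its appeal to Gillet, and you have correctly located both where the geometric hypotheses enter and why the remaining structure maps (degeneracies being closed immersions) impose no condition.
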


We also need an equivariant version of one of the main theorems of \cite{gillet}.

\begin{theorem}\label{t.gillet-main}
Let $p\colon Z_\bullet \to X$ be a projective equivariant hyperenvelope.  Then $p_*\colon K^G_q(Z_\bullet) \to K^G_q(X)$ is an isomorphism for all $q$.
\end{theorem}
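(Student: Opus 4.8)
The plan is to follow Gillet's proof of the non-equivariant statement \cite{gillet} essentially line by line, making three substitutions throughout: Thomason's equivariant localization theorem \cite{thomason} in place of the ordinary localization sequence, the notions of equivariant envelope and equivariant hyperenvelope in place of their non-equivariant counterparts, and the equivariant simplicial spectral sequence of Lemma~\ref{l.ss-eqk} in place of the simplicial spectral sequence used by Gillet. One proves, by Noetherian induction on $X$, the slightly stronger assertion that $p_*\colon K^G_q(Z_\bullet) \to K^G_q(X)$ is an isomorphism for \emph{every} projective equivariant hyperenvelope $p\colon Z_\bullet \to X$; the base case $X=\emptyset$ is vacuous.

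For the inductive step, first observe that since $Z_0 \to X$ is an equivariant envelope, each irreducible component of $X$ --- these are $G$-invariant because $G$ is connected --- is dominated birationally by an invariant subvariety of $Z_0$. Intersecting the dense open loci on which these birational maps are isomorphisms produces a dense $G$-invariant open $U \subseteq X$ over which $Z_0 \to X$ admits an equivariant section; set $X_1 = X \setminus U$, a proper closed $G$-invariant subscheme. Restricting to a closed, resp. open, $G$-invariant subscheme takes a projective equivariant hyperenvelope to one of the same kind, since envelopes restrict to envelopes and the coskeleton construction commutes with base change; in particular $Z_\bullet|_{X_1} \to X_1$ is a projective equivariant hyperenvelope, and the inductive hypothesis applies to it.

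Next I would invoke Thomason's equivariant localization theorem in each simplicial degree: for every $n$ there is a homotopy fibration $\KK'(G,Z_n|_{X_1}) \to \KK'(G,Z_n) \to \KK'(G,Z_n|_U)$, natural in $n$, and these assemble into a homotopy fibration of simplicial spectra whose geometric realization is again a fibration; the same applies to the constant simplicial scheme $X$. Taking homotopy groups and identifying them via Lemma~\ref{l.ss-eqk} yields a morphism of long exact sequences comparing $K^G_*(Z_\bullet|_{X_1}) \to K^G_*(Z_\bullet) \to K^G_*(Z_\bullet|_U)$ with $K^G_*(X_1) \to K^G_*(X) \to K^G_*(U)$. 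Since the inductive hypothesis handles the $X_1$ terms, the five lemma reduces everything to proving the theorem over $U$, that is, for a projective equivariant hyperenvelope $Z_\bullet|_U \to U$ whose bottom map $Z_0|_U \to U$ has an equivariant section.

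This split case is the crux, and is handled exactly as in \cite{gillet}: an equivariant section of $Z_0|_U \to U$ yields a system of extra degeneracies on $\cosk^U_0(Z_0|_U)$, so that its augmentation to $U$ is a simplicial homotopy equivalence and hence a $\KK'$-equivalence, while a secondary induction on the simplicial skeleton --- using, at each bounded stage, the envelope conditions on the higher terms $Z_n|_U$ together with the localization fibrations and the spectral sequence of Lemma~\ref{l.ss-eqk} --- identifies $\KK'(G, Z_\bullet|_U)$ with $\KK'(G, \cosk^U_0(Z_0|_U))$. I expect this split case, and in particular keeping the two nested inductions (on $X$ and on the simplicial degree) coherent with the equivariant localization sequences and spectral sequences, to be the main obstacle. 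Everything else is a routine equivariant upgrade of Gillet's argument, since the needed tools --- Thomason's equivariant $K$-theory spectra, equivariant localization, and proper pushforward along the projective structure maps --- are all available and sufficiently functorial for an arbitrary connected group.
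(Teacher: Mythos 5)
Your overall strategy --- rerun Gillet's argument with equivariant envelopes, Thomason's localization, and the spectral sequence of Lemma~\ref{l.ss-eqk} --- is the right one, and your reduction of the $0$-coskeleton case to the split case over a dense invariant open $U$ (extra degeneracies from an equivariant section there, localization plus the five lemma for the closed complement) is sound. The gap is in how you have nested the two inductions. The ``secondary induction on the simplicial skeleton'' compares $Z_\bullet$ with the tower $Z_\bullet[i]=\cosk^X_i(\sk_i Z_\bullet)$, and the passage from $Z_\bullet[i-1]$ to $Z_\bullet[i]$ is governed by the envelope $Z_i \to \bigl(\cosk^X_{i-1}(\sk_{i-1}Z_\bullet)\bigr)_i$: one must apply the $0$-coskeleton descent statement to this envelope \emph{over the base} $\bigl(\cosk^X_{i-1}(\sk_{i-1}Z_\bullet)\bigr)_i$ (via the usual bisimplicial comparison). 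These bases are projective over $X$ but are neither closed invariant subschemes of $X$ nor schemes over which your section of $Z_0|_U$ provides any splitting, so they are covered neither by your outer Noetherian induction on $X$ nor by your split case over $U$. As written, the innermost step of your argument invokes the very statement being proved, over new and typically higher-dimensional bases, so the induction is not well founded.

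The repair is to decouple the inductions, which is exactly how the paper's appendix organizes the proof. First establish the $0$-coskeleton case $\cosk^W_0(Z)\to W$ for an equivariant envelope $Z\to W$ over an \emph{arbitrary} base $W$, by a self-contained Noetherian induction on $W$; this is where your localization-plus-section argument lives. (The paper anchors this induction at orbits $W=G/H$, where an equivariant envelope automatically admits a global equivariant section because a proper birational equivariant map onto a homogeneous space is an isomorphism; your variant, with base case $\emptyset$ and a dense invariant open carrying a section, is an acceptable substitute.) Only once this is available over every base does one run the skeletal induction on $i$ for a general hyperenvelope, freely applying the $0$-coskeleton result to the envelopes $Z_i \to \bigl(\cosk^X_{i-1}(\sk_{i-1}Z_\bullet)\bigr)_i$. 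With that reorganization your argument coincides with the paper's.
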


\begin{proof}[Sketch of proof]
The proof follows Gillet's argument, proceeding in three steps.

{\bf First step:} Given a projective equivariant envelope $Z\to X$, take $Z_\bullet = \cosk^X_0(Z)$, so $Z_n$ is the $(n+1)$-fold fiber product of $Z$ over $X$.  Also assume $X=G/H$ is a homogeneous space defined over a field $F$, so $H\subseteq G$ is a closed subgroup defined over $F$.

Since $p\colon Z \to X=G/H$ is an equivariant envelope, there is an invariant subvariety $\tilde{X}\subseteq Z$ mapping birationally and equivariantly to $X$.  Such a map $\tilde{X} \to X$ is an isomorphism, so there is an equivariant section $X \to Z$.  This extends to an equivariant section $s\colon X \to Z_\bullet$ (regarding $X$ as a constant simplicial scheme).  Since $Z_\bullet$ is a $0$-coskeleton, the maps $s$ and $p$ are homotopy-inverses (\cite[Lemma~5.7]{conrad}), so the induced map of simplicial groups $K^G_q(Z_n) \to K^G_q(X)$ is a homotopy equivalence.  Now it follows from the spectral sequence of Lemma~\ref{l.ss-eqk} that $K^G_q(Z_\bullet) \to K^G_q(X)$ is an isomorphism.

{\bf Second step:} Continue to assume $Z_\bullet = \cosk^X_0(Z)$ is a $0$-coskeleton, but now allow $X$ to be arbitrary.

The argument for this step proceeds exactly as in \cite{gillet}, except that the noetherian induction is taken over $G$-invariant closed subschemes $Y\subseteq X$.  The base case is when $Y=G/H$ is an orbit, which is taken care of by the first step.

{\bf Third step:} The general case stated in the theorem.

Here we follow \cite{gillet} verbatim: Let $Z_\bullet[i] = \cosk^X_i(\sk_i Z_\bullet)$, and use induction on $i$, starting from the base case $i=0$, which is the situation of the second step.
\end{proof}

Theorem~\ref{t.gillet} now follows easily:

\begin{proof}[Proof of Theorem~\ref{t.gillet}]
Look at the last terms in the spectral sequence of Lemma~\ref{l.ss-eqk}.  We have
\[
  E^2_{0,0} = K^G_\circ(Z_0)/\mathrm{im}(K^G_\circ(Z_1) \to K^G_\circ(Z_0)).
\]
Apply this to the hyperenvelope $Z_\bullet \to X$, with $Z_\bullet = \cosk^X_0(Z)$, so $Z_0=Z$ and $Z_1 = Z \times_X Z$.  Together with the isomorphism of Theorem~\ref{t.gillet-main}, this gives the exact sequence of Theorem~\ref{t.gillet}.  (In fact, we only needed the first two steps in the proof of Theorem~\ref{t.gillet-main}.)
\end{proof}



\end{document}